\documentclass{article}

\usepackage[ruled,longend,linesnumbered]{algorithm2e}

\usepackage{geometry}
\newgeometry{vmargin={24mm}, hmargin={33mm,53mm}}

\usepackage{marginnote}
\usepackage{lipsum}
\usepackage{amsfonts}
\usepackage{graphicx}
\usepackage{epstopdf}
\usepackage{algorithmic}
\usepackage{stmaryrd}
\usepackage{multirow}
\usepackage{amsmath}
\usepackage{amssymb}
\usepackage{mathtools}
\usepackage{pdfpages}
\usepackage{bbm}
\usepackage{diagbox}
\usepackage{amsbsy}
\usepackage{xcolor}
\usepackage{capt-of}
\usepackage{soul}
\usepackage{todonotes}
\usepackage{xcolor}
\usepackage{esint}
\usepackage{tikz}
\usetikzlibrary{calc}
\usetikzlibrary{calc,patterns}
\usepackage{ifthen}
\usepackage{pgfplots} 
\usepackage[utf8]{inputenc}
\usepackage{algorithmic}
\pgfplotsset{compat=1.16} 

\definecolor{olive}{HTML}{bcbd22}
\definecolor{blue}{HTML}{1f77b4}
\definecolor{green}{HTML}{2ca02c}
\definecolor{red}{HTML}{d62728}
\definecolor{purple}{HTML}{9467bd}

\DeclareMathOperator*{\prox}{prox}
\DeclareMathOperator*{\argmin}{arg\,min}
\newcommand{\norm}[1]{\left\lVert#1\right\rVert}

\newcommand{\x}{\mathbf{x}}
\newcommand{\dx}{\mathrm{d}}
\newcommand{\Vh}{\mathcal{V}_h}
\newcommand{\Yh}{\mathcal{Y}_h}

\newcommand{\R}{\mathbb{R}}
\newcommand{\T}{\mathbb{T}}

\newcommand{\D}{\mathbf{D}}

\newcommand{\Qh}{\mathcal{Q}_h}
\newcommand{\WWh}{\mathcal{W}_h}
\newcommand{\Ycal}{\mathcal{Y}}

\newcommand{\dxS}{\mathrm{d}\Pi(\mathbf{x},t)}
\newcommand{\dxSh}{\mathrm{d}\Pi_h(\mathbf{x},t)}
\newcommand{\gradsurf}{\nabla_{\Gamma_H}}

\newcommand{\Qcal}{\mathcal{Q}}
\newcommand{\Vcal}{\mathcal{V}}

\usepackage[normalem]{ulem}
\newcommand{\soutr}{\bgroup\markoverwith{\textcolor{red}{\rule[0.5ex]{2pt}{1.2pt}}}\ULon}

\ifpdf
  \DeclareGraphicsExtensions{.eps,.pdf,.png,.jpg}
\else
  \DeclareGraphicsExtensions{.eps}
\fi

\usepackage{scalerel}[2014/03/10]
\usepackage[usestackEOL]{stackengine}
\def\avint{\,\ThisStyle{\ensurestackMath{%
    \stackinset{c}{0\LMpt}{c}{0\LMpt}{\SavedStyle-}{\SavedStyle\phantom{\int}}}%
    \setbox0=\hbox{$\SavedStyle\int\,$}\kern-\wd0}\int}

\usepackage{amsthm}

\newtheorem{theorem}{Theorem}
\newtheorem{lemma}{Lemma}     
\newtheorem{proposition}{Proposition}  
\newtheorem{definition}{Definition} 

\newtheorem{remark}{Remark}

\usepackage{hyperref}
\usepackage{cleveref}

\title{Learned Finite Element-based Regularization of the Inverse Problem in Electrocardiographic Imaging}
\author{Manuel Haas\footnotemark[2]
\and Thomas Grandits\footnotemark[3]
\and Thomas Pinetz\footnotemark[4]
\and Thomas Beiert\footnotemark[5]
\and Simone Pezzuto\footnotemark[6]
\and Alexander Effland\footnotemark[2]}

\begin{document}
	
\maketitle

\renewcommand{\thefootnote}{\fnsymbol{footnote}}

\footnotetext[0]{\url{https://github.com/mnlhaas/ECGI_FEM}}
\footnotetext[1]{This work was supported by the Deutsche Forschungsgemeinschaft (DFG, German
	Research Foundation), EXC2151-390873048, EXC-2047/1-390685813, and -- CRC 1720 -- 539309657. SP and TG are also supported by the SNSF project ``CardioTwin'' (no.~214817). SP acknowledges the support of the CSCS-Swiss National Supercomputing Centre project no.~lp100 and the PRIN-PNRR project no.~P2022N5ZNP. SP is member of INdAM-GNCS.}
\footnotetext[2]{Institute for Applied Mathematics, University of Bonn, Germany}
\footnotetext[3]{Department of Mathematics and Scientific Computing, University of Graz, Austria}
\footnotetext[4]{ Institute of Artificial Intelligence, Medical University of Vienna,
	Austria}
\footnotetext[5]{ Heart Center Bonn, Department of Internal Medicine II, University Hospital Bonn, Germany}
\footnotetext[6]{ Department of Mathematics, University of Trento, Italy}

\renewcommand{\thefootnote}{\arabic{footnote}}

\begin{abstract}
    Electrocardiographic imaging (ECGI) seeks to reconstruct cardiac electrical activity from body-surface potentials noninvasively. 
However, the associated inverse problem is severely ill-posed and requires robust regularization. 
While classical approaches primarily employ spatial smoothing, the temporal structure of cardiac dynamics remains underexploited despite its physiological relevance. 
We introduce a space–time regularization framework that couples spatial regularization with a learned temporal Fields-of-Experts (FoE) prior to capture complex spatiotemporal activation patterns.
We derive a finite element discretization on unstructured cardiac surface meshes, prove Mosco-convergence, and develop a scalable optimization algorithm capable of handling the FoE term. 
Numerical experiments on synthetic epicardial data demonstrate improved denoising and inverse reconstructions compared to handcrafted spatiotemporal methods, yielding solutions that are both robust to noise and physiologically plausible.
\end{abstract}

%% main text
\section{Introduction}
The heart is an electromechanical pump essential to sustaining life. Abnormal electrical activation may lead to life-threatening arrhythmias, which must be promptly diagnosed to prevent further damage. 
In electrocardiography, electrical potentials recorded on the chest surface are linked to the underlying cardiac electrical activity and can therefore provide insight into arrhythmic mechanisms and guide therapeutic decisions~\cite{Pe22}. 
From a mathematical perspective, reconstructing cardiac potentials from body-surface measurements leads to the inverse problem of electrocardiography~\cite{Cl18, Fr14}. This inverse problem is severely ill-posed, and its difficulty is further exacerbated by noise and measurement artifacts arising from imperfect electrode contact and anatomical constraints~\cite{Be07}. Consequently, the choice of regularization is critical for obtaining physiologically meaningful reconstructions.

A broad range of regularization techniques has been investigated to stabilize the reconstructions~\cite{Ka18}, ranging from classical Tikhonov methods~\cite{Ti77} to total variation~\cite{Ru92} approaches. 
While spatial regularization has been widely studied, the temporal dimension in medical imaging remains underutilized, despite its importance for capturing complex spatiotemporal activation patterns, such as the wave-like propagation observed in cardiac electrical potentials~\cite{Ha25, On09}. 
Deep learning techniques demonstrate good accuracy in inverse problem reconstruction~\cite{Te22}, and optimization-based approaches in machine learning frameworks have been explored for patient-specific cardiac modeling~\cite{Gr24}, but provide limited interpretability.

Recently, machine-learned priors such as Fields-of-Experts (FoE)~\cite{Du25, Go24, Ro05} and variational deep regularizers~\cite{Ko22} have demonstrated strong performance in modeling higher-order dependencies in image reconstruction tasks and retain interpretability, yet their integration with PDE-constrained inverse problems in finite element frameworks remains largely unexplored.
In this work, we propose a novel class of learned finite element regularizers that combine spatial total variation–type terms with a data-driven temporal FoE prior. 
Both the cardiac inverse problem and the proposed regularization are discretized within a finite element framework~\cite{Br08,Ci78,Se05,Wa10}.

Beyond modeling, our main theoretical contribution is the first Mosco-convergence analysis for inverse problems regularized by learned FoE-type energies in finite element spaces, proving convergence of minimizers from the discrete to the continuous problem on realistic domains~\cite{Ef20, Mo69, Na24}.  
Numerical experiments demonstrate that the proposed learning–analysis framework produces reconstructions that are both more accurate and more physiologically plausible than those obtained by classical non-learning-based methods.
The paper is structured as follows: we first introduce a variational model with learned spatiotemporal regularization, then present its finite element discretization on unstructured cardiac meshes, establish Mosco-convergence and convergence of minimizers, develop a scalable convergent optimization algorithm, and finally validate the approach with numerical experiments on synthetic 2D data.

\section{Methods}
We begin by introducing the forward and inverse problems in electrocardiographic imaging (ECGI), along with the FoE-based regularization.

\subsection{Forward Problem}
The forward problem consists of computing body-surface potentials resulting from a given epicardial potential, obtained by transferring the epicardial potential through the torso volume conductor via a linear forward operator.
Let $\Omega\subset \mathbb{R}^d$ with $d\in\{2,3\}$ denote the torso domain with outer boundary $\Gamma = \partial \Omega$. 
The body--surface electrodes are represented by a subset $\Sigma \subset \Gamma$, consisting of a disjoint union of $N_\Sigma$ open and bounded patches $\Sigma = \bigcup_{i=1}^{N_\Sigma} \Sigma_i$ for each electrode, where measurements $z$ are available.
The epicardial surface of the heart is denoted by $\Gamma_H = \partial \Omega_H$, where $\Omega_H \subset \Omega$ is the heart domain. 
We define the torso domain excluding the heart as $\Omega_0 = \Omega \setminus \overline{\Omega}_H$.
Throughout, we assume that the domain $\Omega_0$ possesses a $C^2$ boundary, and that both $\Gamma_H$ and $\Gamma$ are closed and compact disjunct boundary manifolds.
For a time interval $\T\coloneqq(0,\widetilde{t})$ with $\widetilde{t}>0$ and the epicardium potential function $u\in L^2(\T;H^{1/2}(\Gamma_H))$, the well-posed elliptic forward problem is defined for almost every $t\in \T$
\begin{equation}
\label{eq:E_forward}
\begin{cases}
\begin{aligned}
-\operatorname{div}\left(\sigma(\x)\nabla_{\x}v(\x,t)\right) &= 0,  &&\mathbf{x}\in \Omega_0, \\
\sigma(\x)\nabla_{\x}v(\x,t)\cdot\mathbf{n} &= 0,  &&\mathbf{x}\in \Gamma,  \\
v(\x,t) &= u(\x,t),  &&\mathbf{x}\in \Gamma_H,
\end{aligned}
\end{cases}
\end{equation}
for $\mathbf{n}$ the outward unit normal vector and $\sigma\in L^\infty(\Omega,\mathbb{R}_+)$ the torso conductivity tensor or bulk conductivity.
Here, we assume the space-dependent matrix $\sigma$ to be symmetric satisfying the ellipticity condition~$\zeta^{-1}|\mathbf{y}|^2\leq \sigma(\x)\mathbf{y}\cdot \mathbf{y}\leq \zeta|\mathbf{y}|^2$ for some $\zeta > 0$ and all $\mathbf{y} \in \R^d$.
The existence, uniqueness, and regularity of a solution follow~\cite{Ev10}.
\begin{theorem}
There exists a unique solution $v\in L^2(\T;H^1(\Omega_0))$ of \eqref{eq:E_forward} for $u\in L^2(\T;H^{1/2}(\Gamma_H))$ in a weak sense.
\begin{proof}
For a.e.\ $t\in\T$, the surjectivity of the trace operator from $H^1(\Omega_0)$ to $H^{1/2}(\Gamma_H)$ yields a function $g(\cdot,t)\in H^1(\Omega_0)$ with $g|_{\Gamma_H}=u(\cdot,t)$ and 
\[
\|g(\cdot,t)\|_{H^1(\Omega_0)}\le C_E \|u(\cdot,t)\|_{H^{1/2}(\Gamma_H)}.
\]
Writing $v(\cdot,t)=g(\cdot,t)+w(\cdot,t)$ with $w(\cdot,t)\in H_0^1(\Omega_0)$ reduces \eqref{eq:E_forward} to homogeneous boundary conditions.  
Defining $a(\phi,\varphi):=\int_{\Omega_0}\sigma\nabla_\x\phi\cdot\nabla_\x\varphi\,\mathrm d\x$ with $\phi,\varphi\in H^1_0(\Omega_0)$, which is continuous and coercive, the Lax--Milgram theorem gives for a.e.\ $t\in\T$ a unique $w(\cdot,t)\in H_0^1(\Omega_0)$ solving
\begin{equation}\label{eq:spatial_lifting}
a(w(\cdot,t),\varphi) = -a(g(\cdot,t),\varphi) \quad \forall \varphi\in H_0^1(\Omega_0),
\end{equation}
with $\|w(\cdot,t)\|_{H^1(\Omega_0)} \leq C\|g(\cdot,t)\|_{H^1(\Omega_0)}$. 
Hence $v(\cdot,t)$ satisfies the weak formulation of \eqref{eq:E_forward} and $\|v(\cdot,t)\|_{H^1(\Omega_0)} \le C\|u(\cdot,t)\|_{H^{1/2}(\Gamma_H)}$. Integration over $\T$ yields the claim, and uniqueness follows from coercivity.
\end{proof}
\end{theorem}
Let $v_{u}(\cdot,t)$ be the weak solution associated with the boundary data $u(\cdot,t)$ for a.e. $t\in \T$.
We define the operator
\[
A_{\T} : H^{1/2}(\Gamma_H) \to H^{1/2}(\Gamma),
\quad \text{with} \quad
A_{\T}[u(\cdot,t)] \coloneqq v_u(\cdot,t)|_{\Gamma}
\] 
as the solution operator concatenated with the trace operator for the torso boundary.
Therefore, the forward operator is a bounded linear operator, and since $u(\cdot,t):\T\to H^{1/2}(\Gamma_H)$ is strongly measurable, the composition $A_{\T}[u(\cdot,t)]$ is strongly measurable (because $H^{1/2}(\Gamma)$ is separable), ensuring boundedness by
\begin{equation*}
    \norm{A_{\T}[u(\cdot, t)]}_{H^{1/2}(\Gamma)}\leq C\norm{v(\cdot, t)}_{H^1(\Omega_0)}\leq \widetilde{C}\norm{u(\cdot, t)}_{H^{1/2}(\Gamma_H)}.
\end{equation*}
By integrating over time, we define the linear forward operator
\[
A:L^2(\T; H^{1/2}(\Gamma_H))\to L^2(\T; H^{1/2}(\Gamma)), \quad u\mapsto v_u|_\Gamma.
\]
Note that the inverse of the forward operator is, in general, unbounded.

\subsection{Inverse Problem}
For the inverse problem in electrocardiography, we compare to a time series of measurements $z\in (L^2(\T))^{N_\Sigma}$ for each electrode $i=1,\ldots,N_\Sigma$ defined by the average integral $z_i(t)=\avint_{\Sigma_i} \widetilde{z}(\x,t) \dx S^\Gamma(\x)$ of a potential function $\widetilde{z}\in L^2(\Sigma\times \T)$ with the $d-1$-dimensional surface measure $S^\Gamma$ of $\Gamma$.
Since the measurements are defined electrode-wise as scalars, we minimize the cost functional 
\begin{equation*}
\argmin_{u\in L^2(\T;H^{1/2}(\Gamma_H))} 
\left\{
G(u,z)\coloneqq\frac{1}{2 N_\Sigma}\sum_{i=1}^{N_\Sigma}\int_\T\left(\avint_{\Sigma_i}A[u](\x,t)\dx  S^\Gamma(\x) -z_i(t)\right)^2 \dx t
\right\}.
\end{equation*}
Due to the ill-posedness of the problem, we follow the standard approach in inverse problem theory and introduce a regularization term. 
The inverse problem is formulated as the minimization of an energy functional 
consisting of a data fidelity term and a regularization term. 
For a given measurement $z\in (L^2(\T))^{N_\Sigma}$, we seek an epicardial potential $u$ that minimizes
\begin{equation}\label{eq:energy}
    \argmin_{u \in \mathcal{V}} 
    \left\{\mathcal{J}(u, z) \coloneqq G(u, z) + R_\theta(u)\right\},
\end{equation}
over the space $\mathcal{V} \coloneqq L^2(\T; H^1(\Gamma_H))$.
The choice of $\Vcal$ as space of solutions follows from the continous embedding $H^1(\Gamma_H) \hookrightarrow H^{1/2}(\Gamma_H)$ on compact $C^2$ manifolds, hence  $\mathcal{V} \subset L^2(\T; H^{1/2}(\Gamma_H))$.
The norm of the Bochner space $\Vcal$ is defined as
\begin{equation*}
\norm{u}_\Vcal = \left(\int_\T \norm{u(\cdot, t)}_{H^1(\Gamma_H)}^2\dx t\right)^{\frac{1}{2}}=\left(\int_{\Gamma_H\times \T} (u(\x,t))^2 + |\gradsurf u(\x,t)|^2 \dxS\right)^{\frac{1}{2}}
\end{equation*}
with surface gradient $\gradsurf$ for any $v:\Gamma_H \to \mathbb{R}$ and $\dxS\coloneqq \dx S(\x)\otimes \dx \lambda(t)$ with $S$ the $(d-1)$-dimensional surface measure of $\Gamma_H$ and $\lambda$ the $1$-dimensional Lebesgue measure.
Moreover, we define the space $\Ycal\coloneqq L^2(\Gamma_H\times \T)$ to allow for more convenient notation of norms.

\subsection{Spatiotemporal Regularizer}
Standard regularization techniques such as Tikhonov~\cite{Ti77} and total variation~\cite{Ru92} are general-purpose approaches that effectively mitigate measurement noise. Here, we aim to design a problem-specific regularizer that embeds data-driven knowledge of the underlying cardiac dynamics while remaining mathematically well-posed and interpretable.

To this end, we adopt the FoE framework, originally developed for imaging, in which, instead of prescribing a fixed penalty, one learns a collection of experts~\cite{Du25,Go24,Ro05}. Each expert \( i = 1,\dots,N_C \) is defined by a bounded linear operator \( L_i \) acting on the epicardial potential and an associated nonlinear potential function \( \phi_i \). The operator response \( L_i[u] \) is penalized pointwise in space and time via \( \phi_i \). Overall, the regularizer in a continuous spatiotemporal formulation reads as
\begin{equation}\label{eq:FoE}
    R_\theta(u) 
    = \sum_{i=1}^{N_C} 
    \int_{\Gamma_H \times \T} 
    \phi_i\left(L_i[u](\x,t)\right)\dxS.
\end{equation}
In canonical FoE, the operators $L_i$ are chosen as convolutions with learned kernels, mirroring convolutional layers in neural networks.
However, spatiotemporal convolutions on unstructured finite element grids are computationally expensive due to their irregular structure, whereas temporal convolutions on a typically uniform time grid are straightforward to compute.
We therefore propose a multivariate operator that separates spatial differentiation from temporal filtering while still allowing joint, learned coupling of their responses. 
In our formulation, the operators $L_i$ consist of three components: a zero-order term $\epsilon_\theta u$ ensuring stability, the surface gradient $\gradsurf u$ imposing spatial regularity, and a temporally filtered component $K_i[u]$ such that
\begin{equation}\label{eq:lin_op}
    L_i[u](\x,t)\in\mathbb{R}\times\mathbb{R}^{d}\times\mathbb{R} \quad  \text{with} \quad L_i[u]\coloneqq (\epsilon_\theta u, \gradsurf u, K_i[u]),
\end{equation}
where $\epsilon_\theta>0$ is a learned scalar and $K_i$ is realised as a bounded temporal cross correlation with a 1D kernel $k_i\in L^1(\widetilde{\T})$ with $\widetilde{\T}=(-\omega/2,\omega/2)$ for $\omega>0$ applied to the zero-extended signal $\widetilde{u}$
\begin{equation*}
K_i[u]\coloneqq(k_i \ast_\T u)(\x,t) 
\coloneqq 
\mathbf{1}_\T(t) 
\int_{\widetilde{\T}} 
k_i(\tau)\, \widetilde{u}(\x,\tau+t) \, \mathrm{d}\tau, \quad \widetilde{u}=u \; \text{on} \; \T, \;\widetilde{u}=0 \;\text{otherwise}.
\end{equation*} 
We adapt the multivariate setting introduced in~\cite{Du25} such that the corresponding potentials $\phi_i : \mathbb{R}^{d+2} \to \mathbb{R}$ penalize the joint response of the components in~\eqref{eq:lin_op}.
Hence,~\eqref{eq:FoE} becomes
\begin{equation}\label{eq:FoE_multivariate}
    R_\theta(u) 
    = \lambda_\theta \sum_{i=1}^{N_C} 
    \int_{\Gamma_H \times \T} 
    \phi_i\left(\epsilon_\theta u(\x,t), \gradsurf u(\x,t), (k_i \ast_\T u)(\x,t)\right)\dxS,
\end{equation}
with weighting parameter $\lambda_\theta > 0$.
The filters $k_i$ and parameters of $\phi_i$ are learned from data, jointly with $\epsilon_\theta$ and $\lambda_\theta$; the spatial operator $\gradsurf$ is fixed by the geometry.
Following~\cite{Du25}, to construct flexible and well-behaved multivariate potentials, we rely on the concept of the Moreau envelope~\cite{Mo65}. 
Given a proper, lower semi-continuous, convex function $f:\mathbb{R}^{d+2} \to \mathbb{R}$ and a parameter $\mu>0$, its Moreau envelope is defined as $ M_\mu f(\mathbf{x}) = \inf_{\mathbf{z}\in\mathbb{R}^{d+2}} \left\{ f(\mathbf{z}) + \frac{1}{2\mu} \|\mathbf{z}-\mathbf{x}\|_{2}^2 \right\}$.
The Moreau envelope can be viewed as a smooth approximation of $f$ that preserves key properties of the original function while ensuring differentiability and well-behaved gradients. In particular, its gradient is directly related to the proximal operator of $f$, $\nabla M_\mu f(\mathbf{x}) = \frac{1}{\mu} \left( \mathbf{x} - \mathrm{prox}_{\mu f}(\mathbf{x}) \right)$, which is non-expansive, implying Lipschitz continuity of $\nabla M_\mu f$, and computationally efficient to evaluate for many choices of $f$. These properties make the Moreau envelope particularly suitable for designing potentials in optimization-based inverse problems, where stable and efficient gradient computation is crucial.
Building on this idea, we define our multivariate potential functions as differences of Moreau envelopes of the $\ell^\infty$ norm, combined with linear transformations and a zero-order Tikhonov term. For each operator $L_i$, the potential function is
\begin{equation}\label{eq:potential_function}
    \phi_i(\mathbf{y}) = \mu_i \,\omega_{\mu_i}^{d+2}(\mathbf{y}) 
    - \mu_i \,\omega_{\eta_i \mu_i}^{d+2}(\mathbf{Q}_i \mathbf{y}),
\end{equation}
with learnable matrices $\mathbf{Q}_i \in \mathbb{R}^{(d+2)\times(d+2)}$ and scalars $\eta_i$, where
\begin{equation*}
\omega_\mu^{d+2}(\mathbf{y})
= \left\| \mathbf{y} - \mu\,\mathrm{Proj}_{B_{\ell^1}}\!\left(\mathbf{y}/\mu\right) \right\|_{\infty}
+ \frac{\mu}{2}\left\| \mathrm{Proj}_{B_{\ell^1}}\!\left(\mathbf{y}/\mu\right) \right\|_2^2
+ \frac{\epsilon_\omega}{2} \left\| \mathbf{y}/\mu \right\|_2^2,
\end{equation*}
and
$\mathrm{prox}_{\iota_{B_{\ell^1}}}(\mathbf{y}) = \mathrm{Proj}_{B_{\ell^1}}(\mathbf{y})  = \argmin_{\mathbf{z}\in B_{\ell^1}}\{\norm{\mathbf{z}-\mathbf{y}}_2^2\},
$
the projection onto the $\ell^1$-ball in $\mathbb{R}^{d+2}$ for indicator function of the $\ell^1$-ball $\iota_{B_{\ell^1}}$.  
This construction ensures several desirable properties: it favors weak responses while increasingly penalizing strong ones, captures correlations across multiple components via $\mathbf{Q}_i$, and remains smooth and coercive due to the Moreau envelope and zero-order term. 
The derivative of $\omega_\mu^{d+2}$,
$\nabla \omega_\mu^{d+2}(\mathbf{y}) = \mathrm{Proj}_{B_{\ell^1}}(\mathbf{y}/\mu) + \epsilon_\omega \mathbf{y}/\mu$,
is computationally efficient and non-expansive, ensuring stability in gradient-based optimization.

According to \cite[Theorem 1]{Du25}, the conditions $\|\mathbf{Q}_i\|_\infty \le 1$ and $\eta_i > \|\mathbf{Q}_i\|_2^2$ guarantee that the potential $\phi_i$ is nonnegative and attains a unique global minimum at the origin, which carries over naturally to our framework and ensures that the learned regularizer is bounded from below while favoring weak operator responses. 
We also note that the regularization functional $R_\theta$ is differentiable, allowing for efficient gradient-based optimization.
However, due to the negative part in~\eqref{eq:potential_function}, it is generally non-convex, reflecting the trade-off between expressivity and convexity.

\section{Finite Element Discretization}
In this section, we introduce the discrete domains, function spaces, and operators used throughout this work, constructed via the finite element method (FEM).

\subsection{Space-Time Discretization}
We define $\mathcal{T}_h$ with largest element diameter $h>0$ as the collection of surface elements discretizing $\Gamma_H$ and, with a slight misuse of notation, the shape-regular adaptive conforming surface mesh approximating $\Gamma_H$.
Furthermore, we assume that the elements in $\mathcal{T}_h$ are restricted to vertices lying on $\Gamma_H$.
Analogously, we discretize $\Gamma$ and consistent as subsets $\Sigma_i$ by 
$\mathcal{T}_h^{\Gamma}$ and $\mathcal{T}_h^{\Sigma_i}$ for $i = 1, \ldots, N_\Sigma$.
The torso domain $\Omega_0$ is  discretized by a mesh
$\mathcal{U}_h$, which is defined as a shape-regular volumetric discretization satisfying $\mathcal{T}_h\cup \mathcal{T}_h^{\Gamma}=\partial \mathcal{U}_h$ either consisting of triangles ($d=2)$ or tetrahedra ($d=3$).

The temporal interval $\T$ is discretized as a uniform temporal grid $\mathcal{S}_h$. 
In particular, we assume that temporal measurements are available at $N_\T+1$ equidistant time points $t_s=s\delta$ for $s=0,\ldots,N_\T$
which define $N_\T$ time intervals of size $\delta > 0$, with $\delta = O(h)$.

\subsection{Function Spaces}
We approximate the continuous space-time function space $\mathcal{V}$ by the discrete space $\mathcal{V}_h$ using $P_1$ finite elements in space on the adaptive surface mesh $\mathcal{T}_h$ and in time on the temporal grid $\mathcal{S}_h$. 
The spatial mesh is identical at each time step and consists of affine elements of dimension $d$. 
Let $\pmb{\Xi} \coloneqq \{\pmb{\xi}_i\}_{i=1}^{N_{\mathcal{V}}}\subset \Gamma_H$ denote the set of nodes associated with 
$\mathcal{T}_h$. 
The tensor product of a spatial element $J \in \mathcal{T}_h$ and a temporal interval $Z \in \mathcal{S}_h$ 
defines a space-time element $J \otimes Z \in \mathcal{T}_h \otimes \mathcal{S}_h$.
The resulting discrete space-time function space is
\begin{equation*}
\Vh \coloneqq \Big\{ v \in L^2(\T;H^1(\mathcal{T}_h)) \colon v|_{J \otimes Z} \in P_1(J) \otimes P_1(Z),  \forall J \in \mathcal{T}_h, Z \in \mathcal{S}_h \Big\},
\end{equation*}
with spatial and temporal basis functions $\{\varphi_i\}_{i=1}^{N_{\Vcal}}$ and $\{\rho_s\}_{s=0}^{N_\T}$, respectively. Any discrete function $u_h \in \Vh$ can be expressed as
\[
u_h(\x,t) = \sum_{s=0}^{N_\T} \sum_{i=1}^{N_{\Vcal}} \mathbf{u}_{i,s} \, \varphi_i(\x) \rho_s(t),
\]
where $\mathbf{u}_{i,s}$ is the value at node $(\pmb{\xi}_i, t_s)$ with $\mathbf{u}=(\mathbf{u}_{1,0},\mathbf{u}_{2,0},\ldots,\mathbf{u}_{N_\Vcal, N_\T})\in \mathbb{R}^{N_\Vcal\times (N_\T+1)}$.
Next, we examine the function spaces induced by the linear operators $L_i$. 
Since $P_1$ functions are affine on each mesh element $T \in \mathcal{T}_h$, 
their spatial gradients are element-wise constant and belong to the broken $L^2$ space 
\[
P_0(\mathcal{T}_h) := \{ v \in L^2(\Omega) : v|_T \text{ is constant for all } T \in \mathcal{T}_h \}.
\]
This defines a discrete gradient space $\Qh$ composed of vector-valued functions $p_h = (p_{h_1}, \ldots, p_{h_d})^\top $ with $ p_{h_k} \in L^2(\mathcal{T}_h \times \T)$ for $k=1,\ldots d$, that are piecewise constant in space per element and affine in time:
\begin{equation*}
\Qh \coloneqq \Big\{ p \in \left(L^2(\mathcal{T}_h\times \T)\right)^d \colon p_k|_{J \otimes Z} \in P_0(J) \otimes P_1(Z), \forall L \in \mathcal{T}_h, J \in \mathcal{S}_h \Big\}.
\end{equation*}
Here, $\{\vartheta_l\}_{l=1}^{N_{\Qcal}}$ denotes the basis of $P_0(\mathcal{T}_h)$ with $N_{\Qcal} \in \mathbb{N}$.
The temporal cross-correlation in the regularizer function acts on functions $u_h \in \Vh$ as
$k_h\ast_\T \cdot \colon \Vh \to \WWh$,
where the cross-correlation of piecewise affine functions yields a piecewise cubic one
\begin{equation*}
\WWh \coloneqq \Big\{ w \in L^2(\T;H^1(\mathcal{T}_h)) \colon w|_{J \otimes Z} \in P_1(J) \otimes P_3(Z), \forall J \in \mathcal{T}_h, Z \in \mathcal{S}_h \Big\}.
\end{equation*}
We approximate time-dependent functions using the standard piecewise linear hat functions $\rho_i$ on the uniform temporal grid $t_0,\dots,t_{N_\T}$ with step size $\delta$, defined by $\rho_i(t)=
\max\{ 0, 1 - |t - t_i|/\delta \}$ for $i = 0,\dots,N_\T$.
These functions naturally lead to the temporal mass matrix
\[
\widetilde{\mathbf{D}}_{ij} = \int_\T \rho_i(t) \rho_j(t)\, dt
= \frac{\delta}{6}
\begin{cases}
4, & i=j, 1\le i \leq N_\T-1,\\
2, & i=j, i\in\{0,N_\T\},\\
1, & |i-j|=1,\\
0, & \text{otherwise}.
\end{cases}
\]
In space, we employ piecewise affine basis functions $\varphi_i$ on the mesh $\mathcal{T}_h$, leading to the spatial mass matrix
\[
\widetilde{\mathbf{M}}_{ij} = \int_{\mathcal{T}_h} \varphi_i(\x) \varphi_j(\x)\, d\x, \quad 1\le i,j \le N_\mathcal{V}.
\]
To handle functions that depend on both space and time, we combine the temporal and spatial matrices using tensor products $\widetilde{\mathbf{D}} = \mathbf{D} \otimes \mathbf{I}_{N_\mathcal{V}}$ and $\widetilde{\mathbf{M}} = \mathbf{I}_{S+1} \otimes \mathbf{M}$.
This construction ensures that the resulting matrices correctly account for all degrees of freedom of the discrete solution $u_h \in \mathcal{V}_h$, linking each spatial node with every time step.
Finally, for vector-valued functions with $d$ components, we define the standard $L^2$ inner product on the space--time domain $\mathcal{T}_h \times \T$ by
\[
(v_h, w_h)_{\mathcal{Y}_h^d} = \sum_{i=1}^{d} \int_{\mathcal{T}_h \times \T} v_{h,i}(\x,t)\, w_{h,i}(\x,t)\, d(\x,t),
\]
where $\mathcal{Y}_h := L^2(\mathcal{T}_h \times \T)$. 
This inner product provides the natural framework for measuring errors and formulating the fully discrete variational problem.

\subsection{Forward Operator}
The discretization of the forward operator $A$ in piecewise affine finite elements follows~\cite{Wa10}.
For almost all $t \in \T$, we consider the discrete version of~\eqref{eq:spatial_lifting}
\begin{equation}\label{eq:discrete_problem}
    a_h(v_h(\cdot,t),\varphi_h)=\int_{\mathcal{U}_h} \sigma_h(\x) \nabla_\x v_h(\x, t)\cdot\nabla_\x \varphi_h(\x) \dx \x=0 \quad \forall \varphi_h\in P_1(\mathcal{U}_h),\varphi_h|_{\mathcal{T}_h}=0
\end{equation}
with boundary conditions on the epicardium given by $v_h|_{\mathcal{T}_h} = u_h$ for some $u_h \in \Vh$.
Equivalently, we set $v_h = w_h + g_h$, where $w_h|_{\mathcal{T}_h} = 0$ and $g_h|_{\mathcal{T}_h} = u_h$, leading to
\begin{equation}\label{eq:discrete_problem_2}
a_h(w_h(\cdot,t),\varphi_h) = - a_h(g_h(\cdot,t),\varphi_h) \quad \forall \varphi_h \in P_1(\mathcal{U}_h), \ \varphi_h|{\mathcal{T}_h} = 0.
\end{equation}
By coercivity and continuity of $a$, for fixed boundary data $u_h$,~\eqref{eq:discrete_problem} admits a unique solution $v_h$. 
The discrete forward operator $A_h$ is then defined by restricting $v_h$ to the torso boundary, i.e. $A_h[u_h]=v_h|_\Gamma$. 
In practice, for each timestep $t \in \T$, this corresponds to solving a linear system for the piecewise affine finite element coefficients of $v_h$, yielding the mapping $\mathbf{v}_\mathbf{u} = \mathbf{A} \mathbf{u}$ from epicardium potentials to torso boundary.

\subsection{Linear Operators in the Regularizer}
The regularization functional involves linear operators $L[u] = (\epsilon_\theta u, \gradsurf u, K[u])$.
For the spatial gradient, we exploit the fact that piecewise affine functions on a manifold mesh admit the exact gradient representation
\[\norm{\gradsurf^h u_h}_{\Yh^d} =\norm{\gradsurf u_h}_{\Yh^d}
\]
for $u_h\in \Vh$.
The restriction $u_h|_J$ for $J\in \mathcal{T}_h$ with nodes $(\pmb{\xi}_i)_{i=1}^3$ is affine, and consequently, the elementwise constant Euclidean gradient can be written as
\[
\gradsurf u_h|_J(\x) = \sum_{i=1}^3 u_h(\pmb{\xi}_i) \gradsurf \varphi_i(\x),
\]
where $\{\varphi_i\}_{i=1}^3$ are the local nodal basis functions.
The discrete gradient is defined by tangential projection,
$\gradsurf^h u_h|_J \coloneqq (\mathbf{I}_d - \mathbf{n}_J \otimes \mathbf{n}_J)\gradsurf u_h|_J$,
where $\mathbf{n}_J$ denotes the unit normal of $J$.
We denote by $\pmb{\nabla}_{\Gamma_H}$ the matrix representation of $\gradsurf^h$ acting on the nodal values of piecewise affine functions.
By approximating $k \in L^1(\widetilde{\T})$ with piecewise affine temporal finite elements $k_h \in P_1(\widetilde{\mathcal{S}}_h)$ for a discretization $\widetilde{\mathcal{S}}_h$ using standard $L^2$-projection, we obtain the discrete operator $K_h$ and a nodal vector $\mathbf{k}$.
The temporal kernel $k_h$ is defined on an extended interval $\widetilde{\T} = [t_{-N_w}, t_{N_w}]$ with $N_w \in \mathbb{N}$, while $u_h$ is defined on $\T$.
The temporal cross-correlation $K_h[u_h](\x,t) = (k_h \ast_\T u_h)(\x,t)$ is discretized as
\begin{equation*}
K_h[u_h](\x,t) = \mathbf{1}_\T(t)\int_{\widetilde{\T}} k_h(\tau) u_h(\x, \tau + t) \, d\tau
= \sum_{i=1}^{N_\Vcal} \sum_{l=0}^{N_\T} \sum_{j=1}^{2 N_w} \mathbf{u}_{i,l} \mathbf{k}_j \D_{lj}(t) \varphi_i(\x).
\end{equation*}
Let $\mathbf{K}$ denote the matrix of $K_h$.
The resulting discretized operator is
\begin{equation}\label{eq:discrete_operator_1}
   L_h:\Vh \to \Vh\times \Qh\times \WWh,\quad L_h[u_h] \coloneq (\epsilon_\theta u_h, \gradsurf^h u_h, K_h[u_h]).
\end{equation}
The projection onto the norm ball associated with the potential functions $\phi_i$ in~\eqref{eq:potential_function} is a pointwise operation and therefore requires all involved discrete quantities to be represented in the same finite element space.
However, the discrete operator $L_h$ maps into a space of piecewise constant functions, which do not admit well-defined point values on element interfaces. 
As a consequence, the projection cannot be applied directly.
We introduce an interpolation operator $\mathcal{P}_h: \Vh\times \Qh\times \WWh\to(\Vh)^{d+2}$ such that $\widetilde{L}_h=\mathcal{P}_h \circ L_h$.
Note that the evaluation of the different dimensions in the linear operator in the same finite element space is not necessary for the squared $\ell^2$-norm part of $\phi_i$.
We define a variant $\widetilde{\phi}_i$ of $\phi_i$ with $\widetilde{\omega}_{\mu_i}^{d+2}$ such that
\begin{equation}\label{eq:phi_tilde}
    \widetilde{\omega}_{\mu}^{d+2}(\mathbf{y},\widetilde{\mathbf{y}}) 
= \left\| \widetilde{\mathbf{y}} - \mu\,\mathrm{Proj}_{B_{\ell^1}}\!\left(\widetilde{\mathbf{y}}/\mu\right) \right\|_{\infty}
+ \frac{\mu}{2}\left\| \mathrm{Proj}_{B_{\ell^1}}\!\left(\widetilde{\mathbf{y}}/\mu\right) \right\|_2^2
+ \frac{\epsilon_\omega}{2} \left\| \mathbf{y}/\mu \right\|_2^2,
\end{equation}
accounting for the difference in $L_h$ and $\widetilde{L}_h$.
We start with defining the interpolation operator $\mathcal{P}_h$.
The cross-correlation of two piecewise affine functions is piecewise cubic and globally continuous in the nodal basis.
To this end, we evaluate the cross-correlation at the temporal nodes of $\mathcal{V}_h$ and introduce an interpolation operator 
\[
\mathcal{P}_h^{\mathrm{temp}} : C^0(\overline{\T}) \to P_1(\mathcal{S}_h),
\quad \mathcal{P}_h^{\mathrm{temp}} [u](t) = \sum_{s=0}^{N_\T} u(t_s) \rho_s(t),
\]
with continuity for piecewise affine interpolations $\norm{\mathcal{P}_h^{\mathrm{temp}}[v]}_{L^2(\T)}\leq C \norm{v}_{L^2(\T)}$ for $v\in C^0(\overline{\T})$.
Remark that for any piecewise cubic function $v_h\in P_3(\mathcal{S}_h)$, we compute an upper bound~\cite{Br08} by the boundedness of the second derivative of $v_h$
\begin{equation}\label{eq:temp_interpol}
\norm{\mathcal{P}_h^{\text{temp}}v_h-v_h}_{L^2(\T)}\leq C h^2\norm{\partial_{tt}v_h}_{L^2(\T)}.
\end{equation}
We further define a spatial interpolation operator 
$\mathcal{P}_h^{\text{sp}} \colon L^2(\mathcal{T}_h) \to P_1(\mathcal{T}_h)$
as the $L^2$-projection onto the subspace of piecewise affine spatial functions, allowing for computation of the piecewise affine projection of piecewise constant functions.
The aforementioned $L^2$-projection is defined by 
\begin{equation*}
    \mathcal{P}_h^{\text{sp}}[p] = \argmin_{p_h\in P_1(\mathcal{T}_h)} \left\{\norm{p-p_h}_{\Yh}\right\}.
\end{equation*}
This orthogonal projection is computed by its optimality condition
\begin{equation}\label{eq:l2_proj}
    \int_{\mathcal{T}_h}\widetilde{p}_{h}(\x) \varphi(\x)\dx \x= \int_{\mathcal{T}_h}p_{h}(\x) \varphi(\x)\dx \x,\quad \forall \varphi \in P_1(\mathcal{T}_h)
\end{equation}
with $\widetilde{p}_{h}\coloneqq \mathcal{P}_h^{\text{sp}}[p_h]$.
By choosing $\varphi$ as the basis functions $(\varphi_i)_{i=1}^{N_\Vcal}$ and $p_h\in P_0(\mathcal{T}_h)$, we compute the nodal values $\widetilde{\mathbf{p}}$ of the projection by
\begin{equation*}
    \widetilde{\mathbf{M}}\widetilde{\mathbf{p}}=\mathbf{b} \quad \text{with} \quad \mathbf{b}_i = \int_{\mathcal{T}_h} p_h(\x)\varphi_i(\x)\dx\x=\sum_{j\colon \pmb{\xi}_i \in J_j}\mathbf{p}_j\int_{J_j} \varphi_i(\x)\dx\x=\sum_{j\colon \pmb{\xi}_i \in J_j}\mathbf{p}_j\frac{|J_j|}{d}
\end{equation*}
with $\varphi_i$ the basis function associated with node $\pmb{\xi}_i$ for $i=1,\ldots,N_\Vcal$.
We define the matrix representations of $\mathcal{P}_h^{\text{sp}}$ and $\mathcal{P}_h^{\text{time}}$ as $\mathbf{P}^{\text{sp}}$ and $\mathbf{P}^{\text{time}}$, respectively.
These interpolations allow us to define
\begin{equation}\label{eq:discrete_operator_2}
    \widetilde{L}_h\colon \Vh \to \Vh^{d+2},\quad \widetilde{L}_h[u_h] \coloneq (\epsilon_\theta u_h, \mathcal{P}_h^{\text{sp}} \gradsurf^h u_h, \mathcal{P}_h^{\text{temp}}K_h[u_h]).
\end{equation}
Since $\mathcal{P}_h^{\text{temp}}K_h[u_h]$ is piecewise affine in time, it suffices to evaluate it at the temporal nodes of $\mathcal{S}_h$
\begin{equation*}
    \mathcal{P}_h^{\text{temp}}K_h[u_h](\x,t)= \sum_{i=1}^{N_\Vcal} \sum_{s,l=0}^{N_\T} \sum_{j=1}^{2 N_w} \mathbf{u}_{i,l} \mathbf{k}_j \D_{lj}(t_s) \varphi_i(\x)\rho_s(t).
\end{equation*}
Exploiting uniform timesteps and the relation $t_{s_1} = t_{s_2} + t_{s_3}$ for integers $s_1=s_2+s_3$, we obtain the entries of the discretized temporal matrix
\begin{align*}
\mathbf{D}(t_s)_{ij} &= \int_{\widetilde{\T}} \rho_{i-N_w}(\tau) \rho_j(t_s + \tau) \, d\tau
= \int_{\widetilde{\T} }\rho_{i-N_w}(\tau) \rho_{j-s}(\tau) \, d\tau \\
&=
\frac{\delta}{6} \left\{
\begin{array}{lll}
2, &\text{if } j = (i-N_w)-s, & \ i \in \{0, 2N_w\} \text{ or } j \in \{0,N_\T\},\\
4, &\text{if } j = (i-N_w)-s, & \ 1\le i \le 2 N_w-1, 1\le j \le N_\T-1,\\
1, &\text{if } j-((i-N_w)-s) = 1, & \ i \in \{0,2N_w\} \text{ or } j \in \{0,N_\T\},\\
1, &\text{if } |j-((i-N_w)-s)| = 1, & \ 1 \le i \le 2 N_w-1, 1 \le j \le N_\T-1,\\
0, & \text{otherwise}.
\end{array}
\right.
\end{align*}
The computation of the discrete cross-correlation matrix is illustrated in~\cref{fig:discrete_cross_correlation}.
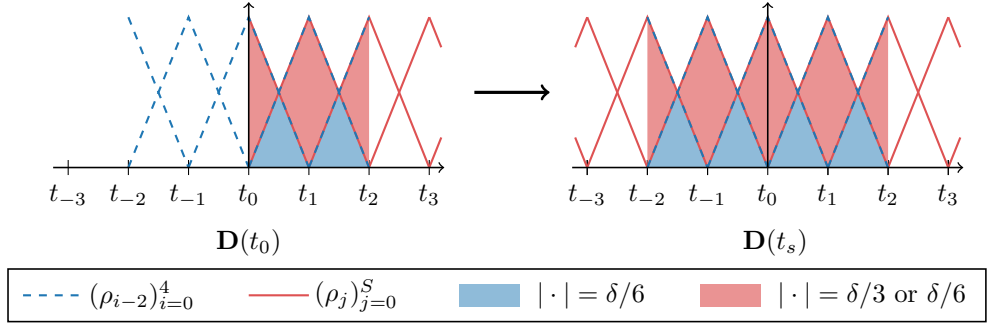
\begin{figure}
\centering
\begin{tikzpicture}
% Parameters % number of nodes
    \def\h{0.8} % spacing
    \def\height{2}

% ---------------- Original figure ----------------
    \foreach \i in {-3,...,3} {
    \pgfmathsetmacro\x{\i*\h} 
    \draw (\x,0.1) -- (\x,-0.1);
    \node[below] at (\x,-0.1) {$t_{\i}$};
  }

    \pgfmathsetmacro\xcenter{-0*\h}
    \pgfmathsetmacro\xright{(-0+1/2)*\h}
    \fill[red!50] 
        (\xcenter,\height) -- (\xright,\height/2) -- (\xcenter,0) -- cycle;
    
    \foreach \i in {1,...,1} {
        \pgfmathsetmacro\xleft{(\i-1/2)*\h}
        \pgfmathsetmacro\xcenter{\i*\h}
        \pgfmathsetmacro\xright{(\i+1/2)*\h}
        \fill[red!50] 
            (\xleft,\height/2) -- (\xcenter,\height) -- (\xright,\height/2) -- (\xcenter,0) -- cycle;
    }
    
    \pgfmathsetmacro\xleft{(2-1/2)*\h}
    \pgfmathsetmacro\xcenter{(2)*\h}
    \fill[red!50] 
        (\xcenter,\height) -- (\xleft,\height/2) -- (\xcenter,0) -- cycle;

    \foreach \i in {0,...,1} {
        \pgfmathsetmacro\xleft{(\i)*\h}
        \pgfmathsetmacro\xcenter{(\i+1/2)*\h}
        \pgfmathsetmacro\xright{(\i+1)*\h}
        \fill[blue!50] 
            (\xleft,0) -- (\xcenter,\height/2) -- (\xright,0) -- cycle;
    }

    \pgfmathsetmacro\xcenter{0*\h}
    \pgfmathsetmacro\xright{(0+1)*\h}
    \draw[thick,red!80]
        (\xcenter,\height) -- (\xright,0);
    
    \foreach \i in {1,...,2} {
        \pgfmathsetmacro\xleft{(\i-1)*\h}
        \pgfmathsetmacro\xcenter{\i*\h}
        \pgfmathsetmacro\xright{(\i+1)*\h}
        \draw[thick,red!80]
            (\xleft,0) -- (\xcenter,\height) -- (\xright,0);
    }
    
    \pgfmathsetmacro\xleft{(3-1)*\h}
    \pgfmathsetmacro\xcenter{3*\h}
    \draw[thick,red!80]
        (\xleft,0) -- (\xcenter,\height);

    \pgfmathsetmacro\xleft{3*\h}
    \draw[thick,red!80]
        (\xleft,0) -- ++(0.16,0.4);

    \pgfmathsetmacro\xcenter{3*\h}
    \draw[thick,red!80]
    (\xcenter,\height) -- ++(0.16,-0.4);

    \pgfmathsetmacro\xcenter{-2*\h}
    \pgfmathsetmacro\xright{(-2+1)*\h}
    \draw[thick,dashed,blue]
        (\xcenter,\height) -- (\xright,0);
    
    \foreach \i in {-1,...,1} {
        \pgfmathsetmacro\xleft{(\i-1)*\h}
        \pgfmathsetmacro\xcenter{\i*\h}
        \pgfmathsetmacro\xright{(\i+1)*\h}
        \draw[thick,dashed,blue]
            (\xleft,0) -- (\xcenter,\height) -- (\xright,0);
    }
    
    \pgfmathsetmacro\xleft{(2-1)*\h}
    \pgfmathsetmacro\xcenter{2*\h}
    \draw[thick,dashed,blue]
        (\xleft,0) -- (\xcenter,\height);

    \draw[line width=0.6pt,->] (-\h*3-0.2,0) -- (\h*3+0.2,0) ;
    \draw[line width=0.6pt,->] (0,0) -- (0,\height + .2);

% ---------------- Shifted figure ----------------
\begin{scope}[xshift=4.5cm]  % shift right by 4 cm
    % Draw axes
    \foreach \i in {0,...,6} {
    \pgfmathsetmacro\x{\i*\h} % compute node position
   \draw (\x,0.1) -- (\x,-0.1);  % tick
  }
    \foreach \i in {0,...,2} {
    \pgfmathsetmacro{\j}{int(\i-3)}
    \pgfmathsetmacro\x{\i*\h} % compute node position
   \draw (\x,0.1) -- (\x,-0.1);  % tick
   
   \node[below] at (\x,-0.1) {$t_{\j}$};
  }
    \foreach \i in {4,...,6} {
    \pgfmathsetmacro{\j}{int(\i-3)}
    \pgfmathsetmacro\x{\i*\h} % compute node position
   \draw (\x,0.1) -- (\x,-0.1);  % tick
   
   \node[below] at (\x,-0.1) {$t_{\j}$};
  }
  \node[below] at (3*\h,-0.1) {$t_{0}$};

    % Draw hat functions (same as before)
    \pgfmathsetmacro\xcenter{1*\h}
    \pgfmathsetmacro\xright{(1+1/2)*\h}
    \fill[red!50] 
        (\xcenter,\height) -- (\xright,\height/2) -- (\xcenter,0) -- cycle;
    
    \foreach \i in {2,...,4} {
        \pgfmathsetmacro\xleft{(\i-1/2)*\h}
        \pgfmathsetmacro\xcenter{\i*\h}
        \pgfmathsetmacro\xright{(\i+1/2)*\h}
        \fill[red!50] 
            (\xleft,\height/2) -- (\xcenter,\height) -- (\xright,\height/2) -- (\xcenter,0) -- cycle;
    }
    
    \pgfmathsetmacro\xleft{(5-1/2)*\h}
    \pgfmathsetmacro\xcenter{(5)*\h}
    \fill[red!50] 
        (\xcenter,\height) -- (\xleft,\height/2) -- (\xcenter,0) -- cycle;

    \foreach \i in {1,...,4} {
        \pgfmathsetmacro\xleft{(\i)*\h}
        \pgfmathsetmacro\xcenter{(\i+1/2)*\h}
        \pgfmathsetmacro\xright{(\i+1)*\h}
        \fill[blue!50] 
            (\xleft,0) -- (\xcenter,\height/2) -- (\xright,0) -- cycle;
    }

    \pgfmathsetmacro\xcenter{0*\h}
    \pgfmathsetmacro\xright{(0+1)*\h}
    \draw[thick,red!80]
        (\xcenter,\height) -- (\xright,0);
    
    \foreach \i in {1,...,5} {
        \pgfmathsetmacro\xleft{(\i-1)*\h}
        \pgfmathsetmacro\xcenter{\i*\h}
        \pgfmathsetmacro\xright{(\i+1)*\h}
        \draw[thick,red!80]
            (\xleft,0) -- (\xcenter,\height) -- (\xright,0);
    }
    
    \pgfmathsetmacro\xleft{(6-1)*\h}
    \pgfmathsetmacro\xcenter{6*\h}
    \draw[thick,red!80]
        (\xleft,0) -- (\xcenter,\height);
    
    \pgfmathsetmacro\xcenter{1*\h}
    \pgfmathsetmacro\xright{(1+1)*\h}
    \draw[thick,dashed,blue]
        (\xcenter,\height) -- (\xright,0);
    
    \foreach \i in {2,...,4} {
        \pgfmathsetmacro\xleft{(\i-1)*\h}
        \pgfmathsetmacro\xcenter{\i*\h}
        \pgfmathsetmacro\xright{(\i+1)*\h}
        \draw[thick,dashed,blue]
            (\xleft,0) -- (\xcenter,\height) -- (\xright,0);
    }
    
    \pgfmathsetmacro\xleft{(5-1)*\h}
    \pgfmathsetmacro\xcenter{5*\h}
    \draw[thick,dashed,blue]
        (\xleft,0) -- (\xcenter,\height);

    \pgfmathsetmacro\xleft{6*\h}
    \draw[thick,red!80]
        (\xleft,0) -- ++(0.16,0.4);

    \pgfmathsetmacro\xcenter{6*\h}
    \draw[thick,red!80]
    (\xcenter,\height) -- ++(0.16,-0.4);

        \pgfmathsetmacro\xleft{0*\h}
    \draw[thick,red!80]
        (\xleft,0) -- ++(-0.16,0.4);

    \pgfmathsetmacro\xcenter{0*\h}
    \draw[thick,red!80]
    (\xcenter,\height) -- ++(-0.16,-0.4);

    \draw[line width=0.6pt,->] (0*\h-0.2,0) -- (\h*6+0.2,0) ;
    \draw[line width=0.6pt,->] (3*\h,0) -- (3*\h,\height + .2);

\end{scope}

\draw[line width=0.6pt] (-3.2,-1.35) rectangle (9.8,-2.05);

\node[below] at (0,-0.7) {$\D(t_0)$};
\node[below] at (7,-0.7) {$\D(t_s)$};

\draw[line width=1.2pt, ->] (3,\height/2) -- (4,\height/2);

  \draw[thick,dashed,blue]
    (-3,-1.7) -- coordinate (A) ++(0.8,0);

  \node[right=10pt, text=black] at (A)
    {$(\rho_{i-2})_{i=0}^{4}$};

  \draw[thick,red!80]
    (A) ++(2.6,0) -- coordinate (B)++(0.8,0);
  \node[right=10pt, text=black] at (B)
    {$(\rho_{j})_{j=0}^S$};

\fill[blue!50] 
    (B) ++(2.4,0.15) -- ++(0.8,0) -- coordinate (C)++(0,-0.3) -- ++(-0.8,0) -- cycle;
      \node[right=3pt, text=black] at (C)
    {$|\cdot|=\delta/6$};

\fill[red!50] 
    (C) ++(2.4,0.15) -- ++(0.8,0) -- coordinate (D)++(0,-0.3) -- ++(-0.8,0) -- cycle;
      \node[right=3pt, text=black] at (D)
    {$|\cdot|=\delta/3$ or $\delta/6$};
    
\end{tikzpicture}
\caption{Illustration of the discretized cross-correlation computation of the matrix $\D(t)$ evaluated at the nodes $t_s$ of the uniform mesh with kernel defined in $\widetilde{\T}=[t_{-2},t_2]$.
The matrix entries are computed by the overlapping areas of the hat functions.}
\label{fig:discrete_cross_correlation}
\end{figure}

\subsection{Energy Functionals}
Let $\mathcal{J}_h(u_h, z_h)$ denote the finite element discretization of the continuous energy $\mathcal{J}(u,z)$ in~\eqref{eq:energy} such that we aim to optimize
\begin{equation}\label{eq:energy_discrete}
    \argmin_{u_h \in \Vh} 
    \left\{\mathcal{J}_h(u_h, z_h) \coloneqq G_h(u_h, z_h) + R_{\theta,h}(u)\right\},
\end{equation}
constructed from the discrete components $G_h$ with
\begin{equation*}
    G_h(u_h, z_h) = \frac{1}{2N_\Sigma}\sum_{i=1}^{N_\Sigma}\int_\T\left(\avint_{\mathcal{T}^{\Sigma_i}}A_h[u_h](\x,t) \dx S_h^\Gamma(\x)-z_{h,i}(t)\right)^2\dx t
\end{equation*}
and the regularizer function $R_{\theta,h}$ with potential functions $\widetilde{\phi_i}$ of~\eqref{eq:phi_tilde}
\begin{equation*}
    R_{\theta,h}(u_h) =\lambda_\theta\sum_{i=1}^{N_C}\int_{\mathcal{T}_h\times \T} \widetilde{\phi}_i(L_{h,i}[u_h](\x,t),\widetilde{L}_{h,i}[u_h](\x,t))\dxSh.
\end{equation*}
We extend the discrete functional in~\eqref{eq:energy_discrete} by setting it to $+\infty$ outside $\Vh$
\begin{equation*}
    \widetilde{\mathcal{J}}_h(u_h, z_h) =
\begin{cases}
\mathcal{J}_h(u_h, z_h), & \text{if } u_h \in \Vh,\\
+\infty, & \text{else}.
\end{cases}
\end{equation*}
For a fixed mesh size $h>0$, the discrete energies can be expressed as functions of the finite element degrees of freedom by evaluating with quadrature rules to compute the energy explicitly.
For the inverse problem, we define the linear operator notation $\widetilde{A}[u]_i(t) \coloneqq \avint_{\Sigma_i} A[u](\x,t)\dx S^\Gamma(\x)$ together with its matrix representation $\widetilde{\mathbf{A}}$, which is computed using the trapezoidal rule to ensure exact integration for piecewise affine functions.
The discrete data fidelity term reads
\[
\mathbf{G}_h(\mathbf{u},\mathbf{z}) = \frac{1}{2} (\widetilde{\mathbf{A}} \mathbf{u} - \mathbf{z})^\top 
\mathbf{D}^\Sigma (\widetilde{\mathbf{A}} \mathbf{u} - \mathbf{z}), 
\quad \text{with} \quad\mathbf{D}^\Sigma = \widetilde{\mathbf{D}} \otimes I_{N_\Sigma}.
\]
The discrete spatiotemporal regularizer is computed with the lumped quadrature rule
\[
\mathbf{R}_{\theta,h}(\mathbf{u}) = \lambda_\theta \sum_{i=1}^{N_C} \sum_{m=0}^{N_\T} 
\sum_{j=1}^{N_\mathcal{V}} \mathbf{D}^{\text{lump}}_{mm} \mathbf{M}^{\text{lump}}_{ii} 
\phi_i \left(
\begin{array}{c}
\epsilon_\theta \mathbf{u}_{j,m} \\
(\mathcal{P}_h^{\mathrm{sp}} \nabla_x^h \mathbf{u})_{j,m} \\
(\mathbf{k}_i^\top \mathbf{D}(t_m) (\mathbf{u}_{j,s})_{s=0}^{N_\T})
\end{array}
\right),
\]
where $\mathbf{D}^{\text{lump}}$ and $\mathbf{M}^{\text{lump}}$ denote the lumped mass matrices.
Remark that we computed the regularizer with $\phi_i$ instead of $\widetilde{\phi}_i$ to omit unnecessary computation in practical applications.
This formulation provides a fully discrete representation of the energy functional in terms of the finite element degrees of freedom, facilitating efficient numerical implementation.
\begin{remark}\label{remark:quadrature_energies}
    The functional $\mathbf{G}_h$ is an exact representation of $G_h$ since the piecewise affine functions are evaluated with an exact quadrature rule.
    The joint regularizer $\mathbf{R}_{\theta,h}$ converges with order $O(h^2)$.
\end{remark}

\subsection{Geometric Approximation and Lifting}
Following~\cite{El13}, we define lifting of functions between discretized and true domains.
We begin by introducing the bijective lifting from the discretized spatial domains to the continuous domain. 
The normal projection $\mathbf{x} \mapsto p(\mathbf{x})$ from a narrow band around $\partial \Omega_0$ is then defined as the unique solution of
$\x=p(\x)+d(\x)\mathbf{n}(p(\x))$
with the distance function
\begin{equation*}
d(\x)=
\begin{cases}
\begin{aligned}
&-\inf_{\mathbf{y}\in \partial\Omega_0}\left\{|\x-\mathbf{y}|\right\},  &&\mathbf{x}\in \Omega_0, \\
&\inf_{\mathbf{y}\in \partial\Omega_0}\left\{|\x-\mathbf{y}|\right\}, &&\x\notin \overline{\Omega}_0,
\end{aligned}
\end{cases}
\end{equation*}
and the normal to the boundary $\mathbf{n}(\x)=\nabla_\x d(\x)$ for almost every $\x\in\partial\Omega_0$.
We assume that $h$ is chosen small enough such that all vertices of $\mathcal{T}_h$ lie within a narrow band where the distance function and closest point projection are well defined.
Furthermore, the surface gradient of a function $v:\Gamma_H\to\mathbb{R}$ is defined as
$\gradsurf v = (\mathbf{I}_d - \mathbf{n} \otimes \mathbf{n})\nabla_\x v$
with $\nabla_\x v$ the gradient in the ambient coordinates of an arbitrary extension of $v$ to the narrow band.
The definition of the homeomorphism $H_h:\mathcal{U}_h\to\Omega_0$ follows~\cite{El13} by mapping each element of the computational domain back to a reference element and then to the true curved domain.
Denote the mapping from the reference element $\hat{J}$ to an element $J\in\mathcal{U}_h$ by $M_J$ and the mapping from the reference element to the curved element $J^e$ representing the true domain with curved boundary by $M_J^e$.
Then
\begin{equation*}
    H_h(\x)=M_J^e((M_J)^{-1}(\x)).
\end{equation*}
Restricted to the interior elements, the homeomorphism $H_h$ coincides with the identity.
\Cref{fig:lifting} illustrates the lifting of a boundary simplex in $\mathbb{R}^2$. 
A similar illustration for  $\mathbb{R}^3$ can be seen in~\cite{El13}.

\begin{figure}
\centering
\begin{tikzpicture}
\node[anchor=center] (mesh) at (-5,0) {\includegraphics[width=0.55\linewidth,trim=8cm 5cm 8cm 5cm,
    clip]{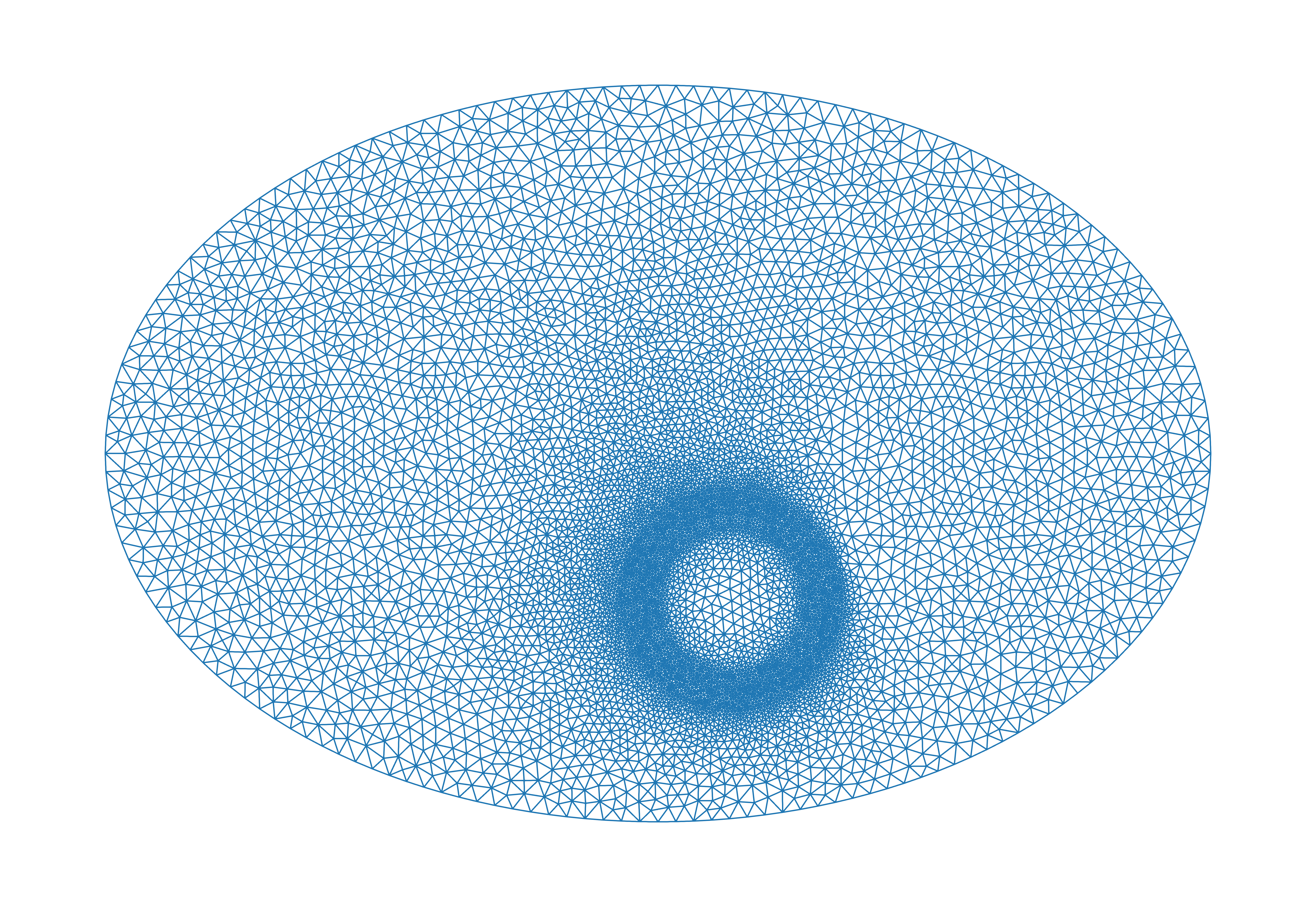}};

\draw[red, line width=0.8pt] ($(mesh.center) + (0.43,-0.87)$) circle (18.5pt);
\draw[fill=white, draw=none] ($(mesh.center) + (0.43,-0.87)$) circle (18pt);
\draw[red, line width=0.8pt] 
  (mesh.center) ellipse (91.7pt and 61.2pt);

\node at ($(mesh.center) + (-3,1.5)$) {${\color{red}\Gamma}/{\color{blue}\mathcal{T}_h^\Gamma}$};
\node at ($(mesh.center) + (0.43,-0.87)$) {${\color{red}\Gamma_H}/{\color{blue}\mathcal{T}_h}$};

\def\width{4.5}   % horizontal distance from A
\def\height{2.5}
\coordinate (A) at (-0.5,0.1); 
\coordinate (B) at ($(A) + (\width,0)$);
\coordinate (C) at ($(A) + (1.4,-\height)$);
\coordinate (D1) at ($(A) + (3.7,1)$);
\coordinate (D2) at ($(A) + (4,0)$);

  \draw[blue, line width=0.8pt] (A) -- coordinate[pos=0.34](E)(B) ;
  
  \draw[blue, line width=0.8pt] (A)  -- (C) -- (B);
  \draw[red, line width=0.8pt] (A) to[bend left=30] coordinate[pos=0.3](D) (B);

  \fill (A) circle (2pt);
  \fill (B) circle (2pt);
  \fill (D) circle (2pt) node[above left] {$p(\mathbf{y})$};
    \fill (E) circle (2pt) node[above right] {$\mathbf{y}$};
\fill (C) circle (2pt);

  \draw[green, ->, line width=0.8pt] (D) -- ($(D)!1.7!(E)$)node[right] {$\mathbf{n}(p(\mathbf{y}))$};
  \draw[blue, dashed, line width=0.8pt] (C) -- coordinate[pos=0.6](F)(E);
  \draw[red, dashed, line width=0.8pt] (C) -- coordinate[pos=0.6](G)(D);

    \fill (F) circle (2pt) node[below right] {$\mathbf{x}$};
    \fill (G) circle (2pt) node[above left] {$H_h(\mathbf{x})$};

\coordinate (O1) at (2.25-0.5,-0.382+0.1);
\def\Rone{68pt} 
\draw[line width=0.5pt] (O1) circle (\Rone);

\coordinate (O2) at ($(mesh.center) + (-0.06,2.12)$); 
\def\Rtwo{2.3pt} 
\draw[line width=0.5pt] (O2) circle (\Rtwo);

\draw[line width=0.5pt] ($(O1) + (0,\Rone)$) -- ($(O2) + (0,\Rtwo)$);

\draw[line width=0.5pt] ($(O1) + (\Rone*cos 225, \Rone*sin 225)$) -- 
  ($(O2) + (\Rtwo*cos 225, \Rtwo*sin 225)$);

\draw[red, line width=0.8pt]
    (-8.5,-3) -- coordinate (Z) ++(0.8,0);

\node[right=12pt, align=center, text=black] at (Z)
{
true boundary \\
$\color{red}\partial\Omega_0=\Gamma_H\cap\Gamma$
};

  \draw[blue, line width=0.8pt]
    (Z) ++(3.2,0) -- coordinate (Y)++(0.8,0);
  \node[right=12pt, align=center, text=black] at (Y)
    {discretized domain $\color{blue}\mathcal{U}_h$ with\\
boundary $\color{blue}\partial\mathcal{U}_h=\mathcal{T}_h\cap\mathcal{T}_h^\Gamma$
};

\draw[line width=0.5pt] (-8.7,-2.5) rectangle (0.5,-3.5);

\end{tikzpicture}
\caption{Lifting of the $\x$ point in a simplex at a boundary element $J\subset \mathcal{U}_h$ to the true domain by $H_h(\x)\in J^e\subset \Omega_0$ in $\mathbb{R}^2$.
We compute the linear projection $\mathbf{y}$ of $\x$ onto the boundary $\mathcal{T}_h^\Gamma$, take the closest-point projection $p(\mathbf{y})$ on $\Gamma$, and map $\x$ towards $p(\mathbf{y})$ to compute $H_h(x)$.}
\label{fig:lifting}
\end{figure}
To describe the local behavior of the discrete boundary mapping near $\partial \Omega_0$, we use the following estimates.
For boundary elements $J\in \partial \mathcal{U}_h$ that have more than one vertex on $\partial \Omega_0$, the homeomorphism satisfies
\[
\|\nabla_\x H_h^\top|_J - \mathrm{Id}\|_{L^\infty(J)} \leq c h 
\quad \text{and} \quad \norm{\det \nabla_\x H_h^\top|_J| - 1}_{L^\infty(J)} \leq c h,
\]
where $c>0$ is a constant independent of $h$. 
Furthermore, for any discretization $\partial\mathcal{U}_h$ of the boundary of $\partial\Omega_0$, we define the quotient of measures $\nu_h$ and $\nu_h^{\Gamma}$ such that $\dx S = \nu_h \dx S_h$ and $\dx S^\Gamma(\x) = \nu_h \dx S_h^\Gamma(\x)$ for surface measures $S_h$ and $S_h^\Gamma$ of $\mathcal{T}_h$ and $\mathcal{T}_h^\Gamma$, respectively.
The measures account for the geometric difference between the manifold and its discretizations, satisfying
\begin{equation}\label{eq:mani_meas_conv}
    \sup_{\mathcal{T}_h}\left\{|1-\nu_h|\right\}\leq ch \quad \text{and} \quad \sup_{\mathcal{T}_h^{\Gamma}}\left\{|1-\nu_h^{\Gamma}|\right\}\leq ch.
\end{equation}
As a result, we define a projection lifting a discretized signal to the continuous domain.
\begin{definition}[Lift and inverse lift]\label{def:lifting}
For a function $v_h:\mathcal{U}_h\to\mathbb{R}$ we define its \emph{lift} by
$v_h^l := v_h\circ H_h^{-1}$ on $\Omega_0$, and for $v:\Omega_0\to\mathbb{R}$ its \emph{inverse lift} by
$v^{-l} := v\circ H_h$ on $\mathcal{U}_h$.
Analogously, for $u_h:\partial\mathcal{U}_h\to\mathbb{R}$ the lift $u_h^l:\partial\Omega_0\to\mathbb{R}$ is defined by
$u_h^l(p(\mathbf{x})):=u_h(\mathbf{x})$, and for $u:\partial\Omega_0\to\mathbb{R}$ the inverse lift by
$u^{-l}(\mathbf{x}):=u(p(\mathbf{x}))$.
\end{definition}
As a consequence of these definitions, functions and their lifts are comparable in the relevant Sobolev norms, see~\cite[Proposition 4.9, 4.13]{El13}.
\begin{proposition}[Norm equivalence under lifting]\label{pro:norm_equiv}
Let $v_h:\mathcal{U}_h\to\mathbb{R}$ and $u_h:\partial\mathcal{U}_h\to\mathbb{R}$ with lifts
$v_h^l:\Omega_0\to\mathbb{R}$ and $u_h^l:\partial\Omega_0\to\mathbb{R}$.
Then the $L^2$- and $H^1$-norms of $v_h$ and $u_h$ are equivalent to the corresponding norms of their lifts.
The equivalence constants are independent of the discretization parameter $h$.
\end{proposition}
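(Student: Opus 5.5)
The approach is a change of variables through the lifting maps, combined with the uniform geometric estimates stated just before Definition~\ref{def:lifting}. Everything is done element by element and then summed. This is acceptable because the constants in those estimates depend only on the shape regularity of $\mathcal{U}_h$ and on the $C^2$ regularity of $\partial\Omega_0$, not on $h$.

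\textbf{Volume case.} For $v_h$ on $\mathcal{U}_h$, substitute $\x = H_h(\mathbf{y})$ on each element $J$:
\[
\int_{J^e} |v_h^l|^2\,\dx\x = \int_J |v_h|^2\, |\det \nabla_\x H_h|\,\dx\mathbf{y}.
\]
Interior elements are trivial because $H_h$ is the identity there. On boundary elements, the bound $\norm{|\det \nabla_\x H_h^\top|_J| - 1}_{L^\infty(J)}\le ch$ puts the Jacobian in $[1-ch,1+ch]$. For $h\le h_0$ this interval lies in $[1/2,2]$, which gives $L^2$ equivalence.

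For the gradient, the chain rule gives $\nabla_{\mathbf{y}} v_h = (\nabla_\x H_h)^\top (\nabla_\x v_h^l)\circ H_h$. The estimate $\|\nabla_\x H_h^\top - \mathrm{Id}\|_{L^\infty}\le ch$ shows that $\nabla H_h$ and its inverse are uniformly bounded, via a Neumann series for small $h$. Combining this with the Jacobian bound yields $\|\nabla v_h^l\|_{L^2(J^e)}\simeq \|\nabla v_h\|_{L^2(J)}$.

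Boundary elements with only one vertex on $\partial\Omega_0$ are not covered by the stated estimate. For these I would appeal to~\cite{El13}, where $H_h$ on such elements is still a perturbation of the identity with bounded derivatives. Since $H_h$ is bi-Lipschitz with $h$-independent constants, uniform bounds suffice here; smallness is not needed.

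\textbf{Surface case.} For $u_h$ on $\partial\mathcal{U}_h$ with $u_h^l\circ p = u_h$, the $L^2$ part follows directly from $\dx S = \nu_h\,\dx S_h$ and \eqref{eq:mani_meas_conv}, which give $\nu_h\in[1-ch,1+ch]$.

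For the surface gradient, differentiate $u_h = u_h^l\circ p$ tangentially on a flat element with normal $\mathbf{n}_J$:
\[
\gradsurf^h u_h = P_h (\mathbf{I}_d - d\,\mathcal{H})\, P\, \gradsurf u_h^l\circ p,
\]
where $P = \mathbf{I}_d-\mathbf{n}\otimes\mathbf{n}$, $P_h = \mathbf{I}_d-\mathbf{n}_J\otimes\mathbf{n}_J$, and $\mathcal{H} = \nabla^2 d$ is the Weingarten map, which is bounded by the $C^2$ assumption. The factors are controlled as follows:
\begin{itemize}
\item $|d|\le ch^2$ on $\mathcal{T}_h$;
\item $|\mathbf{n}-\mathbf{n}_J|\le ch$.
\end{itemize}
Hence the linear map $B_h = P_h(\mathbf{I}_d-d\mathcal{H})P$, restricted from the tangent space of $\Gamma_H$ to that of $J$, is $O(h)$-close to an isometry. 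It is therefore invertible with uniformly bounded inverse for $h\le h_0$. Integrating with the $\nu_h$ factor gives the $H^1$ equivalence.

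\textbf{Main obstacle.} I expect the hardest step to be establishing uniform invertibility of $B_h$, together with handling elements that are not covered by the stated $O(h)$ estimates. The key facts are the bounds on $|d|$ and $|\mathbf{n}-\mathbf{n}_J|$. I would derive them from the vertices lying on $\Gamma_H$ via a standard Taylor argument for the distance function, as in~\cite{El13}. For $h > h_0$ only finitely many mesh configurations are relevant, or the statement is simply restricted to $h\le h_0$.
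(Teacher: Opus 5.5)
Your proposal is correct and is essentially the argument behind the paper's treatment: the paper gives no proof of its own and simply cites \cite[Propositions 4.9, 4.13]{El13}, whose proofs are exactly this change of variables. In the bulk it goes through $H_h$ with the $O(h)$ bounds on $\nabla_\x H_h$ and its Jacobian, and on the surface through $p$ with $\nu_h$, $|d|\le ch^2$ and $|\mathbf{n}-\mathbf{n}_J|\le ch$. The extra care you take for elements with a single boundary vertex is not needed there, since in that construction such elements are not curved, so $H_h$ is the identity on them.
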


\section{Convergence}
In this section, we prove the Mosco convergence of the finite element discretized energy to the corresponding continuous energy under the convex regularization framework, i.e., assuming that the potential functions $\phi_i$ in~\eqref{eq:potential_function} are convex for $i=1,\ldots, N_C$. 
Moreover, we establish the convergence of minimizers.
The proof exploits some concepts of~\cite{Na24}.
We start by defining the topology for convergence.
\begin{definition}{($\mathcal{Z}$-topology)}
    Let $\Gamma_H$ be a surface with discretizations defined by $(\mathcal{T}_h)_h$, and let $(\mathcal{U}_h)_h$ be a family of shape-regular bulk discretizations of $\Omega_0$. 
    We say that $(u_h)_h$ on $\mathcal{T}_h\times \mathcal{S}_h$ converges weakly to $u\in\Vcal$ w.r.t. the $\mathcal{Z}$-topology denoted by $u_h \xrightharpoonup{\mathcal{Z}} u$ if and only if the lifted functions satisfy $u_h^l \xrightharpoonup{\Vcal} u$.
    We say that  $(u_h)_h$ converges strongly to $u$ w.r.t. the $\mathcal{Z}$-topology $u_h \xrightarrow{\mathcal{Z}} u$ if and only if $u_h^l \xrightarrow{\mathcal{V}} u$.
\end{definition}
We recall the definition of Mosco convergence \cite{Mo69}.
\begin{definition}{(Mosco-convergence)}
Functionals $\mathcal{J}_h:\Vh \to \overline{\mathbb{R}}$ for $h>0$ are said to converge to $\mathcal{J}:\Vcal \to \overline{\mathbb{R}}$ in the sense of Mosco w.r.t. the $\mathcal{Z}$-topology if
    \begin{enumerate}
    \item   
    for every sequence $(u_h)_h \subset \Vh$ with $u_h \xrightharpoonup{\mathcal{Z}} u $ the functional inequality $\mathcal{J}_h(u) \le \liminf_{h\to 0} \mathcal{J}_h(u_h)$ holds true ('\emph{liminf-inequality}').
    \item  
    for every $u\in\Vcal$ there exists a recovery sequence $(u_h)_h\subset \Vh$ such that  $u_h \xrightarrow{\mathcal{V}} u$ and $\mathcal{J}(u) \ge \limsup_{h\to 0} \mathcal{J}_h(u_h)$ ('\emph{limsup-inequality}').
\end{enumerate}
If in $1.$ the strong convergence $u_h \xrightarrow{\mathcal{V}} u$ is required,  then $\mathcal{J}_h$ is said to \(\Gamma\)-converge to $\mathcal{J}$ with respect to the $\mathcal{Z}$-topology.
\end{definition}
First, we prove the convergence of the discrete operator $\widetilde{L}_h$ defined in \eqref{eq:discrete_operator_2} to the continuous operator $L=(\epsilon_\theta u, \gradsurf u, K[u])$.
\begin{lemma}\label{lem:operator convergence}
Let $(u_h)_h\subset\Vh$ be a sequence whose lifts $u_h^l$ converge weakly in $\Vcal$ to some $u\in\Vcal$.
Assume that the kernels $k_{h}$ converge strongly to $k$ in $L^1(T)$.
Then the lifted operator
$\widetilde{L}_h^l[u_h^l]
\coloneqq \big(\epsilon_\theta u_h^l, (\mathcal{P}_h^{\mathrm{sp}} \gradsurf^h u_h)^l, K_h[u_h^l]\big)$
converges weakly in $\Ycal^{d+2}=\Ycal\times\Ycal^{d}\times\Ycal$ to $L[u]$.
Moreover, if $u_h^l$ converges strongly to $u$ in $\Vcal$, then $\widetilde{L}_h^l[u_h^l]$ converges strongly to $L[u]$ in $\Ycal^{d+2}$.
\begin{proof}
Assume that $u_h^l \rightharpoonup u$ weakly in $\Vcal$. 
We prove the weak convergence of $\widetilde{L}_h^l[u_h^l]$ componentwise in $\Ycal^{d+2}$.  
For the first component, we have $\epsilon_\theta u_h^l \rightharpoonup \epsilon_\theta u \quad \text{in } \Ycal$,
which follows directly from the weak convergence of $u_h^l$ in $\Vcal$ and the continuous embedding $\Vcal \hookrightarrow \Ycal$.
For the spatial gradient component, let $\varphi \in \Ycal^d$ be an arbitrary test function and compute
\begin{equation*}
\int_{\Gamma_H\times \T} (\mathcal{P}_h^{\text{sp}} \gradsurf^h u_h)^l \cdot \varphi  \dxS = \int_{\mathcal{T}_h\times \T}  \gradsurf^h u_h \cdot \mathcal{P}_h^{\text{sp}} \varphi^{-l} \nu_h  \dxSh,
\end{equation*}
where $\dxSh\coloneqq \dx S_h(\x)\otimes \dx\lambda(t)$  and the last equality follows from self-adjointness of the $L^2$-projection $\mathcal{P}_h^{\text{sp}}$ on $\Yh^d$.  
We then decompose the integrand as
\begin{align*}
\int_{\mathcal{T}_h\times \T}  \gradsurf^h u_h \cdot \mathcal{P}_h^{\text{sp}} \varphi^{-l}  \nu_h \, \dxSh
&= \int_{\mathcal{T}_h\times \T}  \gradsurf^h u_h \cdot \varphi^{-l}  \nu_h \, \dxSh \\
& + \int_{\mathcal{T}_h\times \T}  \gradsurf^h u_h \cdot (\mathcal{P}_h^{\text{sp}} \varphi^{-l} - \varphi^{-l})  \nu_h  \dxSh.
\end{align*}
For the first term, the weak convergence of the lifted discrete gradients, $( \gradsurf^h u_h)^l \rightharpoonup  \gradsurf u$ in $\Ycal^d$, together with the change of variables from $\mathcal{T}_h$ to $\Gamma_H$, gives
\[
\int_{\mathcal{T}_h\times \T}  \gradsurf^h u_h \cdot \varphi^{-l}  \nu_h  \dxSh \to
\int_{\Gamma_H\times \T}  \gradsurf u \cdot \varphi  \dxS
\]
as $h\to 0$.
For the second term, we apply the Cauchy–Schwarz inequality together with the uniform boundedness of $\gradsurf^h u_h$ in $L^2(\mathcal{T}_h\times \T)$ such that
\begin{align*}
&\left|\int_{\mathcal{T}_h\times \T}  \gradsurf^h u_h \cdot (\mathcal{P}_h^{\text{sp}} \varphi^{-l} - \varphi^{-l})  \nu_h  \dxSh\right| \\
&\quad \leq C \, \min_{v_h^l \in (\Yh^l)^d} \left( \int_{\Gamma_H\times \T} |v_h^l - \varphi|^2  \dxSh \right)^{1/2}.
\end{align*}
By the denseness in $\norm{\cdot}_{\Ycal^d}$, this last term tends to $0$ as $h \to 0$. 
Hence we conclude that $(\mathcal{P}_h^{\text{sp}} \gradsurf^h u_h)^l \rightharpoonup \gradsurf u \quad \text{in } \Ycal^d$.
For the last component, $K_h[u_h^l]$, we first show the weak convergence of the temporal cross-correlation $k_h *_{\T} u_h^l \rightharpoonup k *_{\T} u \quad \text{in } \Ycal$.
Define the flipped kernel functions $\widetilde{k}_h(t) \coloneqq k_h(-t)$ and $\widetilde{k}(t) \coloneqq k(-t)$ with $\widetilde{k}_h \to \widetilde{k}$ strongly in $L^1(\T)$. 
For any test function $\varphi \in \Ycal$, we decompose
\[
( k_h *_{\T} u_h^l, \varphi )_\Ycal
= ( u_h^l, \widetilde{k} *_{\T} \varphi )_\Ycal
+ ( u_h^l, (\widetilde{k}_h - \widetilde{k}) *_{\T} \varphi )_\Ycal.
\]
The second term vanishes as $h \to 0$ by Cauchy--Schwarz and Young’s inequality together with 
$\|\widetilde{k}_h - \widetilde{k}\|_{L^1(\T)} \to 0$. 
The first term converges to $( k *_{\T} u, \varphi )_\Ycal$ by weak convergence of $u_h^l$ in $\Ycal$.
Including the temporal interpolation operator, we write
\[
( \mathcal{P}_h^{\mathrm{temp}}(k_h *_{\T} u_h^l), \varphi )_\Ycal
= ( k_h *_{\T} u_h^l, \varphi )_\Ycal
+ ( \mathcal{P}_h^{\mathrm{temp}}(k_h *_{\T} u_h^l) - k_h *_{\T} u_h^l, \varphi )_\Ycal,
\]
where the additional term vanishes as $h \to 0$ by~\eqref{eq:temp_interpol}.
The first term converges weakly as shown above.  
Hence, we conclude $
(\mathcal{P}_h^{\mathrm{temp}}(k_h *_{\T} u_h^l)) \rightharpoonup k *_{\T} u$ in $\Ycal$,
completing the proof for the temporal component and weak convergence.

Next, assume that $u_h^l \rightarrow u$ strongly in $\Vcal$. 
We again prove the strong convergence of $\widetilde{L}_h^l[u_h^l]$ componentwise in $\Ycal^{d+2}$.  
For the first component, strong convergence follows directly from the assumption $\epsilon_\theta u_h^l \rightarrow \epsilon_\theta u$ in $\Ycal$.
For the spatial gradient, we decompose the convergence error using an intermediate approximation $v_h \in \Vh$ which is a piecewise affine function whose lift satisfies $v_h^l \rightarrow \gradsurf u$ strongly in $\Ycal^d$. 
The existence of such a sequence is guaranteed by the approximation property of $(\Yh^l)^d$.  
We then write
\begin{align*}
\| (\mathcal{P}_h^{\text{sp}} \gradsurf^h u_h)^l - \gradsurf u \|_{\Ycal^d} 
&\le \| (\mathcal{P}_h^{\text{sp}} \gradsurf^h u_h)^l - (\mathcal{P}_h^{\text{sp}} v_h)^l \|_{\Ycal^d} \\
&+ \| (\mathcal{P}_h^{\text{sp}} v_h)^l - v_h^l \|_{\Ycal^d} 
+ \| v_h^l - \gradsurf u \|_{\Ycal^d}.
\end{align*}
For the first term, lifting back to $\mathcal{T}_h$ and using the stability of $\mathcal{P}_h^{\text{sp}}$ gives
\begin{align*}
&\| (\mathcal{P}_h^{\text{sp}} \gradsurf^h u_h)^l - (\mathcal{P}_h^{\text{sp}} v_h)^l \|_{\Ycal^d}^2
= \int_{\mathcal{T}_h \times \T} \left| \mathcal{P}_h^{\text{sp}} (\gradsurf^h u_h - v_h) \right|^2 \nu_h \, \dxSh \\
\le& \int_{\mathcal{T}_h \times \T} \left| \gradsurf^h u_h - v_h \right|^2 \nu_h \, \dxSh
\le \| (\gradsurf^h u_h)^l - \gradsurf u \|_{\Ycal^d}^2 + \| \gradsurf u - v_h^l\|_{\Ycal^d}^2,
\end{align*}
which converges to $0$ as $h \to 0$ by the strong convergence assumption and the choice of $v_h$.  
The second term vanishes exactly, since $\mathcal{P}_h^{\text{sp}} v_h = v_h$.  
The third term converges to $0$ by construction of $v_h$.  
Hence we conclude $(\mathcal{P}_h^{\text{sp}} \gradsurf^h u_h)^l \rightarrow \gradsurf u$ strongly in $\Ycal^d$.
Finally, we prove the strong convergence of the temporal cross-correlation component.  
We decompose the error as
\begin{align*}
\| K_h[u_h^l] - K[u] \|_\Ycal 
&\le \| K_h[u_h^l] - (k_h *_{\T} u_h^l) \|_\Ycal \\
&+ \| (k_h *_{\T} u_h^l) - (k *_{\T} u_h^l) \|_\Ycal 
+ \| K[u_h^l] - K[u] \|_\Ycal.
\end{align*}
For the first term, the temporal interpolation estimate~\eqref{eq:temp_interpol} gives
\begin{equation*}
\| K_h[u_h^l] - (k_h *_{\T} u_h^l) \|_\Ycal 
\leq C h^2 \| \partial_{tt} (k_h *_{\T} u_h^l) \|_\Ycal \rightarrow 0.
\end{equation*}
The second term is bounded using Young's inequality
\begin{equation*}
\| (k_h *_{\T} u_h^l) - (k *_{\T} u_h^l) \|_\Ycal 
\leq \| k_h - k \|_{L^1(\T)} \| u_h^l \|_\Ycal \rightarrow 0
\end{equation*}
since $k_h \to k$ in $L^1(\T)$. 
For the last term, Young's inequality yields
\begin{equation*}
\| K[u_h^l] - K[u] \|_\Ycal 
\leq \| k \|_{L^1(\T)} \| u_h^l - u \|_\Ycal \rightarrow 0
\end{equation*}
due to the strong convergence $u_h^l \rightarrow u$ in $\Ycal$.  
Combining these three estimates, we conclude $K_h[u_h^l] \rightarrow K[u]$ strongly in $\Ycal$,
which completes the proof of strong convergence for the temporal cross-correlation and the strong convergence.
\end{proof}
\end{lemma}
\begin{remark}
The weak and strong convergence results of \cref{lem:operator convergence} also hold for the operator $L_h$ in~\eqref{eq:discrete_operator_1} in place of $\widetilde{L}_h$.  
Both discretizations converge to the same limit operator $L[u]$ in $\Ycal^{d+2}$.
\end{remark}
Next, we consider the convergence of the solution of the forward problem.
\begin{lemma}\label{lem:forward_operator_conv}
Let $v\in L^2(\T;H^1(\Omega_0))$ be the weak solution of~\eqref{eq:E_forward} w.r.t. $u\in \Vcal$.
For each $h>0$, denote by $v_h\in P_1(\mathcal{U}_h)\otimes P_1(\mathcal{S}_h)$ the weak solution of the discretized forward problem~\eqref{eq:spatial_lifting} with boundary condition $u_h\in\Vh$.
Let $u_h^l$ and $v_h^l$ denote the lifted functions.
Then the following statements hold:
\begin{enumerate}
    \item If $u_h^l \rightharpoonup u$ weakly in $\Vcal$, then $v_h^l \rightharpoonup v$ weakly in $L^2(\T;H^1(\Omega_0))$.
    \item If $u_h^l \xrightarrow{} u$ strongly in $\Vcal$, then $v_h^l \xrightarrow{} v$ strongly in $L^2(\T;H^1(\Omega_0))$.
\end{enumerate}
\begin{proof}
By lifting the function, we can rewrite~\eqref{eq:discrete_problem} as finding $v_h^l$ with epicardium boundary function $u_h^l$ such that
\begin{equation*}
    a_h^l(v_h^l(\cdot,t),\varphi_h^l)\coloneqq \int_{\Omega_0} \sigma_h(\x) \nabla_\x  H_h^\top \nabla_x\x v_h^l(\x, t)\cdot(\nabla_\x  H_h^\top\nabla_\x \varphi_h^l(\x)) \frac{1}{|\det \nabla_\x  H_h^\top|} \dx \x=0,
\end{equation*}
for all $\varphi_h^l\in (P_1(\mathcal{U}_h))^l$ with $\varphi_h^l|_{\Gamma_H}=0$.
Coercivity and continuity follow analogously.
Then, we rewrite the weak problem as in~\eqref{eq:discrete_problem_2} using $v_h^l=w_h^l +g_h^l$, which yields
\begin{equation*}
    a_h^l(w_h^l(\cdot,t),\varphi_h^l)=-a_h^l(g_h^l(\cdot,t),\varphi_h^l) \quad \forall \varphi_h^l\in P_1(\mathcal{U}_h),\varphi_h^l|_{\Gamma_H}=0.
\end{equation*}
Testing with $\varphi_h^l=w_h^l(\cdot,t)$ for almost all $t\in \T$ yields
\begin{equation*}
\norm{w^l_h(\cdot,t)}_{H^1(\Omega_0)}\leq C \norm{g^l_h(\cdot,t)}_{H^1(\Omega_0)}.
\end{equation*}
Hence, there exists an upper bound on the solution to the problem
\begin{equation*}
    \norm{v^l_h(\cdot,t)}_{H^1(\Omega_0)}\leq \norm{g^l_h(\cdot,t)}_{H^1(\Omega_0)} + \norm{w^l_h(\cdot,t)}_{H^1(\Omega_0)}\leq C \norm{u^l_h(\cdot,t)}_{H^{1/2}(\Gamma_H)}.
\end{equation*}
By integrating over $\T$, we prove an upper bound in $L^2(\T;H^1(\Omega_0))$ and by weak compactness up to a subsequence, we have $v_h^l\rightharpoonup \widetilde{v}$ weakly in $L^2(\T;H^1(\Omega_0))$.
Choose any $\varphi\in H^1(\Omega_0)$ with $\varphi|_{\Gamma_H}=0$ and a sequence $\varphi_h^l\to \varphi$ strongly in $H^1(\Omega_0)$ with $\varphi_h\in P_1(\mathcal{U}_h)$, then by consistency of the lifting, we have
\begin{equation*}
    \int_\T a_h^l(v_h^l(\cdot, t),\varphi_h^l)\dx t\to \int_\T a(\widetilde{v}(\cdot, t),\varphi)\dx t \quad \text{with} \quad a(\widetilde{v}(\cdot, t),\varphi)=0 \quad \text{for a.e.} \quad t\in\T.
\end{equation*}
Moreover, for almost every $t\in\T$, $\widetilde{v}(\cdot, t)|_{\Gamma_H}=u(\cdot, t)$  and by uniqueness of the continuous problem $\widetilde{v}(\cdot, t)=v(\cdot, t)$ which proves the weak convergence in $L^2(\T;H^1(\Omega_0))$.

Next, we assume that $u_h^l\xrightarrow[]{} u$ strongly in $\Vcal$.
For almost all $t\in T$, there exists an upper bound on the true solution to the problem
\begin{equation*}
    \norm{v(\cdot,t)-v^l_h(\cdot,t)}_{H^1(\Omega_0)}\leq \norm{g(\cdot,t)-g^l_h(\cdot,t)}_{H^1(\Omega_0)} + \norm{w(\cdot,t)-w^l_h(\cdot,t)}_{H^1(\Omega_0)}.
\end{equation*}
    The first part of the sum can be estimated from above by the trace extension operator on the epicardium, such that
    \begin{equation*}
        \norm{g(\cdot,t)-g^l_h(\cdot,t)}_{H^1(\Omega_0)}\leq C_E \norm{u(\cdot,t)-u_h^l(\cdot,t)}_{H^{1/2}(\Omega_0)},
    \end{equation*}
    which vanishes by assumption.
    For the upper bound of the second error, we refer the reader to~\cite[Theorem 6.1]{El13}.
    Integrating over time yields strong convergence in $\Vcal$.
\end{proof}
\end{lemma}
Next, we prove the convergence of the energy functionals.
\begin{lemma}\label{lem:convergence_regularizer_data} Let $R_\theta$ be convex, $z_h\xrightarrow{}z$ in $(L^2(T))^{N_\Sigma}$, and $k_{h,i}\xrightarrow{}k_i$ in $L^1(\T)$ for $i=1,\ldots,N_C$.
Then for any sequence $(u_h)_h\subset\Vh$ with $u_h\xrightharpoonup{\mathcal{Z}} u\in\Vcal$, we have $G(u,z) \leq \liminf_{h\to 0}G_h(u_h,z_h)$ and $ R_\theta(u) \leq \liminf_{h\to 0}R_{\theta,h}(u_h)$.
If $u_h \xrightarrow{\mathcal{Z}} u $, then $G(u,z) = \lim_{h\to 0}G_h(u_h,z_h)$ and $R_\theta(u) = \lim_{h\to 0}R_{\theta,h}(u_h)$.
\begin{proof}
Assume that $u_h\xrightharpoonup{\mathcal{Z}} u$ and therefore $u_h^l\xrightharpoonup{\Vcal} u$.
Denote by $v_h^l$ the lifted solution with respect to the boundary condition $u_h^l$ of the weak formulation~\eqref{eq:discrete_problem}.
By \cref{lem:forward_operator_conv}, we know that
$v_h^l \xrightharpoonup{\Vcal} v$.
Define $A_h^l$ as the lifted operator depending on the lifted solution $v_h^l$ of the discretized problem~\eqref{eq:spatial_lifting} for boundary condition $u_h^l$.
By concatenation with the linear bounded trace operator, weak convergence is preserved
$A_h^l[u_h^l] \xrightharpoonup{\Vcal} A[u]$.
The results follow from the lower semicontinuity of $D$ and the strong convergence of $z_h$.

Since the $\phi_i$ are convex and continuous in all arguments, $R_\theta$ is weak lower semicontinuous and we have together with~\Cref{lem:operator convergence}
\begin{align*}
     R_\theta(u)&\leq \liminf_{h\to 0} \lambda_\theta\sum_{i=1}^{N_C}\int_{\Gamma_H\times \T} \phi_i(L_{h,i}^l[u_h^l](\x,t),\widetilde{L}_{h,i}^l[u_h^l](\x,t))\dxS \\
     &= \liminf_{h\to 0} \lambda_\theta\sum_{i=1}^{N_C}\int_{\mathcal{T}_h\times \T} \phi_i(L_{h,i}[u_h](\x,t),\widetilde{L}_{h,i}[u_h](\x,t))\nu_h\dxSh.
\end{align*}
By~\eqref{eq:mani_meas_conv}, we can drop $\nu_h$ in the limit $h\to 0$ and conclude $R_\theta(u)\leq \liminf_{h\to 0} R_{\theta,h}(u_h)$.
Assume $u_h\xrightarrow{\mathcal{Z}} u$, then the lifted sequence converges $u_h^l\xrightarrow{\Vcal} u$,
\cref {lem:forward_operator_conv} together with the trace theorem yield for almost every $t\in \T$ and $i=1,\ldots, N_\Sigma$
\begin{equation*}
    \norm{A[u](\cdot,t)-A_h^l[u_h^l](\cdot,t)}_{H^{1/2}(\Sigma_i)}\leq C_T\norm{v(\cdot,t)-v_h^l(\cdot,t)}_{H^1(\Omega_0)}\xrightarrow{}0.
\end{equation*}
Moreover, we can write by Cauchy-Schwarz and the embedding of $L^1$ in $L^2$ for bounded domains
\begin{equation*}
    \left|\avint_{\Sigma_i}A[u](\x,t)-A_h^l[u_h^l](\x,t)\dx S^\Gamma(\x)\right|^2\leq\frac{1}{|\Sigma_i|}\norm{A[u](\cdot,t)-A_h^l[u_h^l](\cdot,t)}_{L^2(\Sigma_i)}^2 \xrightarrow{}0.
\end{equation*}
Therefore, together with the strong convergence of $z_h$
\begin{equation*}
\int_\T\left(\avint_{\Sigma_i}A[u]\dx S^\Gamma(\x) -z_i\right)^2\dx t  = \int_\T\left(\avint_{\Sigma_i}A_h^l[u_h^l]\dx S^\Gamma(\x) -z_{h,i}\right)^2\dx t + o(1).
\end{equation*}
By applying the inverse lifting to the boundary manifold subject to $A_h^l[u_h^l](p(\x),t)=A_h[u_h](\x,t)$, we can rewrite it such that 
\begin{equation*} 
\int_\T\left(\avint_{\Sigma_i}A_h^l[u_h^l]\dx S^\Gamma(\x) -z_{h,i}\right)^2\dx t=\int_\T\left(\avint_{\mathcal{T}^{\Sigma_i}}A_h[u_h]\dx S_h^\Gamma(\x) -z_{h,i}\right)^2\dx t + o(1).
\end{equation*}
Summing over all electrodes $i=1,\dots,N_\Sigma$ and scaling by $(2N_\Sigma)^{-1}$ yields the claim.

We show the convergence of the regularizer by the dominated convergence theorem.
By~\Cref{lem:operator convergence}, $u_h^l \xrightarrow{\Vcal} u$ implies that $\widetilde{L}_{h,i}^l[u_h^l] \xrightarrow{\Ycal^{d+2}} L[u]$.
Thus, there exists a subsequence such that $\widetilde{L}_{h,i}^l[u_h^l](\x,t) \to L[u](\x,t) $ for almost every $(\x,t)\in \Gamma_H\times \T$.
Since the $\phi_i$ are continuous in all arguments, we get 
\begin{equation*}
\widetilde{\phi}_i(L_{h,i}^l[u_h^l]((\x,t),\widetilde{L}_{h,i}^l[u_h^l]((\x,t)) \to \phi_i(L[u](\x,t)) \quad\text{for almost every}\quad (\x,t)\in \Gamma_H\times \T.
\end{equation*}
For integrability, we need to show a growth estimate on $\widetilde{\phi}_i$.
We start with bounding $\widetilde{\omega}_\mu^{d+2}$ from above
$\| \mathrm{Proj}_{B_{\ell^1}}(\mathbf{y}/\mu) \|_2\leq \| \mathrm{Proj}_{B_{\ell^1}}(\mathbf{y}/\mu) \|_1 \leq 1$
and by the triangle inequality 
\begin{equation*}
\left\| \mathbf{y} - \mu\,\mathrm{Proj}_{B_{\ell^1}}\!\left(\mathbf{y}/\mu\right) \right\|_{\infty}\leq \norm{\mathbf{y}}_\infty + \mu \norm{\mathrm{Proj}_{B_{\ell^1}}\!\left(\mathbf{y}/\mu\right)}_\infty\leq \norm{\mathbf{y}}_2 + \mu.
\end{equation*}
Both combined ensure the boundedness $\widetilde{\omega}_\mu^{d+2}(\mathbf{y},\widetilde{\mathbf{y}})\leq C_1+C_2(\norm{\mathbf{y}}_2^2+\norm{\widetilde{\mathbf{y}}}_2^2)$ and by the boundedness of $\norm{\mathbf{Q}_i}$, we have the growth condition
\begin{equation*}
    \widetilde{\phi}_i(\mathbf{y},\widetilde{\mathbf{y}})= \mu_i \widetilde{\omega}_{\mu_i}^{d+2} (\mathbf{y},\widetilde{\mathbf{y}}) + \mu_i\widetilde{\omega}_{\eta_i\mu_i}^{d+2} (\mathbf{Q}_i\mathbf{y},\mathbf{Q}_i\widetilde{\mathbf{y}})\leq\widetilde{C}_1 +\widetilde{C}_2(\norm{\mathbf{y}}_2^2+\norm{\widetilde{\mathbf{y}}}_2^2).
\end{equation*}
Therefore, $\widetilde{\phi}_i(L_{h,i}^l[u_h^l](\x,t),\widetilde{L}_{h,i}^l[u_h^l](\x,t))$ is uniformly integrable in $L^1(\Gamma_H\times T)$ and dominated convergence yields
\begin{align*}
     R_\theta(u)&=\lim_{h\to 0} \lambda_\theta\sum_{i=1}^{N_C}\int_{\Gamma_H\times \T} \widetilde{\phi}_i(L_{h,i}^l[u_h^l](\x,t),\widetilde{L}_{h,i}^l[u_h^l](\x,t))\dxS \\
     &= \lim_{h\to 0} \lambda_\theta\sum_{i=1}^{N_C}\int_{\mathcal{T}_h\times \T} \widetilde{\phi}_i(L_{h,i}[u_h](\x,t),\widetilde{L}_{h,i}[u_h](\x,t))\nu_h\dxSh =  R_{\theta,h}(u_h).
\end{align*}
\end{proof}
\end{lemma}
Next, we prove Mosco-convergence and convergence of minimizers.
\begin{theorem}{(Mosco-convergence)}\label{th:gamma_convergence}
Let the assumptions of \Cref{lem:convergence_regularizer_data} be satisfied.
Then $\widetilde{\mathcal{J}}_h$ Mosco-converges to $\mathcal{J}$ with respect to the $\mathcal{Z}$-topology.
\begin{proof}
('\emph{liminf-inequality}')
Let $(u_h)_h \subset \mathcal{V}_h$ be a sequence such that $u_h \overset{\mathcal{Z}}{\rightharpoonup} u \in \mathcal{V}$. 
Thus, we have $u_h^l \overset{\Vcal}{\rightharpoonup} u$ for the projection $u_h^l$ of $u_h$.
By construction of $u_h\in\Vh$, we have $
\sup_{h>0} \{\widetilde{\mathcal{J}}_h(u_h,z_h)\} < \infty$.
Then, by~\cref{lem:convergence_regularizer_data}, it follows that
\[
R_\theta(u) \leq \liminf_{h \to 0} R_\theta(u_h) \quad \text{and} \quad D(u,z) \leq \liminf_{h \to 0} D(u_h^l, z).
\]
Combining these results, we obtain $\mathcal{J}(u,z) \leq \liminf_{h \to 0} \widetilde{\mathcal{J}}_h(u_h, z_h)$,
which establishes the $\liminf$ condition.

('\emph{limsup-inequality}')
Since $\Gamma_H$ is a closed Riemannian manifold, $C^\infty(\Gamma_H)$ is dense in $H^1(\Gamma_H)$ as proven in~\cite{He96}. 
Applying mollification in time to $u \in \mathcal{V}$, we obtain~$u \ast p_\eta \in C^\infty(\overline{\T};H^1(\Gamma_H))$ converging in $\mathcal{V}$ as $\eta \to 0$. 
Hence, we can construct a smooth sequence $(u_n)_n \subset C^\infty(\Gamma_H \times \overline{\T})$ such that $u_n \xrightarrow{\mathcal{V}} u$.
For each $u_n$, we construct a sequence $(u^l_{n,h})_h$ with $u^l_{n,h} \xrightarrow{\mathcal{V}} u_n$. 
Selecting a diagonal subsequence $h=h(n)$ yields a recovery sequence $u^l_{h(n)} \xrightarrow{\mathcal{V}} u$.
\Cref{pro:norm_equiv} and integration over time implies that if $u_h^l\in\Vcal$ then $u_h\in L^2(T;H^1(\mathcal{T}_h))$ and the interpolation error vanishes as $h\to 0$.
Finally, applying~\cref{lem:convergence_regularizer_data} gives
\[
\lim_{h \to 0} \widetilde{\mathcal{J}}_h(u_h, z_h) = \lim_{h \to 0} G_h(u_h,z_h) + R_\theta(u_h)=G(u,z) + R(u,z)= \mathcal{J}(u,z),
\]
which establishes the limsup inequality and completes the proof.
\end{proof}
\end{theorem}
Finally, we establish convergence of the discrete minimizers to the continuous ones.
\begin{theorem}{(Convergence of minimizers)}\label{th:minimizer_convergence}
Let the assumptions of \Cref{lem:convergence_regularizer_data} be satisfied and $(u_h)_h \subset \mathcal{V}_h$ be a sequence of minimizers of the discrete energies $\widetilde{\mathcal{J}}_h$.
Then, the sequence $(u_h)_h$ converges weakly (up to a subsequence) in the $\mathcal{Z}$-topology to a minimizer of $\mathcal{J}$, and $\lim_{h \to 0} \inf_{u_h \in \mathcal{V}_h} \left\{\widetilde{\mathcal{J}}_h(u_h, z_h) \right\}
= \inf_{u \in \mathcal{V}} \left\{\mathcal{J}(u, z)\right\}$.
\begin{proof}
Since the sequence of minimizers is constructed in $\Vh$, we have for the discrete energy $\sup_{h>0} \{\widetilde{\mathcal{J}}_h(u_h, z_h)\} < \infty$.
By construction, the potential functions satisfy $\widetilde{\phi}_i(\mathbf{y},\widetilde{\mathbf{y}}) \geq \frac{\epsilon_\omega}{2}\|\mathbf{y}\|_2^2$, so for $C>0$ we have
\begin{align*}
    &R_{\theta,h}(u_h) 
    = \lambda_\theta \sum_{i=1}^{N_C} \int_{\mathcal{T}_h \times \T} \widetilde{\phi}_i(L_{h,i}[u_h](\x,t),\widetilde{L}_{h,i}[u_h](\x,t))\dxSh\\
    &\geq \lambda_\theta \frac{\epsilon_\omega}{2} \sum_{i=1}^{N_C} \int_{\mathcal{T}_h \times \T} 
      (\epsilon_\theta u_h(\x,t))^2 + |\gradsurf^h u_h(\x,t)|^2 + (K_{h,i}[u_h](\x,t))^2 \dxSh \\
    &\geq C \int_{\mathcal{T}_h \times \T} (u_h(\x,t))^2 + |\gradsurf^h u_h(\x,t)|^2 \dxSh
      = C \|u_h\|_{\mathcal{V}_h}^2\geq  \widetilde{C} \|u_h^l\|_{\Vcal}^2.
\end{align*}
By weak compactness, any bounded sequence in the Hilbert space $\Vcal$ has a 
weakly convergent subsequence. Hence, there exists a subsequence $u_h^l\overset{\Vcal}{\rightharpoonup} u^\infty$ and thus $u_h\overset{\mathcal{Z}}{\rightharpoonup} u^\infty$.
By the liminf-inequality in~\Cref{th:gamma_convergence}, we know that
\[
\mathcal{J}(u^\infty, z) \leq \liminf_{h \to 0} \widetilde{\mathcal{J}}_h(u_h, z_h) = \liminf_{h \to 0} \inf_{v_h \in \mathcal{V}_h}\left\{ \widetilde{\mathcal{J}}_h(v_h, z_h)\right\}.
\]
The corresponding upper bound follows directly from the limsup-inequality of~\Cref{th:gamma_convergence} by choosing a sequence $(\widetilde{u}_h)_h$ converging to a minimizer of $\mathcal{J}$
\[
\limsup_{h \to 0} \inf_{v_h \in \mathcal{V}_h} \left\{\widetilde{\mathcal{J}}_h(v_h, z_h) \right\}\leq \limsup_{h \to 0}  \widetilde{\mathcal{J}}_h(\widetilde{u}_h, z_h) = \inf_{v\in\Vcal} \left\{\mathcal{J}(v,z)\right\}.
\]
Finally, we conclude
\[
 \mathcal{J}(u^\infty, z)\leq \liminf_{h \to 0} \inf_{v_h \in \mathcal{V}_h} \left\{\widetilde{\mathcal{J}}_h(v_h, z_h)\right\} \leq \limsup_{h \to 0} \inf_{v_h \in \mathcal{V}_h} \widetilde{\mathcal{J}}_h(v_h, z_h) \leq \inf_{v\in\Vcal} \left\{\mathcal{J}(v,z)\right\}
\]
and therefore $u^\infty\in\argmin_{v\in\Vcal}\left\{\mathcal{J}(v,z)\right\}$.
\end{proof}
\end{theorem}
\begin{remark}
If $R_\theta$ is not assumed to be convex, then weak convergence in $\mathcal{V}$ is no longer sufficient to pass to the limit in the liminf inequality. 
In this case, strong convergence in $\mathcal{V}$ would be required. 
A natural approach to ensure this is to bound the minimizing sequence in a more regular space, for instance
\[
H^1(\T;H^{-1}(\Gamma_H)) \cap L^2(\T;H^2(\Gamma_H)),
\] 
whose norm could be added to the energy functional with a small weight~$(0<\epsilon \ll 1)$ to enforce additional regularity.
By Aubin--Lions lemma, this space embeds compactly into $\mathcal{V}$, thereby guaranteeing strong convergence of minimizing sequences in $\Vcal$.
\end{remark}

\section{Optimization}
This section presents the variational formulation and numerical minimization of the inverse problem and Gaussian denoising, together with the bi-level optimization approach used to learn the regularizer.

\subsection{Energy Minimization}
We train the multivariate FoE-type regularizer on Gaussian-denoised functions in order to learn a data-driven prior that captures the structural properties of epicardial potentials. 
The learned regularizer functional is expected to generalize to other inverse problems beyond pure denoising.
Let $v_h^m \in \Vh$, $m = 1,\dots,M$, denote noise-free inputs, scaled to the interval $[0,1]$. 
The corresponding noisy observations are defined by
\begin{equation*}
    z_h^m = v_h^m + \kappa^m n_h^m, \quad m = 1,\dots,M,
\end{equation*}
where $n_h^m \in \Vh$ is a noise realization whose nodal coefficient vector satisfies $\mathbf{n}^m \sim \mathcal{N}(0, \mathbf{I}_{N_\Vcal \times (N_\T+1)})$ and $\kappa^m > 0$ denotes the noise standard deviation.
The proximal operator solves the denoising problem with regularizer $R_{\theta,h}$
\begin{equation}\label{eq:prox_map}
    \prox_{R_{\theta,h}}(z_h) = \argmin_{u_h \in \Vh} \left\{\frac{1}{2}\|u_h - z_h\|_{\Yh}^2 + R_{\theta,h}(u_h)\right\}.
\end{equation}
In this formulation, Gaussian denoising corresponds to minimizing an energy functional of the same structure as in the inverse problem \eqref{eq:energy}, except that the original data fidelity term $G(u,z)$ is replaced by
$\widetilde{G}(u,z) \coloneqq \frac{1}{2}\|u - z\|_{\Ycal}^2$,
which measures the squared $L^2$-distance to the noisy input $z$.
Both the inverse problem \eqref{eq:energy} and the denoising problem \eqref{eq:prox_map} are minimized using an accelerated gradient descent method with restart, as proposed in~\cite{Du25} and summarized in~\Cref{alg:agd}.
We first compute the Fréchet derivatives of the functions and then discretize them, representing them as vector-valued quantities using nodal values and matrix forms, as summarized in~\cref{tab:agd_functions}.
For the regularizer function $R_\theta$, we restrict to only using the interpolated linear operators $\widetilde{L}_i$ for simplicity and define the linear operator $\widetilde{L}:u\mapsto (\widetilde{L}_1[u],\ldots, \widetilde{L}_{N_C}[u])$ and the vector valued derivative of the potential functions $\Phi(x_1,\ldots,x_{N_C}) = (\phi_1'(x_1),\ldots, \phi_{N_C}'(x_{N_C}))$ with
\begin{equation*}
    \phi_i'(\mathbf{y}) = \mu_i \Big(\mathrm{Proj}_{B_{\ell^1}}(\mathbf{y}/\mu_i) - \mathbf{Q}_i^\top \mathrm{Proj}_{B_{\ell^1}}(\mathbf{Q}_i \mathbf{y}/(\mu_i\eta_i)) + \epsilon_\omega (\mathbf{I}-\mathbf{Q}_i^\top \mathbf{Q}_i)\mathbf{y} \Big),
\end{equation*}
where $\|\mathbf{Q}_i\|_2=1$ ensures that $\phi_i'$ is Lipschitz continuous with constant $(1+\epsilon_\omega)$. 
Furthermore, each linear operator $\widetilde{L}_i^*$ is discretized by
\begin{equation*}
    \widetilde{\mathbf{L}}_i^*\mathbf{u}= (\epsilon_\theta \mathbf{I}_{N_\Vcal\times(S+1)} + \sum_{j=1}^d \mathbf{M}^{-1} (\mathbf{P}^{\text{sp}} \pmb{\nabla}_{\Gamma_H,j})^\top \mathbf{M} + \mathbf{D}^{-1} (\mathbf{P}^{\text{temp}} \mathbf{K}_i)^\top \mathbf{D} )\mathbf{u}
\end{equation*}
to be adjoint in the finite element function spaces.
The spatial and temporal mass matrices appear only in the components where the operator acts in space or time, respectively; if a component depends solely on space (resp. time), the corresponding temporal (resp. spatial) mass matrix does not appear.
\begin{table}[htbp]
\renewcommand{\arraystretch}{1.3}
    \centering
    \begin{tabular}{|c|c|}
    \hline
     Fréchet derivative & 
     Discretization \\
        \hline
          $\mathcal{D}\widetilde{G}(u,z)(\x,t) = (u-z)(\x,t)$ & $\nabla \widetilde{\mathbf{G}}_h(\mathbf{u},\mathbf{z}) = \mathbf{u} - \mathbf{z}$ \\
          \hline
          $\mathcal{D}G(u,z)(\x,t) = \widetilde{A}^* \widetilde{A}[u](\x,t) - \widetilde{A}^*[z](\x,t)$ &
   $ \nabla \mathbf{G}_h(\mathbf{u},\mathbf{z}) = \mathbf{M}^{-1}\widetilde{\mathbf{A}}^\top (\widetilde{\mathbf{A}}\mathbf{u} - \mathbf{z})$\\
   \hline
   $\mathcal{D}R_\theta(u)(\x,t) = \lambda_\theta \widetilde{L}^* \, \Phi(\widetilde{L}[u])(\x,t)$ & $\nabla \mathbf{R}_{\theta,h}(\mathbf{u}) = \lambda_\theta \widetilde{\mathbf{L}}^* \Phi(\widetilde{\mathbf{L}} \mathbf{u})$ \\
   \hline
    \end{tabular}
    \caption{Fréchet derivatives and their discrete approximations employed in optimization.}
    \label{tab:agd_functions}
\end{table}
To determine the step size for~\Cref{alg:agd}, we compute the Lipschitz constants of the derivatives. We estimate the largest eigenvalue of $\widetilde{\mathbf{L}}^* \widetilde{\mathbf{L}}$ via the power method:
\begin{equation*}
    \mathbf{u}_{k+1} = \frac{\widetilde{\mathbf{L}}^* \widetilde{\mathbf{L}}\mathbf{u}_k}{\|\widetilde{\mathbf{L}}^* \widetilde{\mathbf{L}} \mathbf{u}_k\|_2}, \quad
    \lambda_{\max}(\widetilde{\mathbf{L}}^* \widetilde{\mathbf{L}}) = \|\widetilde{\mathbf{L}}^* \widetilde{\mathbf{L}} \mathbf{u}_{N_{\lambda_{\max}}}\|_2,
\end{equation*}
with maximum iteration $N_{\lambda_{\max}}\in\mathbb{N}$.
The Lipschitz constant of the derivative $\mathcal{D}(\widetilde{G}+R_\theta)$ determines the step sizes in the accelerated gradient descent algorithm and is computed by $\mathcal{L} = 1 + (1+\epsilon_\omega) \, \lambda_{\max}(\widetilde{\mathbf{L}}^* \widetilde{\mathbf{L}})$.
When computing reconstructions to the inverse problem, the Lipschitz constant $\mathcal{D}G$ is computed by $\lambda_{\max}(\widetilde{\mathbf{A}}^* \widetilde{\mathbf{A}})$.

\begin{algorithm}[tb]
\caption{Accelerated gradient descent with objective-based restart}\label{alg:agd}
\begin{enumerate}
    \item \textbf{Initialization:} Set $\mathbf{u}^0 = \mathbf{0}$, observation $\mathbf{z}$, Lipschitz constant $\mathcal{L}$ of $\mathcal{D}(\widetilde{G}+R_\theta)$, $\tau_0 = 1$, $f^0 = \infty$, $\mathbf{v}^0 = \mathbf{u}^0$.
    \item \textbf{Iteration} ($n \ge 0$): Compute
    \begin{align*}
    \begin{cases}
        \mathbf{u}^{n+1} = \mathbf{v}^n - \mathcal{L}^{-1} \nabla (\widetilde{\mathbf{G}}_h(\mathbf{v}^n,\mathbf{z}) + \mathbf{R}_{\theta,h}(\mathbf{v}^n)),\quad
        \tau_{n+1} = \frac{1 + \sqrt{1 + 4 \tau_n^2}}{2},\\
        \mathbf{v}^{n+1} = \mathbf{u}^{n+1} + \frac{\tau_n - 1}{\tau_{n+1}} (\mathbf{u}^{n+1} - \mathbf{u}^n),\quad
        f^{n+1} = \widetilde{\mathbf{G}}_h(\mathbf{u}^n,\mathbf{z}) + \mathbf{R}_{\theta,h}(\mathbf{u}^n)
        \end{cases}
    \end{align*}
    If $f^{n+1} > f^n$, reset: $\mathbf{v}^{n+1} = \mathbf{u}^{n+1}, \tau_{n+1} = 1$.
\end{enumerate}
\end{algorithm}

\subsection{Learning the Regularizer}
For regularizer generalization, we model $R_\theta$ as a function of the noise level $\kappa$ via the weighting parameters $\mu_i(\kappa)$ in~\eqref{eq:potential_function}. 
Further details are provided in~\cite{Du25}.
The regularizer is trained using the loss
\begin{equation*}
    \frac{1}{M}\sum_{m=1}^M \frac{1}{\sqrt{\kappa^m}}\|v_h^m - \prox_{R_{\theta,h}}(z_h^m)\|_{\Yh},
\end{equation*}
This results in a bi-level optimization problem: the inner problem optimizes $u_h\in\Vh$ by minimizing the energy, while the outer problem updates the parameters $\theta$ of the multivariate FoE regularizer~\eqref{eq:FoE_multivariate}. These parameters include the global weights $\lambda_\theta$ and $\epsilon_\theta$, the temporal kernel functions $k_i$, and the parameters of the potential functions $\phi_i$, which are represented by a neural network.
We compute the prior via denoising because the proximal operator of a regularizer effectively encodes the typical structure of noise-free functions: by learning $R_\theta$ such that $\prox_{R_\theta}(z_h)$ removes Gaussian noise, the regularizer captures the statistical properties of the clean functions $v_h^m$. 
Once trained, the learned prior can be transferred to other inverse problems, providing robust guidance in situations where direct inversion is unstable.

For the optimization of the neural network parameters, we implement the model in PyTorch~\cite{Pa19} and employ implicit differentiation to efficiently compute gradients through the equilibrium point defined by the model. 
Specifically, we use the \texttt{torchdeq} library~\cite{Ge23}, which provides scalable tools for deep equilibrium models, and adopt the Broyden algorithm as the fixed-point solver. 
This quasi-Newton method enables fast and memory-efficient convergence to the fixed point without requiring explicit backpropagation through all iterative solver steps. 
For optimizing the neural network parameters, we use the ADAM optimizer.
The training is performed on an NVIDIA A40 GPU.

\section{Results}
We train and benchmark learned regularizer approaches for denoising and the inverse problem on 2D simulations, comparing them with state-of-the-art handcrafted finite element regularizers, including spatiotemporal first-order Tikhonov and TV.

\subsection{Dataset Generation}
We simulate $1000$ epicardial potential fields on a 2D torso--heart model~\cite{Ga21} illustrated in~\Cref{fig:lifting}, which includes the lungs. The data are randomly divided into $80\%$ for training, $10\%$ for validation, and $10\%$ for testing.
Synthetic data is generated by simulating cardiac activity on a finer 2D heart mesh with a finite element reaction--diffusion model and interpolating to the coarser torso-heart model via nearest-neighbor interpolation to avoid inverse crimes.
The transmembrane potential $v : \Omega_0 \times T \to \mathbb{R}$ is computed as proposed in~\cite{Po06}
\[
C_m \partial_t v - \frac{1}{\beta} \nabla (\mathbf{G}_m \nabla v) + I_{\mathrm{ion}}(v) = I_{\mathrm{stim}}, \quad 
I_{\mathrm{stim}}(\mathbf{x},t) =
\begin{cases}
I_{\max} \mathbf{1}_{\text{stim}}(\mathbf{x}), & 0 \le t < I_{\mathrm{dur}}, \\
0, & t \geq I_{\mathrm{dur}},
\end{cases}
\]
with spatial indicator function for the randomly localised stimulus region $\mathbf{1}_{\text{stim}}$ and element-wise anisotropic intra- and extracellular conductivities $\mathbf{G}_{i}$ and $\mathbf{G}_{e}$
\[
\mathbf{G}_{\alpha} = \sigma_{\alpha,t} \mathbf{I} + (\sigma_{\alpha,l} - \sigma_{\alpha,t}) \mathbf{l} \otimes \mathbf{l}, \quad \alpha \in \{i,e\}, \quad \mathbf{G}_m = \mathbf{G}_i (\mathbf{G}_i + \mathbf{G}_e)^{-1} \mathbf{G}_e,
\]
where $\mathbf{l}$ is the local fiber direction, which in the heart model is taken to be circumferential around the ventricles, reflecting the typical orientation of myocardial fibers. 
The conductivities are computed following~\cite{Ro02} by fixing $\sigma_{i,l}$ and determining the remaining components as functions of $\lambda_{\mathrm{LT}},\alpha$, and $\varepsilon$:
\begin{equation*}
    \sigma_{i,t}=\frac{\sigma_{i,l}}{\lambda_{\mathrm{LT}}^2}\left(\frac{1+\alpha(1-\varepsilon)}{1+\alpha}\right), \quad \sigma_{e,t}= \frac{\sigma_{i,t}}{\alpha(1-\varepsilon)}, \quad \sigma_{e,l}= \frac{\sigma_{i,l}}{\alpha}.
\end{equation*}
With probability $1/3$, a randomly localized region of scar tissue with a prescribed radius is introduced by reducing the conductivity tensor $\mathbf{G}_m$ by a factor drawn uniformly from $[0.05,0.25]$ and with probability $1/6$, a second scar region is added.
Ionic currents follow a Nagumo-type cubic model without repolarization,
\[
I_{\mathrm{ion}}(v) = g_{\max} (v - V_{\mathrm{rest}})(v - V_{\mathrm{th}})(v - V_{\mathrm{dep}}),
\]
with fixed depolarization, resting, and threshold potentials. 
Time integration uses backward Euler with $\Delta t$, yielding
\[
\left(C_m \mathbf{M} + \frac{\Delta t}{\beta} \mathbf{K}_m\right) \mathbf{v}^{n+1} = \mathbf{C} \left(C_m \mathbf{v}^n - \Delta t (I_{\mathrm{ion}}\left(\mathbf{v}^n\right) - I_{\mathrm{stim}})\right),
\]
where $\mathbf{K}_m$ corresponds to the finite element discretization of 
$\nabla \cdot (G_m \nabla\,\cdot)$, $\mathbf{C}$ is the associated mass matrix, 
and $\mathbf{v}$ contains the nodal values.
The extracellular potential $v_e$ is computed via the pseudo bidomain model~\cite{Bi11} for each $n_\mathrm{sample}$-th timestep 
\[
(\mathbf{K}_i + \mathbf{K}_e + \eta \mathbf{M}) \mathbf{v}_e = - \mathbf{K}_i \mathbf{v}\quad \text{with} \quad \eta = 10^{-9}.
\]
\Cref{fig:2D_gen_data} illustrates a simulated epicardial potential that is part of the dataset, including scar tissue. 
The simulation parameters used to generate this dataset are summarized in \Cref{tab:dataset_params}.
\begin{figure}[ht]
\centering
\begin{tikzpicture}
    \node[anchor=south west] (fig1) at (0,-0.3) {\includegraphics[width=0.65\textwidth]{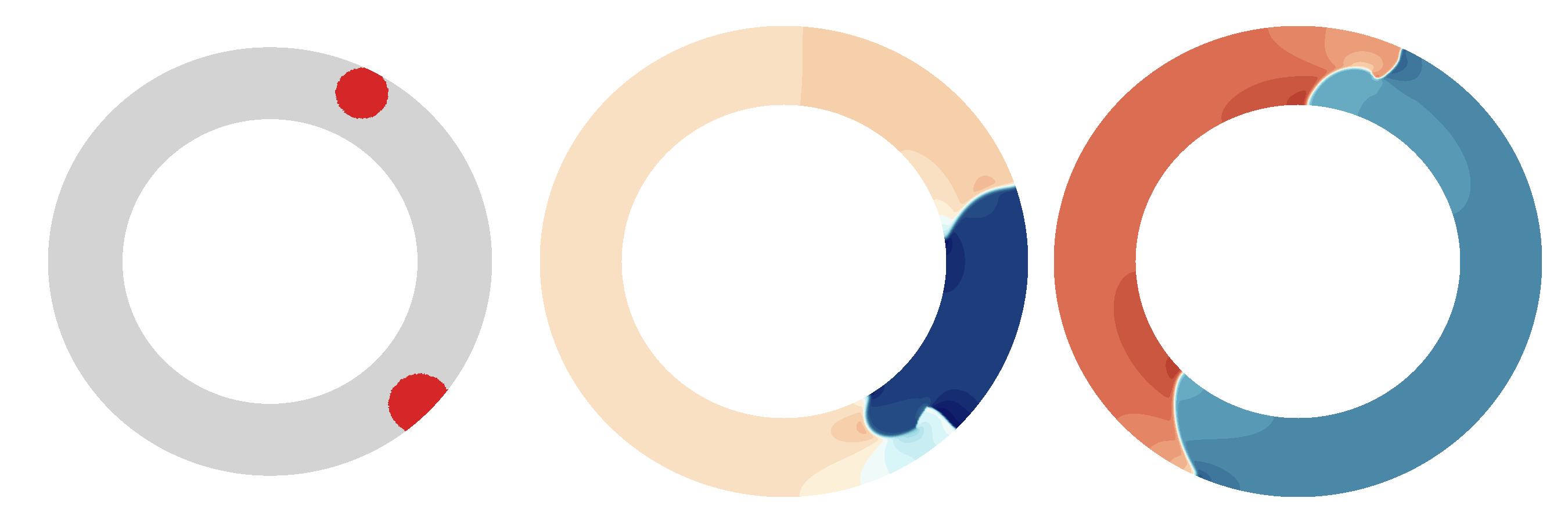}};
\foreach \i/\labelname in { 2/$t_1=40.3$ms, 3/$t_2=87.9$ms} {
    \pgfmathsetmacro\x{0.18 + (\i-1)*0.33} 
    \pgfmathsetmacro\y{3.5}                 
    \node[fill=white, font=\small, draw=black, line width=0.6pt] at (\x*0.65\textwidth-\pgflinewidth,-0.3+\y) {\labelname};
}

\pgfplotsset{every axis/.append style={tick label style={font=\footnotesize}}}
        \begin{axis}[
        at={(9.5cm,0)},           % Shift right of left figure
        anchor=south west,
        width=3.75cm,
        height=5cm,
        enlargelimits=false,
        xtick={2,4,6},
        ytick={0,50,100,150},
        ytick align=outside,
        ytick pos=left,
        xtick align=outside,
        xtick pos=bottom,
        clip=false
    ]
        \addplot graphics [xmin=0, xmax=6.2831853, ymin=0, ymax=159.78]
            {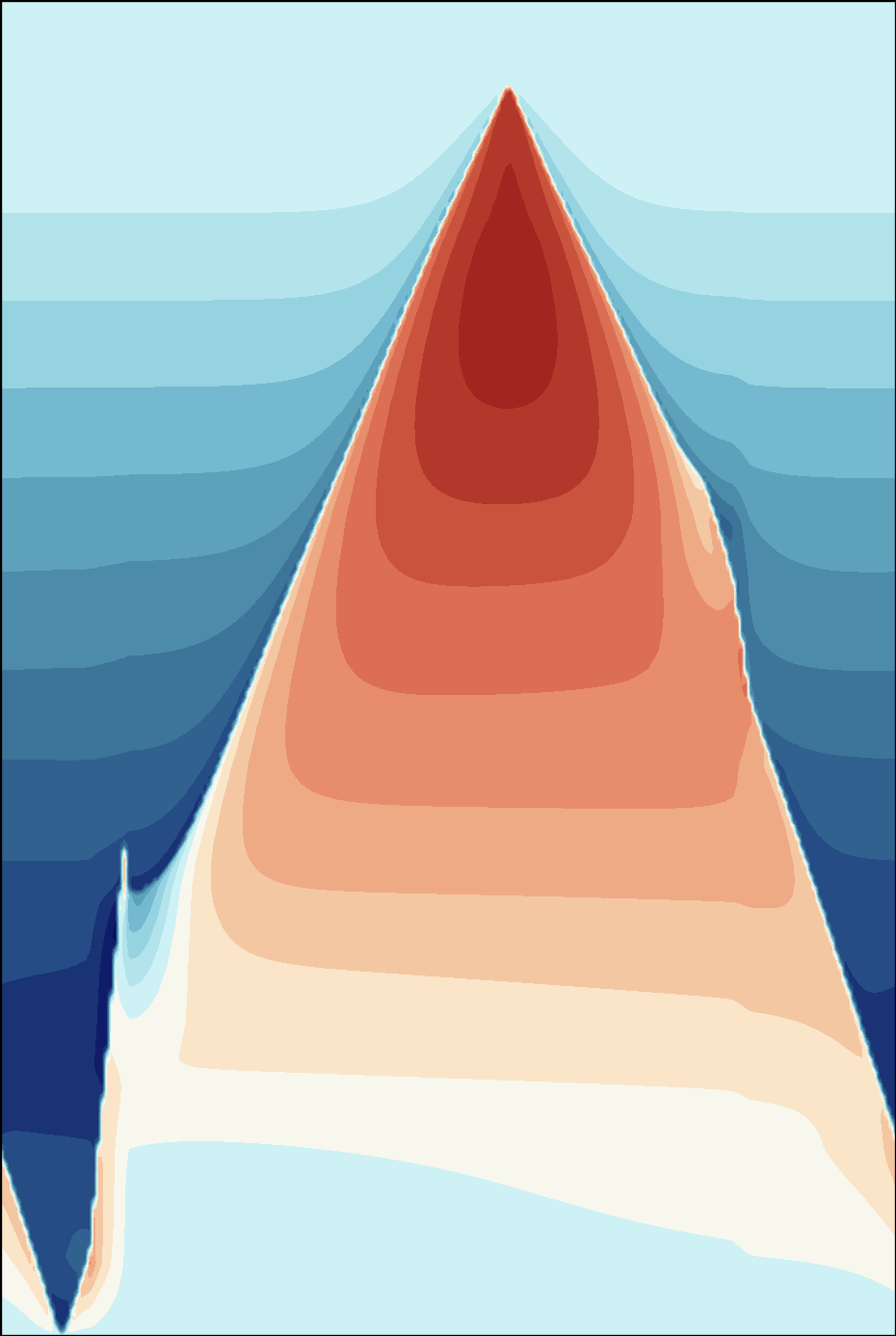};

    \end{axis}
\node[rotate=90, anchor=south, font=\footnotesize] 
    at (8.9cm,1.5cm) {Time $t$ [ms]};
\node[rotate=90, anchor=south, font=\scriptsize] 
    at (13cm,1.7cm) {Extracellular potential [mV]};

\node[anchor=base, font=\footnotesize] 
    at (9.5cm+1.15cm,-0.7cm) {Angle [rad]};
\begin{axis}[
    at={(11.7cm + 0.05cm,0)}, 
    anchor=origin,
    width=1.85cm,
    height=5cm,
    axis x line=none,
    ytick pos=right,
    ytick={0,0.2,0.4,0.6,0.8,1},
    enlargelimits=false
]
    \addplot graphics [xmin=0, xmax=1, ymin=0, ymax=1]
        {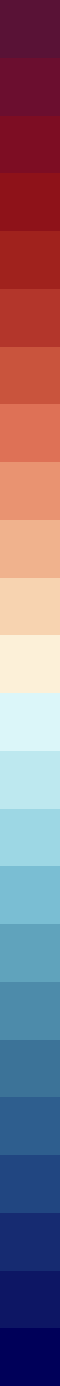};
\end{axis}

\draw[green, line width=2pt] (7.12,1.242) circle (1.3cm);
\draw[green, line width=2pt] (9.5,0) -- (11.68,0);
\draw[->, black, thick, bend right=20] (7.12,-0.3) to (9.8,-0.3);

\draw[red, line width=1.2pt] (7.5,2.28) circle (0.25cm);
\draw[red, line width=1.2pt] (5.15,0.36) circle (0.25cm);
\draw[red, line width=1.2pt] (11.2,2) circle (0.25cm);
\draw[red, line width=1.2pt] (9.85,1.05) circle (0.25cm);

\draw[red, line width=1.2pt] (3.3,-0.6) circle (0.2cm) node[right=6pt, black] {\footnotesize Effects of scar tissue};

\draw[red, line width=1.2pt, fill] (0.7,-0.6) circle (0.2cm) node[right=6pt, black] {\footnotesize Scar tissue};

\draw[line width=0.6pt] (0.3,-0.3) rectangle (6.5,-0.9);

\end{tikzpicture}
\caption{Extracellular potential $v$ on the myocardium at three time steps with scar tissue, and a spacetime plot on the epicardium normalized to $[0,1]$. Reduced conductivity in the scar region deforms the characteristic spike-shaped potentials and slows propagation.}
\label{fig:2D_gen_data}
\end{figure}
\begin{table}[htbp]
    \centering
    \begin{footnotesize}
    \begin{tabular}{|c|c|c|c|c|c|c|c|}
    \hline
     $V_{\mathrm{rest}}$ & 
     $V_{\mathrm{dep}}$ & $V_{\mathrm{th}}$ & $g_{\max}$ &$\sigma_{i,l}$ & $\lambda_{\mathrm{LT}}$~\cite{Ro02} &$\varepsilon$~\cite{Ro02} \\
        \hline
          $-85$mV & $30$mV & $-55$mV &$1.4\times 10^{-3}$& $3$S/m & $[2.16,2.84]$ &  $[0.58,0.93]$ \\
          \hline
          $\alpha$~\cite{Ro02} & $C_m$~\cite{Po06} & $\beta$~\cite{Po06} & $I_{\max}$& $I_{\mathrm{dur}}$ & $\Delta t$ & $n_\mathrm{sample}$ \\
          \hline
          $1$ & $1\mu\mathrm{F}/\mathrm{cm}^2$ & $100\mathrm{cm}^{-1}$ & $1.2\mu\mathrm{A}/\mathrm{cm}^2$ &$100$ms & $[0.07,0.12]$ms & $[7,13]$\\
          \hline
    \end{tabular}
    \end{footnotesize}
    \caption{Model parameters and sampling ranges used for the generation of the synthetic dataset. 
    Parameters are drawn uniformly at random from the indicated intervals.}
    \label{tab:dataset_params}
\end{table}

\subsection{Baseline Methods}
As handcrafted baselines, we consider spatiotemporal first-order Tikhonov (\textbf{TIK}) regularization,
\[
    \widetilde{G}(u,z) + \frac{1}{2}\norm{\Lambda\nabla_{(\x,t)}u}_{\Ycal^d}^2,
\]
where \(\Lambda = \mathrm{diag}(\lambda_{\gamma},\ldots,\lambda_{\gamma},\lambda_t)\in\R^{(d+1)\times(d+1)}_+\), and \(\lambda_{\gamma}\) and \(\lambda_t\) denote the spatial and temporal regularization parameters, respectively.  
The optimization problem is solved by applying the conjugate gradient method to the optimality system.
In addition, we compare against isotropic spatiotemporal total variation (\textbf{TV}) regularization \cite{Ha25},
\[
    \widetilde{G}(u,z) + \int_{\Gamma_H\times\T}\norm{\Lambda\nabla_{(\x,t)} u(\x,t)}_2 \dxS,
\]
which is minimized using a first-order primal--dual algorithm.
All baseline methods are computed using lumped mass matrices, as this significantly reduces computation time while producing visually and $L^2$-error-wise negligible differences.

\subsection{Denoising}
We first evaluate the FoE models on Gaussian denoising of epicardial potentials, where the ground truth (\textbf{GT}) functions are corrupted with Gaussian noise of standard deviation $\kappa$, matching the noise used during training. This section highlights the model's ability to reconstruct clean signals from noisy observations, motivating their use as priors for more general inverse problems.
Finally, we evaluate the proposed spatiotemporal FoE–type regularizer in both a convex (\textbf{CMFoE}) and a non-convex (\textbf{MFoE}) formulation. 
The convex version ensures theoretical guarantees and predictable behavior, while the non-convex version offers greater flexibility to capture complex signal correlations and potentially improve empirical performance.
The parameters used for tuning the models are the input noise level $\kappa$ and the learned weighting parameter $\lambda_\theta$.
To increase the range of the filters, we compute each filter as a composition of three filters defined on a smaller interval, increasing the number of output channels for each cross-correlation, i.e. $k_i(t) = \int_{\mathbb{R}^2} k_i^3(s)k_i^2(\tau)k_i^1(\tau+s+t)\dx (s,\tau)$ for $k_i^j\in L^1(\widetilde{\T})$ and $k_i^j=0$ on $\mathbb{R}\setminus \widetilde{\T}$.
The piecewise affine kernel functions $k_{h_i}$ learned from Gaussian denoising, with each $k^j_{h,i}$ defined on $\widetilde{\T}=[t_{-2},t_2]$ are shown in~\Cref{fig:kernels}.
The models are trained for $10000$ iterations with a learning rate of $0.005$ decaying by $0.75$ after $2500$ iterations and a batch size of $1$ due to the varying number of temporal values of the nodal vectors.

\begin{figure}[htbp]
\centering
\begin{tikzpicture}

\def\imgwidth{5.5}  
\def\imgheight{0} 
\def\labelgap{1.1} 
\def\titleshift{0}   
\def\nums{{3.0,2.2,1.4,0.6}}

\node[anchor=north west] at (0,0) {\includegraphics[width=\imgwidth cm]{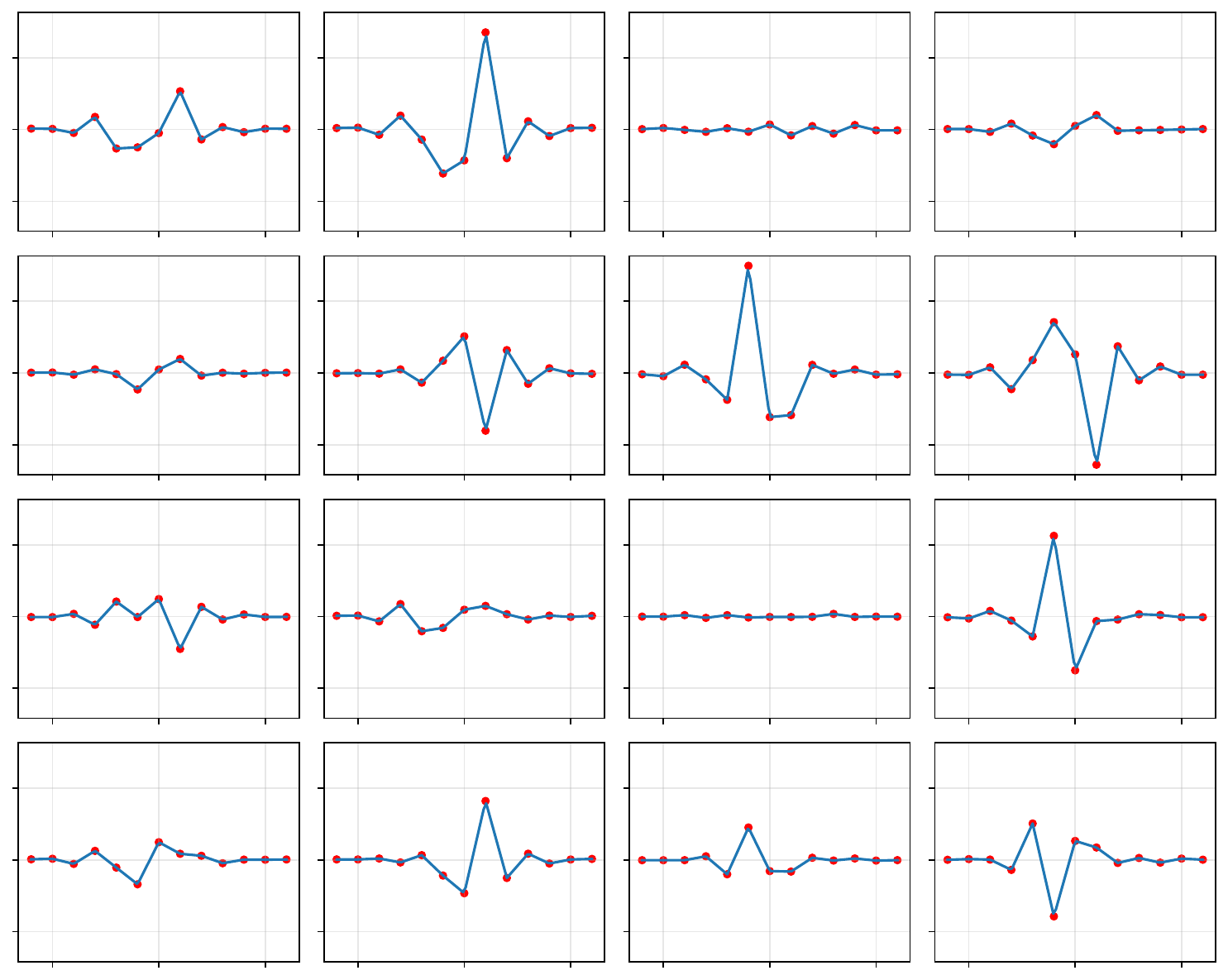}};

    \node[anchor=south] at (0+0.5*\imgwidth, \imgheight+\titleshift) {\textbf{CMFoE}};
    \foreach \j in {0,1,2,3} {
    \foreach \i in {-5,0,5} {
        \node[anchor=north west] at (0.61+\i*0.094+\j*1.38 +0,-\imgwidth cm + \labelgap cm) {\scriptsize $t_{\i}$};
}
}

\node[anchor=north west] at (6,0) {\includegraphics[width=\imgwidth cm]{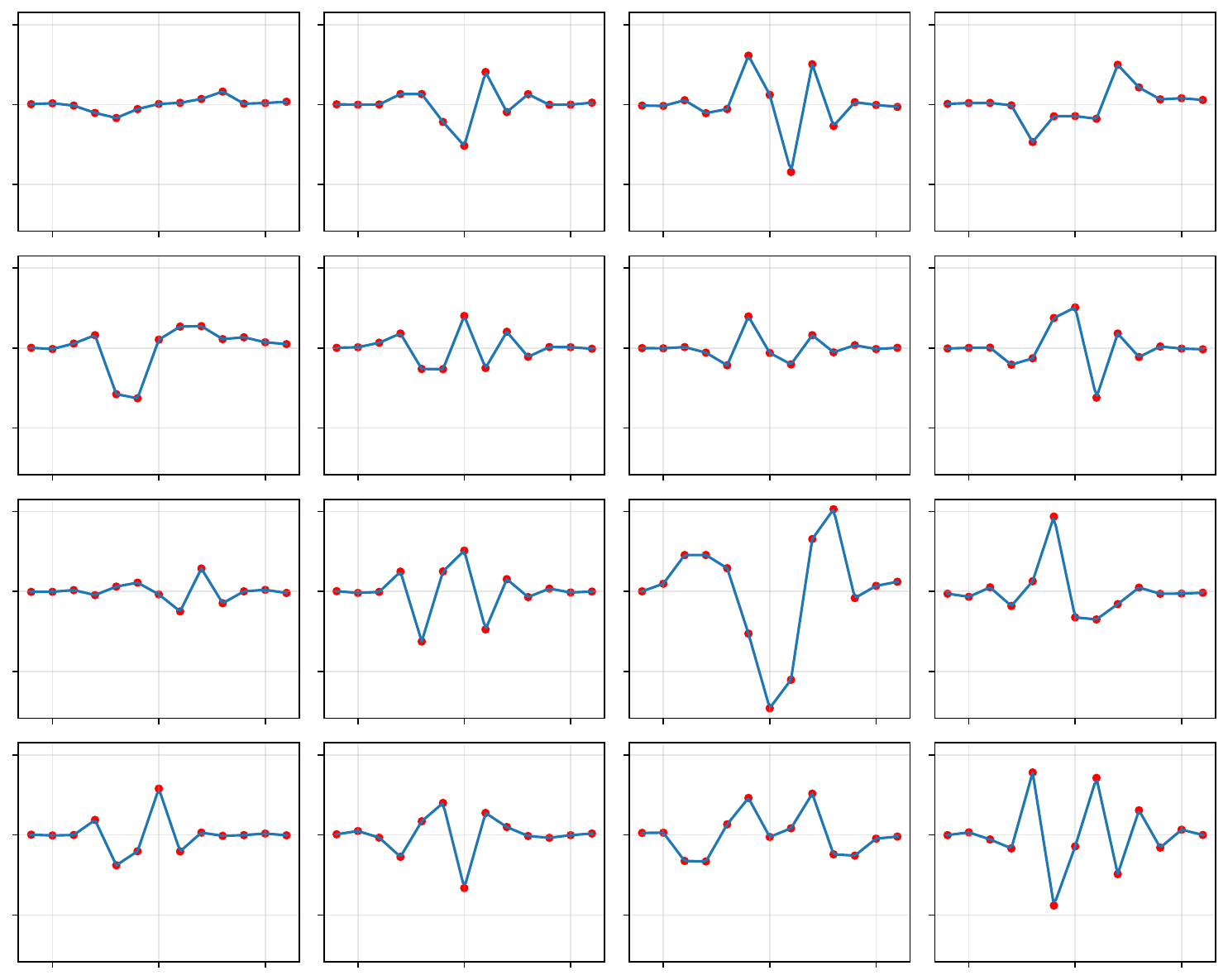}};

    \node[anchor=south] at (6+0.5*\imgwidth, \imgheight+\titleshift) {\textbf{MFoE}};
    \foreach \j in {0,1,2,3} {
    \foreach \i in {-5,0,5} {
        \node[anchor=north west] at (0.61+\i*0.094+\j*1.38 +6,-\imgwidth cm + \labelgap cm) {\scriptsize $t_{\i}$};
}
}

\foreach \j in {0,1,2,3} {
    \foreach \i in {2,0,-2} {
        \node[anchor=north east] at (0.2,-0.5 +\i*0.16-\j*1.093) {\scriptsize $\i$};
    }
}

\foreach \j in {0,1,2,3} {
    \foreach \i in {2,0,-2} {
        \node[anchor=north east] at (6+0.2,-0.39 +\i*0.18-\j*1.093) {\scriptsize $\i$};
    }
}

\end{tikzpicture}
\caption{Learned kernel functions $(k_{h,i})_{i=1}^{N_C}$ for $N_C=16$ of the FoE approaches by denoising.}
\label{fig:kernels}
\end{figure}

\Cref{fig:2D_model_denoising} illustrates representative denoising results obtained with the different regularization approaches for noise level $\kappa=0.2$.
In~\Cref{tab:2D_table_denoising}, errors for denoising across multiple different noise levels on the test set are computed by tuning the parameters on the validation set first.
As expected, $\mathbf{TIK}$ regularization produces overly smooth reconstructions: while it effectively reduces noise, it also blurs sharp features and fails to preserve edges, resulting in a loss of fine structural details.
In contrast, $\mathbf{TV}$ regularization yields reconstructions with significantly sharper edges. 
Discontinuities and piecewise constant regions are well preserved.
The spatiotemporal $\mathbf{CMFoE}$ and $\mathbf{MFoE}$ regularizers yield the best denoising performance with significant improvement for the nonconvex case, effectively suppressing Gaussian noise while preserving sharp spatial and temporal features.
Its learned filters adapt better to the noise characteristics than handcrafted regularizers, producing reconstructions that are visually and quantitatively closer to the ground truth.

\begin{figure}[ht]
\centering
\begin{tikzpicture}
\pgfplotsset{every axis/.append style={tick label style={font=\footnotesize}}}

% === First axis: GT ===
\begin{axis}[
    at={(0cm,0)},
    anchor=origin,
    width=3.7cm,
    height=5.5cm,
    enlargelimits=false,
    xtick={2,4,6},
    ytick={0,50,100,150},
    ytick align=outside,
    ytick pos=left,
    xtick align=outside,
    xtick pos=bottom,
    clip=false
]
    \addplot graphics [xmin=0, xmax=6.2831853, ymin=0, ymax=145]{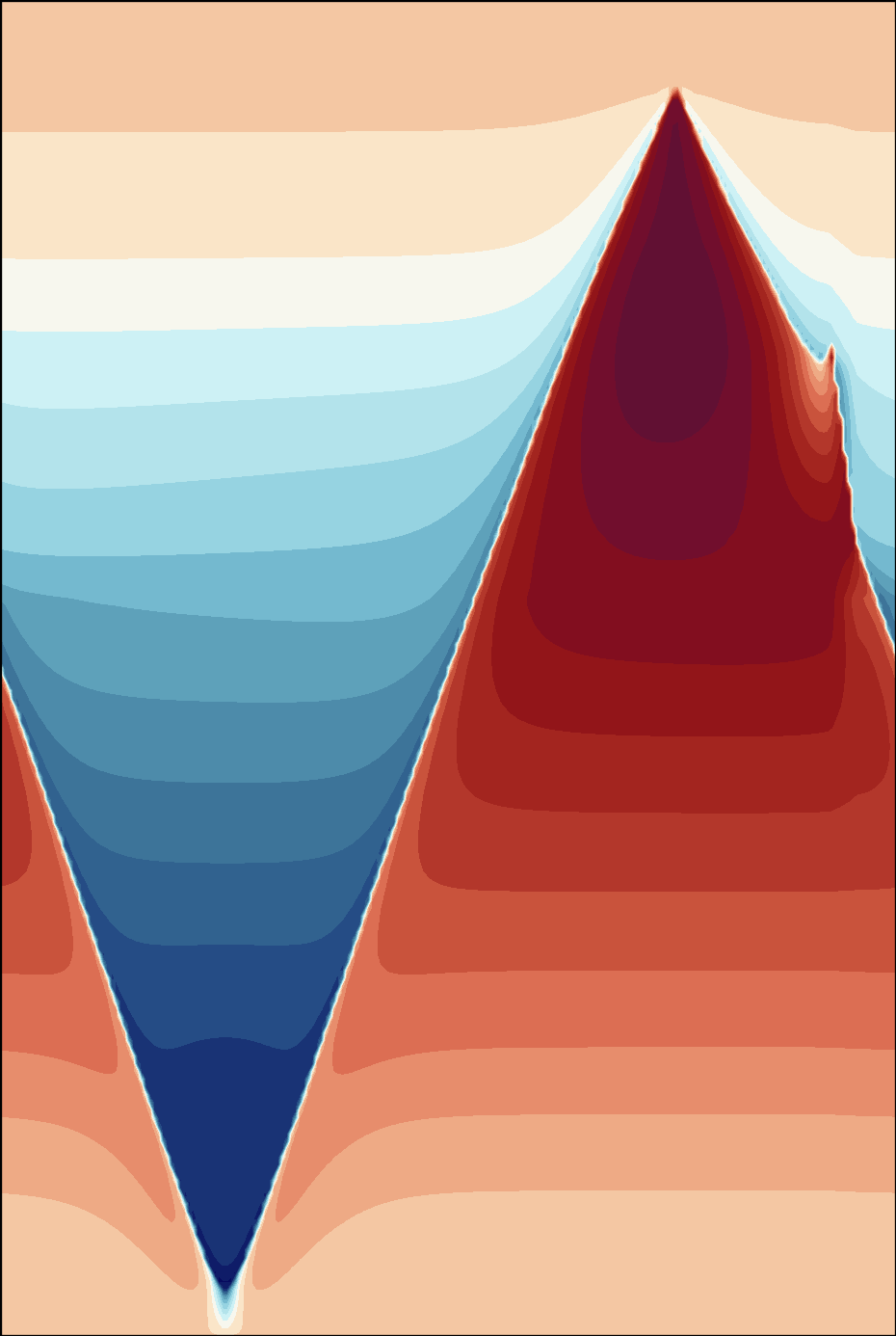};
    \node[anchor=base,font=\small] at (rel axis cs:0.5,1.15) {\textbf{GT}};
    \node[anchor=base,font=\footnotesize] at (rel axis cs:0.5,1.05) {$L^2$-error:};
    \node[anchor=base,font=\footnotesize] at (rel axis cs:0.5,-0.2) {Angle [rad]};
\end{axis}

% === Second axis: TIK ===
\begin{axis}[
    at={(2.15cm,0)},
    anchor=origin,
    width=3.7cm,
    height=5.5cm,
    enlargelimits=false,
    xtick={2,4,6},
    ytick=\empty,
    xtick align=outside,
    xtick pos=bottom,
    axis y line=none,
    clip=false
]
    \addplot graphics [xmin=0, xmax=6.2831853, ymin=0, ymax=145]{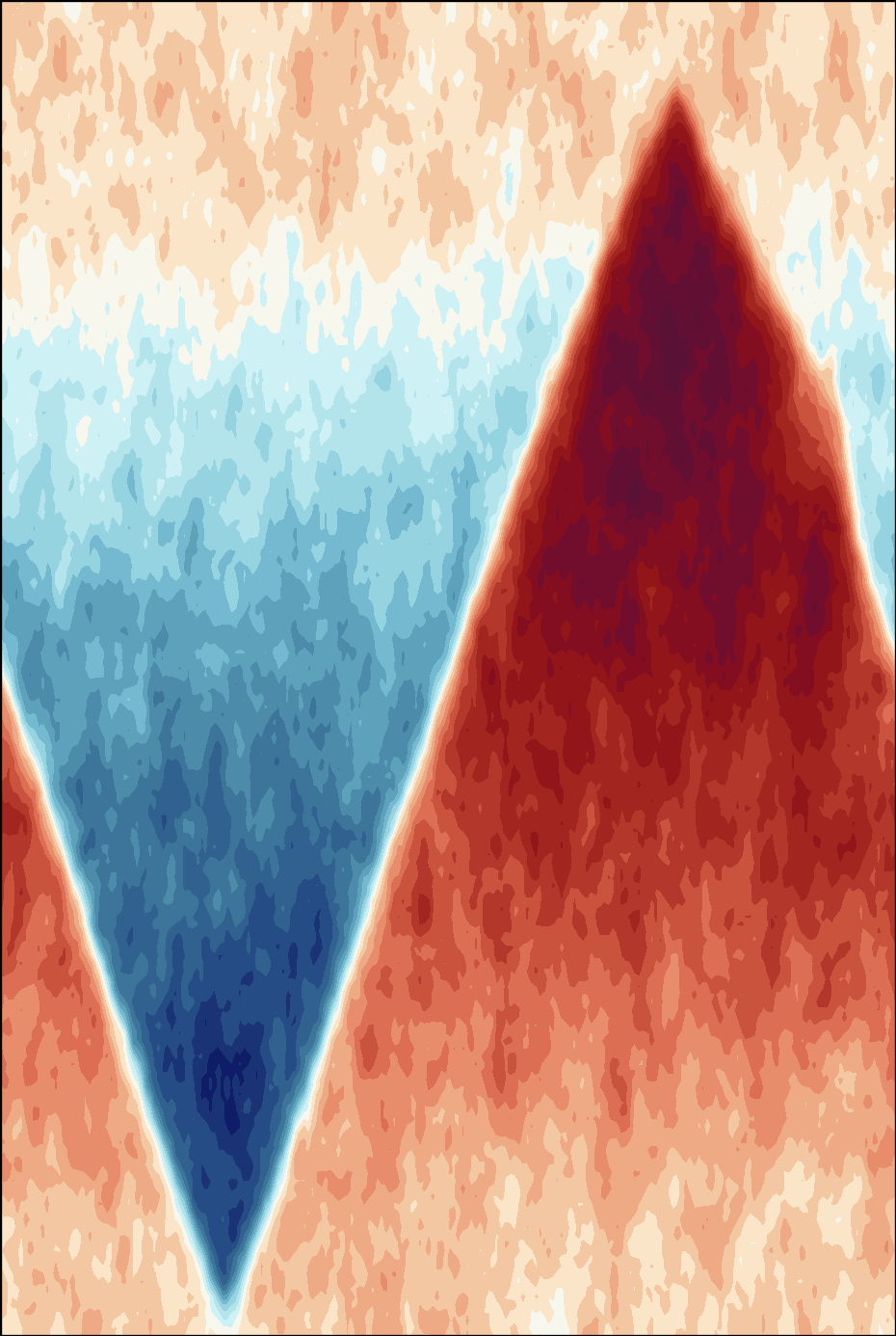};
    \node[anchor=base,font=\small] at (rel axis cs:0.5,1.15) {\textbf{TIK}};
    \node[anchor=base,font=\footnotesize] at (rel axis cs:0.5,1.05) {7.26};
    \node[anchor=base,font=\footnotesize] at (rel axis cs:0.5,-0.2) {Angle [rad]};
\end{axis}

% === Third axis: TV ===
\begin{axis}[
    at={(4.3cm,0)},
    anchor=origin,
    width=3.7cm,
    height=5.5cm,
    enlargelimits=false,
    xtick={2,4,6},
    ytick=\empty,
    xtick align=outside,
    xtick pos=bottom,
    axis y line=none,
    clip=false
]
    \addplot graphics [xmin=0, xmax=6.2831853, ymin=0, ymax=145]{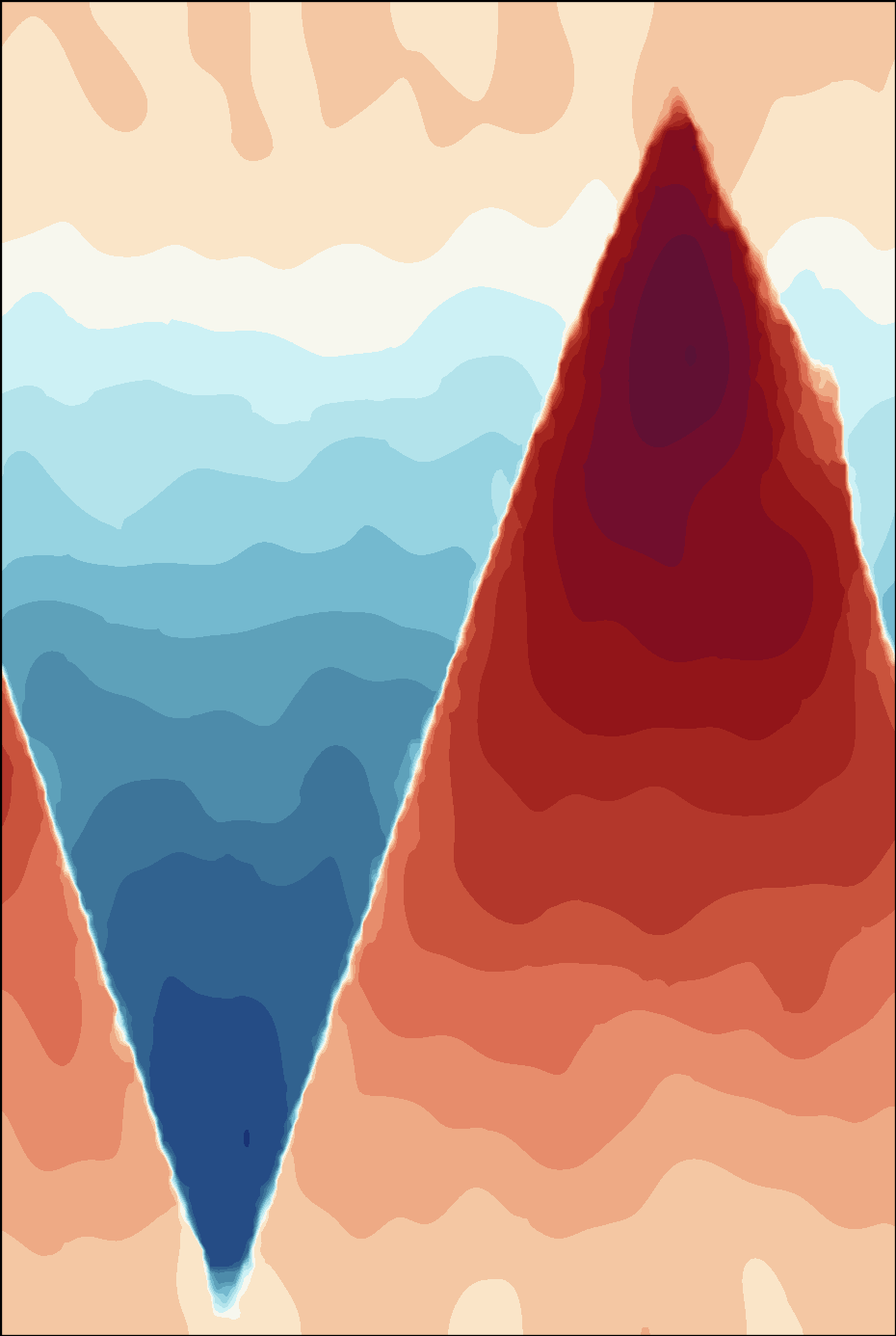};
    \node[anchor=base,font=\small] at (rel axis cs:0.5,1.15) {\textbf{TV}};
    \node[anchor=base,font=\footnotesize] at (rel axis cs:0.5,1.05) {4.51};
    \node[anchor=base,font=\footnotesize] at (rel axis cs:0.5,-0.2) {Angle [rad]};
\end{axis}

% === Fourth axis: CMFoE ===
\begin{axis}[
    at={(6.45cm,0)},
    anchor=origin,
    width=3.7cm,
    height=5.5cm,
    enlargelimits=false,
    xtick={2,4,6},
    ytick=\empty,
    xtick align=outside,
    xtick pos=bottom,
    axis y line=none,
    clip=false
]
    \addplot graphics [xmin=0, xmax=6.2831853, ymin=0, ymax=145]{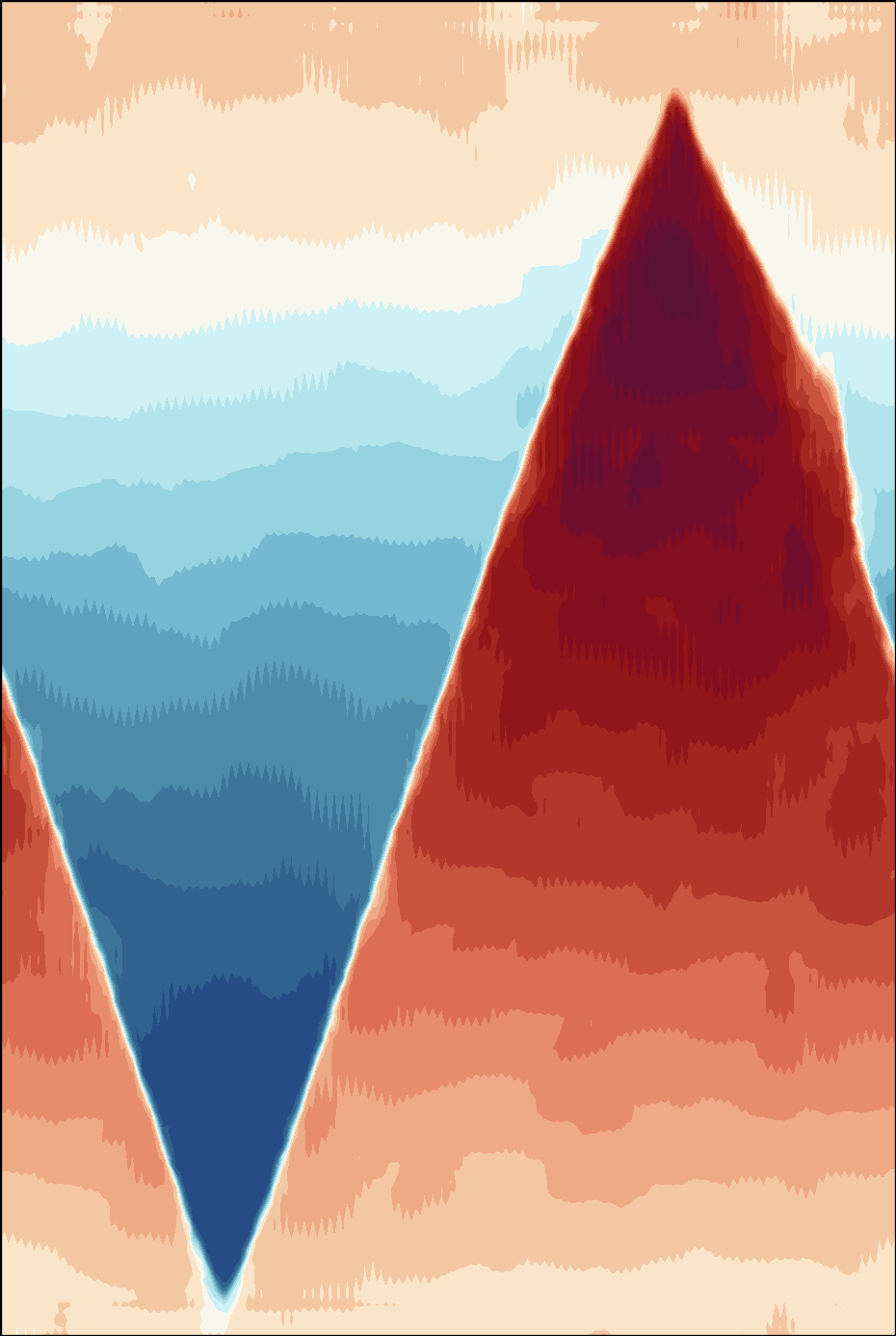};
    \node[anchor=base,font=\small] at (rel axis cs:0.5,1.15) {\textbf{CMFoE}};
    \node[anchor=base,font=\footnotesize] at (rel axis cs:0.5,1.05) {3.97};
    \node[anchor=base,font=\footnotesize] at (rel axis cs:0.5,-0.2) {Angle [rad]};
\end{axis}

% === Fifth axis: MFoE ===
\begin{axis}[
    at={(8.6cm,0)},
    anchor=origin,
    width=3.7cm,
    height=5.5cm,
    enlargelimits=false,
    xtick={2,4,6},
    ytick=\empty,
    xtick align=outside,
    xtick pos=bottom,
    axis y line=none,
    clip=false
]
    \addplot graphics [xmin=0, xmax=6.2831853, ymin=0, ymax=145]{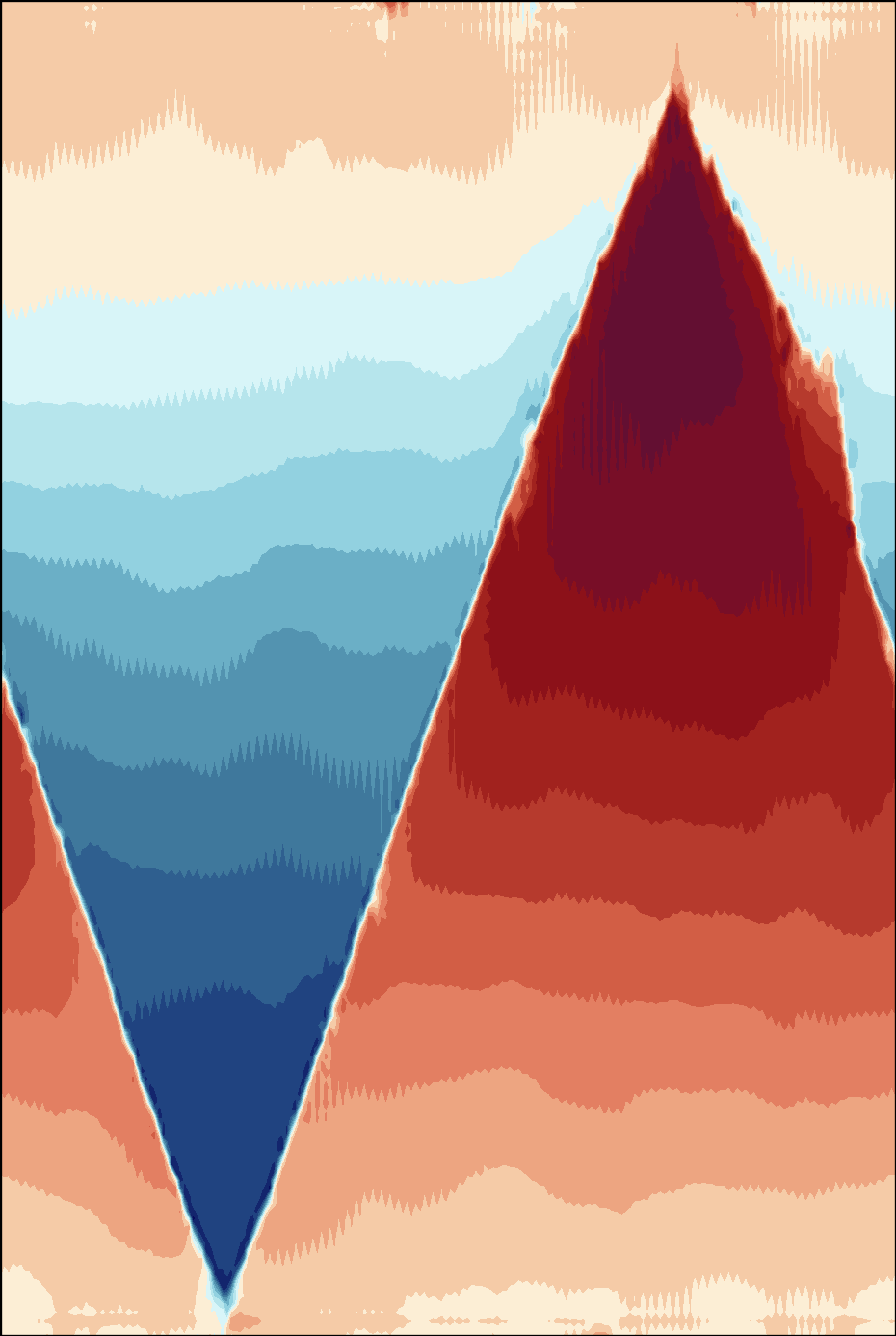};
    \node[anchor=base,font=\small] at (rel axis cs:0.5,1.15) {\textbf{MFoE}};
    \node[anchor=base,font=\footnotesize] at (rel axis cs:0.5,1.05) {\textbf{3}};
    \node[anchor=base,font=\footnotesize] at (rel axis cs:0.5,-0.2) {Angle [rad]};
\end{axis}

% === Y-axis label ===
\node[rotate=90, anchor=south, font=\footnotesize] at (-0.5cm,1.8cm) {Time $t$ [ms]};

% === Right y-axis label ===
\node[rotate=90, anchor=south, font=\scriptsize] at (12cm,2cm) {Extracellular potential [mV]};

% === Colorbar ===
\begin{axis}[
    at={(10.75cm,0)}, 
    anchor=origin,
    width=1.85cm,
    height=5.5cm,
    axis x line=none,
    ytick pos=right,
    ytick={0,0.2,0.4,0.6,0.8,1},
    enlargelimits=false
]
    \addplot graphics [xmin=0, xmax=1, ymin=0, ymax=1]
        {figures/colorbar_24.png};
\end{axis}

\end{tikzpicture}

\caption{Denoising reconstructions and $L^2$-errors of multiple regularization approaches for an observation with a noise level of $\kappa=0.2$ applied on the ground truth visualized in a space-time plot.}
\label{fig:2D_model_denoising}
\end{figure}

\begin{table}[htbp]
    \centering
    \begin{footnotesize}
    \begin{tabular}{|c|c|c|c|c|}
    \hline
     \diagbox[width=5em]{$\kappa$}{Reg.} & \textbf{TIK} & \textbf{TV} & \textbf{CMFoE} & \textbf{MFoE} \\
        \hline
          \multirow{2}*{$0.05$}&  $3.85$ & $2.16$ & $1.72$ & $\mathbf{1.59}$ \\
          \cline{2-5}
          & $\lambda_\gamma,\lambda_t=0.595,1.189$ & $\lambda_\gamma,\lambda_t=0.088,0.044$ & $\lambda_\theta,\kappa= 6.97,0.02$ & $\lambda_\theta,\kappa=7.23,0.05$\rule{0pt}{1em}\\
        \hline 
          \multirow{2}*{$0.1$}&  $5.4$ & $3.15$ & $2.6$ & $\mathbf{2.19}$ \\
        \cline{2-5}
          & $\lambda_\gamma,\lambda_t=1.091,1.834$ & $\lambda_\gamma,\lambda_t=0.177,0.088$ & $\lambda_\theta,\kappa= 6.97,0.1$ & $\lambda_\theta,\kappa=7.23,0.1$\rule{0pt}{1em}\\
        \hline 
          \multirow{2}*{$0.2$}&  $7.32$ & $4.74$ & $3.84$ & $\mathbf{3.23}$ \\
        \cline{2-5}
        & $\lambda_\gamma,\lambda_t=1.834,2.828$ & $\lambda_\gamma,\lambda_t=0.354,0.177$ & $\lambda_\theta,\kappa= 6.97,0.2$ & $\lambda_\theta,\kappa=7.23,0.2$\rule{0pt}{1em}\\
        \hline 
    \end{tabular}
    \end{footnotesize}
    \caption{Mean $L^2$-errors and corresponding regularization parameters for denoising reconstructions of different noise levels with standard deviation $\kappa\in(0.05,0.1,0.2)$.}
    \label{tab:2D_table_denoising}
\end{table}

\subsection{Inverse Problem}
We next consider the inverse problem in electrocardiographic imaging, again comparing $\mathbf{TIK}$ regularization, $\mathbf{TV}$ regularization, and the proposed spatiotemporal $\mathbf{CMFoE}$ and $\mathbf{MFoE}$ approaches as illustrated in~\cref{fig:2D_model_inverse} and evaluated for different noise levels measured in signal-to-noise ratio (SNR) of the observations in~\cref{tab:2D_table_inverse}.
In contrast to the pure denoising setting, the inverse problem is severely ill-posed, and the reconstructions require substantially longer computation times due to the repeated solution of forward and adjoint problems within the optimization procedure.
The ill-posedness of the inverse problem is clearly reflected in the reconstruction quality. 
$\mathbf{TV}$ regularization improves edge preservation and produces sharper spatial features, but may introduce artificial discontinuities, especially in regions with low signal-to-noise ratio.
The $\mathbf{MFoE}$  regularizer again provides the best overall performance. 
Despite the increased difficulty of the inverse problem, it is able to capture relevant spatiotemporal activation patterns more accurately and to suppress noise and inversion artifacts more effectively than the classical approaches. 
In particular, the learned temporal interactions help stabilize the reconstruction and reduce non-physical oscillations over time.
Importantly, although the inverse problem is more challenging than in the denoising case, the ranking of the regularizers in terms of performance remains the same.

\begin{figure}[ht]
\centering
\begin{tikzpicture}
\pgfplotsset{every axis/.append style={tick label style={font=\footnotesize}}}

% === First axis: GT ===
\begin{axis}[
    at={(0cm,0)},
    anchor=origin,
    width=3.7cm,
    height=5.5cm,
    enlargelimits=false,
    xtick={2,4,6},
    ytick={0,50,100,150},
    ytick align=outside,
    ytick pos=left,
    xtick align=outside,
    xtick pos=bottom,
    clip=false
]
    \addplot graphics [xmin=0, xmax=6.2831853, ymin=0, ymax=145]{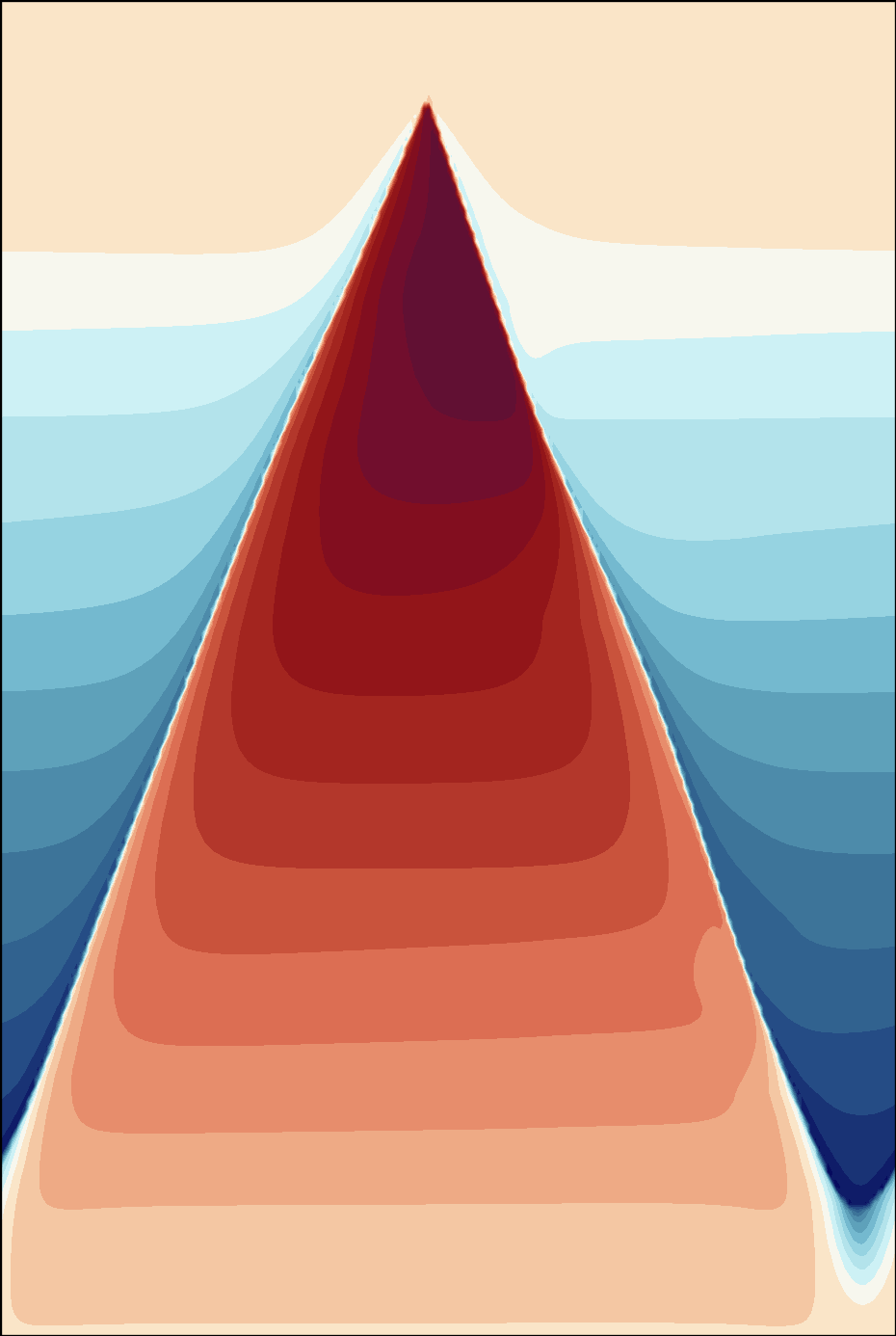};
    \node[anchor=base,font=\small] at (rel axis cs:0.5,1.15) {\textbf{GT}};
    \node[anchor=base,font=\footnotesize] at (rel axis cs:0.5,1.05) {$L^2$-error:};
    \node[anchor=base,font=\footnotesize] at (rel axis cs:0.5,-0.2) {Angle [rad]};
\end{axis}

% === Second axis: TIK ===
\begin{axis}[
    at={(2.15cm,0)},
    anchor=origin,
    width=3.7cm,
    height=5.5cm,
    enlargelimits=false,
    xtick={2,4,6},
    ytick=\empty,
    xtick align=outside,
    xtick pos=bottom,
    axis y line=none,
    clip=false
]
    \addplot graphics [xmin=0, xmax=6.2831853, ymin=0, ymax=145]{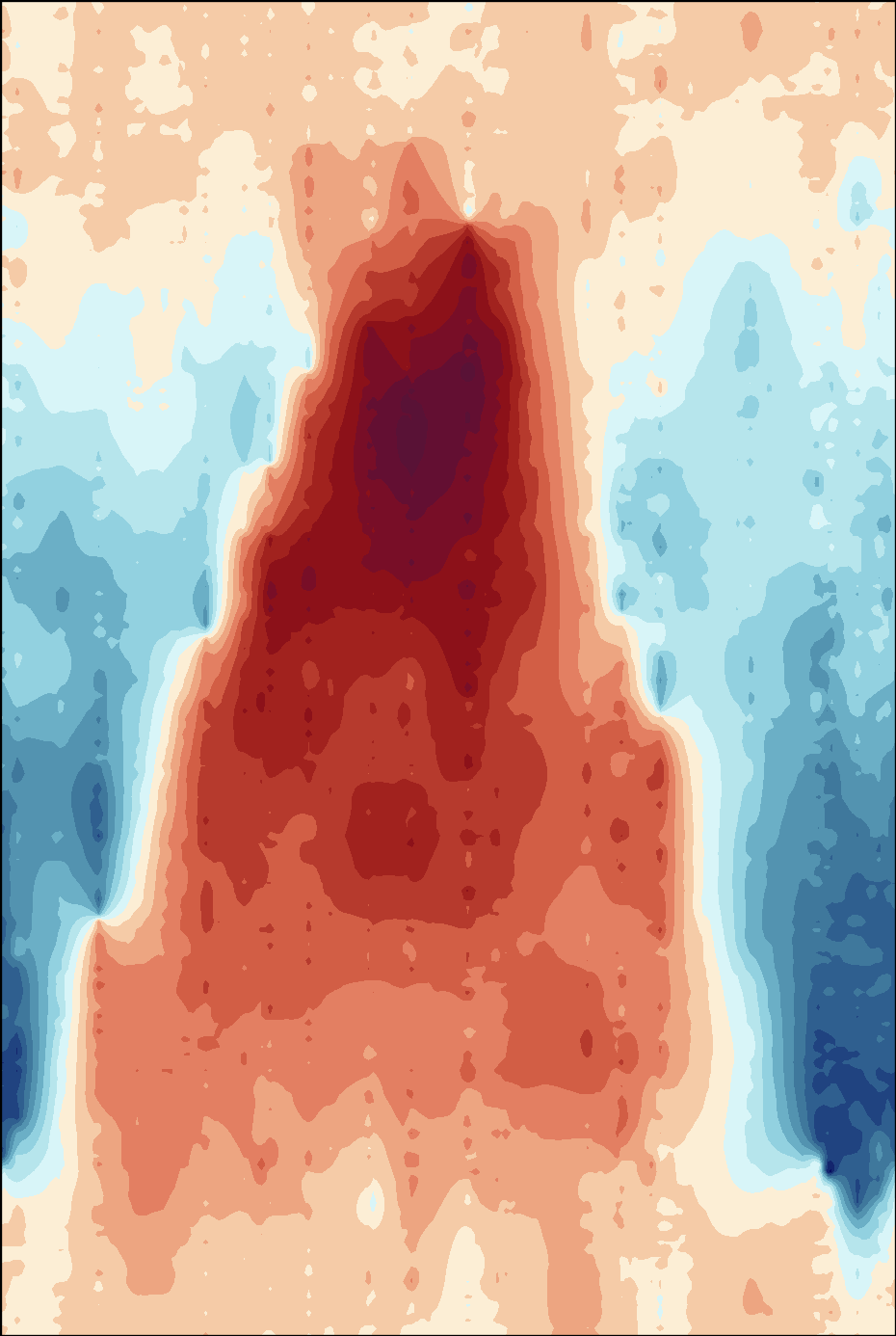};
    \node[anchor=base,font=\small] at (rel axis cs:0.5,1.15) {\textbf{TIK}};
    \node[anchor=base,font=\footnotesize] at (rel axis cs:0.5,1.05) {12.6};
    \node[anchor=base,font=\footnotesize] at (rel axis cs:0.5,-0.2) {Angle [rad]};
\end{axis}

% === Third axis: TV ===
\begin{axis}[
    at={(4.3cm,0)},
    anchor=origin,
    width=3.7cm,
    height=5.5cm,
    enlargelimits=false,
    xtick={2,4,6},
    ytick=\empty,
    xtick align=outside,
    xtick pos=bottom,
    axis y line=none,
    clip=false
]
    \addplot graphics [xmin=0, xmax=6.2831853, ymin=0, ymax=145]{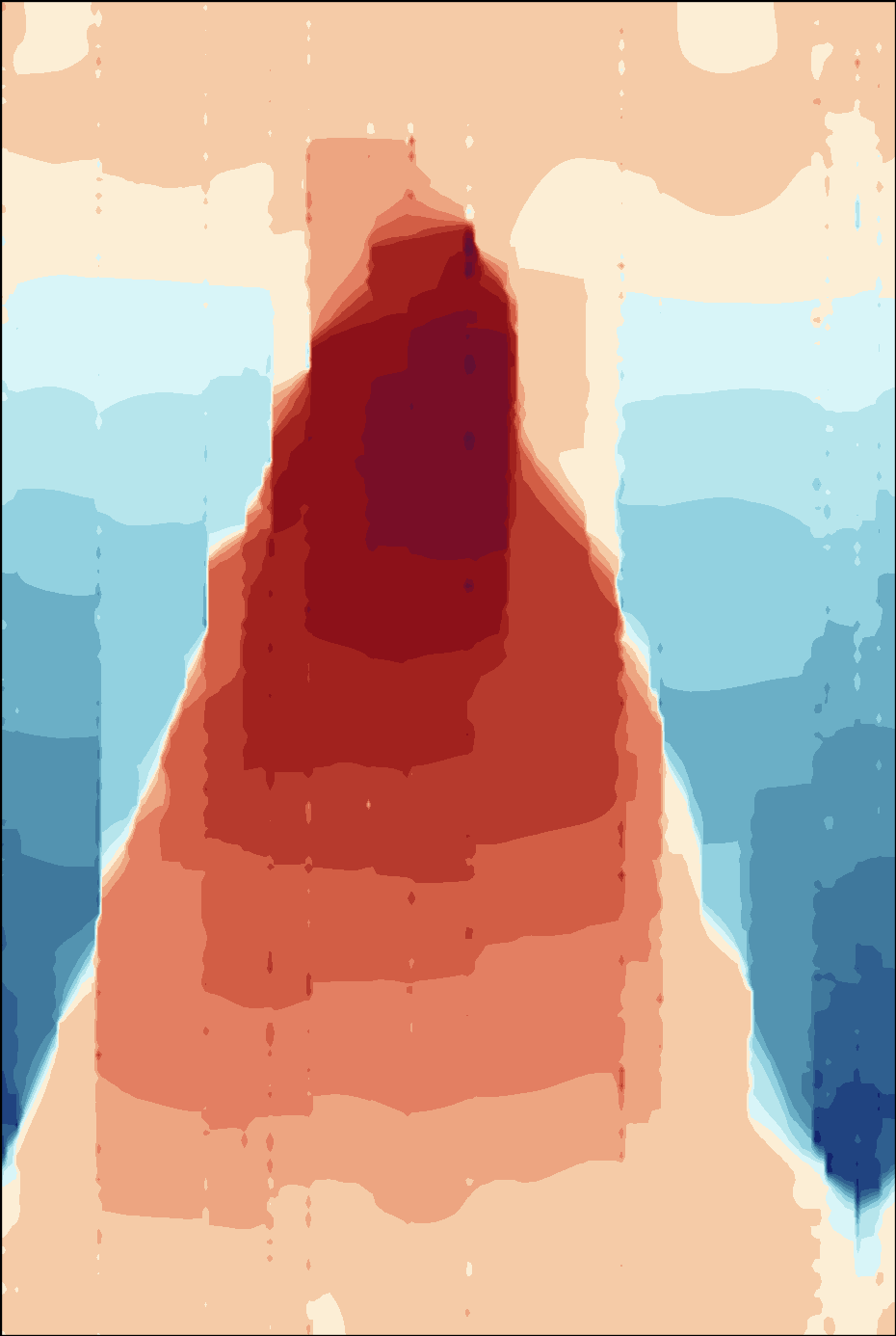};
    \node[anchor=base,font=\small] at (rel axis cs:0.5,1.15) {\textbf{TV}};
    \node[anchor=base,font=\footnotesize] at (rel axis cs:0.5,1.05) {10.34};
    \node[anchor=base,font=\footnotesize] at (rel axis cs:0.5,-0.2) {Angle [rad]};
\end{axis}

% === Fourth axis: CMFoE ===
\begin{axis}[
    at={(6.45cm,0)},
    anchor=origin,
    width=3.7cm,
    height=5.5cm,
    enlargelimits=false,
    xtick={2,4,6},
    ytick=\empty,
    xtick align=outside,
    xtick pos=bottom,
    axis y line=none,
    clip=false
]
    \addplot graphics [xmin=0, xmax=6.2831853, ymin=0, ymax=145]{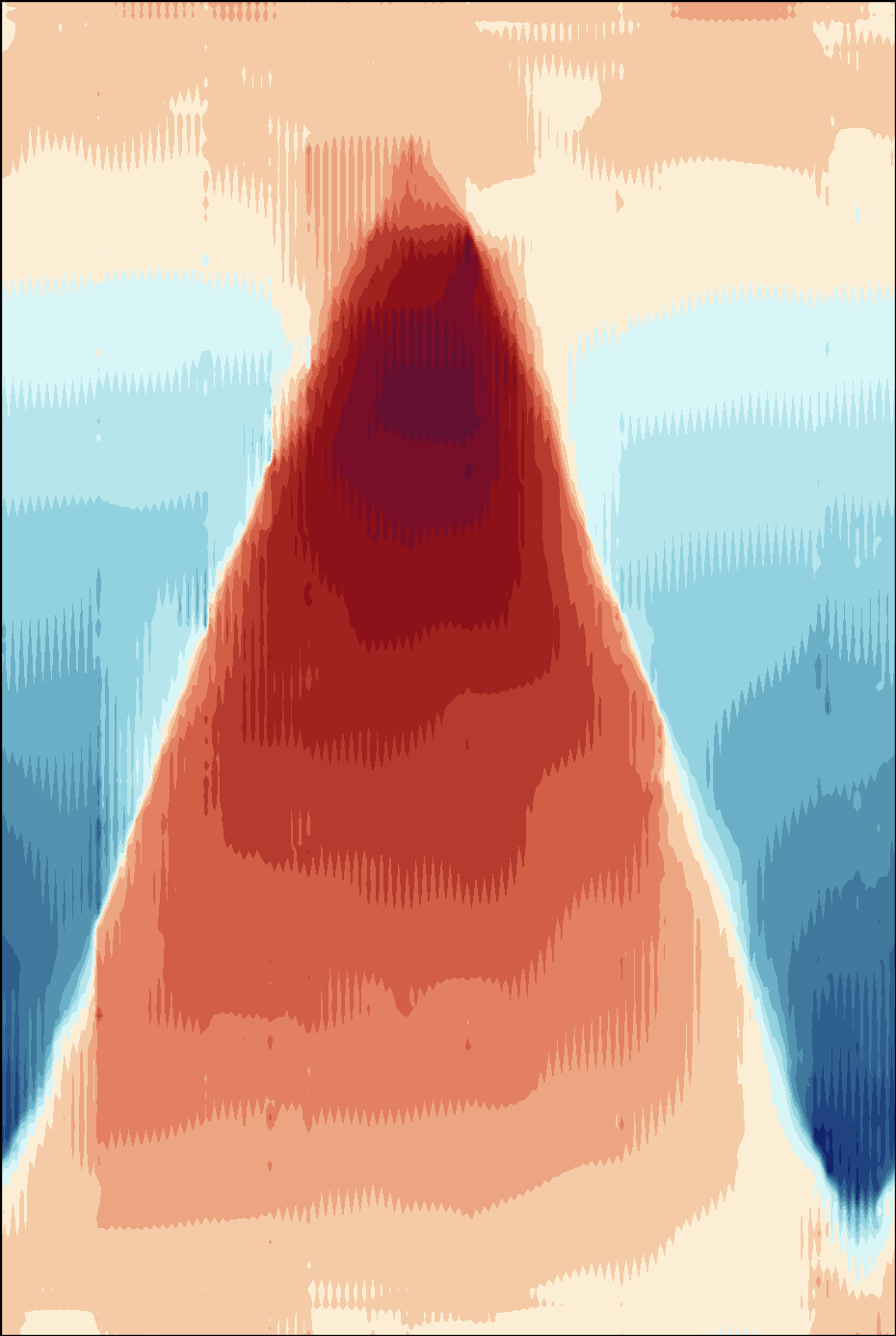};
    \node[anchor=base,font=\small] at (rel axis cs:0.5,1.15) {\textbf{CMFoE}};
    \node[anchor=base,font=\footnotesize] at (rel axis cs:0.5,1.05) {9.14};
    \node[anchor=base,font=\footnotesize] at (rel axis cs:0.5,-0.2) {Angle [rad]};
\end{axis}

% === Fifth axis: MFoE ===
\begin{axis}[
    at={(8.6cm,0)},
    anchor=origin,
    width=3.7cm,
    height=5.5cm,
    enlargelimits=false,
    xtick={2,4,6},
    ytick=\empty,
    xtick align=outside,
    xtick pos=bottom,
    axis y line=none,
    clip=false
]
    \addplot graphics [xmin=0, xmax=6.2831853, ymin=0, ymax=145]{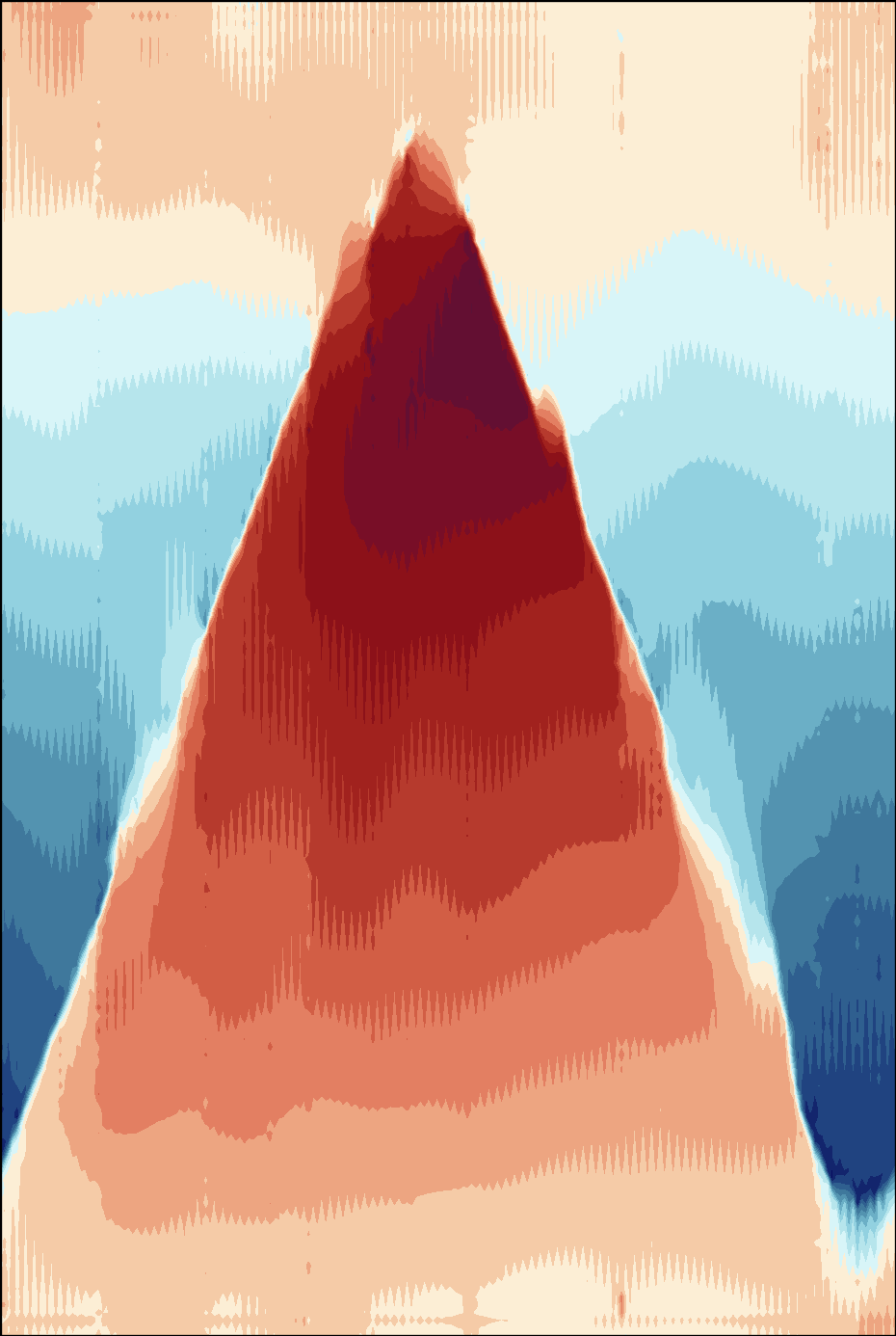};
    \node[anchor=base,font=\small] at (rel axis cs:0.5,1.15) {\textbf{MFoE}};
    \node[anchor=base,font=\footnotesize] at (rel axis cs:0.5,1.05) {\textbf{8.32}};
    \node[anchor=base,font=\footnotesize] at (rel axis cs:0.5,-0.2) {Angle [rad]};
\end{axis}

% === Y-axis label ===
\node[rotate=90, anchor=south, font=\footnotesize] at (-0.5cm,1.8cm) {Time $t$ [ms]};

% === Right y-axis label ===
\node[rotate=90, anchor=south, font=\scriptsize] at (12cm,2cm) {Extracellular potential [mV]};

% === Colorbar ===
\begin{axis}[
    at={(10.75cm,0)}, 
    anchor=origin,
    width=1.85cm,
    height=5.5cm,
    axis x line=none,
    ytick pos=right,
    ytick={0,0.2,0.4,0.6,0.8,1},
    enlargelimits=false
]
    \addplot graphics [xmin=0, xmax=1, ymin=0, ymax=1]
        {figures/colorbar_24.png};
\end{axis}

\end{tikzpicture}

\caption{Inverse problem reconstructions and $L^2$-errors of multiple regularization approaches with SNR of $30$ dB by applying Gaussian noise on the observations, visualized in a space-time plot.}
\label{fig:2D_model_inverse}
\end{figure}

\begin{table}[htbp]
    \centering
    \begin{footnotesize}
    \begin{tabular}{|c|c|c|c|c|}
    \hline
     \diagbox[width=5em]{SNR}{Reg.} & \textbf{TIK} & \textbf{TV} & \textbf{CMFoE} & \textbf{MFoE} \\
        \hline
        \multirow{3}{*}{$50$}
        & $12.82$ & $9.19$ & $8.09$ & $\mathbf{7.59}$ \\
        \cline{2-5}
        & $\lambda_\gamma = 2.97 \times 10^{-3}$ 
        & $\lambda_\gamma = 4.59 \times 10^{-6}$ 
        & $\lambda_\theta = 6.97 \times 10^{-3}$ 
        & $\lambda_\theta = 9.04 \times 10^{-4}$ \rule{0pt}{1em}\\
        & $\lambda_t = 2.97 \times 10^{-3}$ 
        & $\lambda_t = 9.64 \times 10^{-7}$ 
        & $\kappa = 8.84 \times 10^{-2}$ 
        & $\kappa = 4.2 \times 10^{-1}$ \\
        \hline 
        \multirow{3}{*}{$40$}
        & $13.01$ & $9.98$ & $9.02$ & $\mathbf{8.11}$ \\
        \cline{2-5}
        & $\lambda_\gamma = 8.41 \times 10^{-3}$ 
        & $\lambda_\gamma = 2.83 \times 10^{-5}$ 
        & $\lambda_\theta = 3.49 \times 10^{-2}$ 
        & $\lambda_\theta = 5.11 \times 10^{-3}$ \rule{0pt}{1em}\\
        & $\lambda_t = 8.41 \times 10^{-3}$ 
        & $\lambda_t = 5.95 \times 10^{-6}$ 
        & $\kappa = 8.84 \times 10^{-2}$ 
        & $\kappa = 4.2 \times 10^{-1}$ \\
        \hline 
        \multirow{3}{*}{$30$}
        & $13.54$ & $11.72$ & $10.74$ & $\mathbf{10.48}$ \\
        \cline{2-5}
        & $\lambda_\gamma = 2.38 \times 10^{-2}$ 
        & $\lambda_\gamma = 2 \times 10^{-4}$ 
        & $\lambda_\theta = 8.29 \times 10^{-1}$ 
        & $\lambda_\theta = 1.81 \times 10^{-2}$ \rule{0pt}{1em}\\
        & $\lambda_t = 2.38 \times 10^{-2}$ 
        & $\lambda_t = 4.59 \times 10^{-5}$ 
        & $\kappa = 2.87 \times 10^{-2}$ 
        & $\kappa = 5 \times 10^{-1}$ \\
        \hline 
    \end{tabular}
    \end{footnotesize}
    \caption{Mean $L^2$-errors and corresponding regularization parameters for inverse problem reconstructions at SNR $(30,40,50)$ dB.}
    \label{tab:2D_table_inverse}
\end{table}

\section{Conclusion}
We introduced a class of trained spatiotemporal regularizers for inverse problems on unstructured meshes and analyzed them within a rigorous variational framework. 
We proved theorectical Mosco-convergence of the discrete functionals to their continuous limits, which implies convergence of minimizers and ensures stability under mesh refinement. 
Numerical experiments on denoising and the inverse problem show that the proposed spatiotemporal regularizer consistently outperforms classical handcrafted approaches.
Although the inverse problem is severely ill-posed and reconstructions require longer computation times, the relative performance of the regularizers in contrast to each other remains unchanged.
The current implementation is limited to uniform timesteps, which restricts its applicability in problems requiring adaptive or variable temporal resolution. 
Extending the method to variable timesteps would increase computational complexity.
Similarly, extending the model to learn kernel convolutions on spatial manifolds, analogous to temporal convolutions, is computationally demanding, thereby limiting the practicality of spatial regularization.
Overall, this work shows that data-driven regularization can be combined with strong analytical guarantees, yielding methods that are both theoretically sound and practically superior for challenging inverse problems such as ECGI.


\begin{thebibliography}{10}

\bibitem{Be07}
{\sc F.~B. Belgacem}, {\em Why is the {C}auchy problem severely ill-posed?},
  Inverse Problems, 23 (2007), pp.~823--836,
  \url{https://doi.org/10.1088/0266-5611/23/2/020},
  \url{https://doi.org/10.1088/0266-5611/23/2/020}.

\bibitem{Bi11}
{\sc M.~J. Bishop and G.~Plank}, {\em Bidomain {ECG} {Simulations} {Using} an
  {Augmented} {Monodomain} {Model} for the {Cardiac} {Source}}, IEEE
  transactions on bio-medical engineering, 58 (2011),
  p.~10.1109/TBME.2011.2148718,
  \url{https://doi.org/10.1109/TBME.2011.2148718},
  \url{https://www.ncbi.nlm.nih.gov/pmc/articles/PMC3378475/} (accessed
  2022-07-19).

\bibitem{Br08}
{\sc S.~C. Brenner and L.~R. Scott}, {\em The mathematical theory of finite
  element methods}, vol.~15 of Texts in Applied Mathematics, Springer, New
  York, third~ed., 2008, \url{https://doi.org/10.1007/978-0-387-75934-0},
  \url{https://doi.org/10.1007/978-0-387-75934-0}.

\bibitem{Ci78}
{\sc P.~G. Ciarlet}, {\em The finite element method for elliptic problems},
  vol.~Vol. 4 of Studies in Mathematics and its Applications, North-Holland
  Publishing Co., Amsterdam-New York-Oxford, 1978.

\bibitem{Cl18}
{\sc M.~Cluitmans, D.~H. Brooks, R.~MacLeod, O.~Dössel, M.~S. Guillem, P.~M.
  van Dam, J.~Svehlikova, B.~He, J.~Sapp, L.~Wang, and L.~Bear}, {\em
  Validation and opportunities of electrocardiographic imaging: From technical
  achievements to clinical applications}, Frontiers in Physiology, 9 (2018),
  \url{https://doi.org/10.3389/fphys.2018.01305},
  \url{https://www.frontiersin.org/articles/10.3389/fphys.2018.01305}.

\bibitem{Fr14}
{\sc P.~Colli~Franzone, L.~F. Pavarino, and S.~Scacchi}, {\em Mathematical
  cardiac electrophysiology}, vol.~13 of MS\&A. Modeling, Simulation and
  Applications, Springer, Cham, 2014,
  \url{https://doi.org/10.1007/978-3-319-04801-7},
  \url{https://doi.org/10.1007/978-3-319-04801-7}.

\bibitem{Du25}
{\sc S.~Ducotterd and M.~Unser}, {\em Multivariate fields of experts}, 2025,
  \url{https://arxiv.org/abs/2508.06490},
  \url{https://arxiv.org/abs/2508.06490}.

\bibitem{Ef20}
{\sc A.~Effland, S.~Neumayer, and M.~Rumpf}, {\em Convergence of the time
  discrete metamorphosis model on {H}adamard manifolds}, SIAM J. Imaging Sci.,
  13 (2020), pp.~557--588, \url{https://doi.org/10.1137/19M1247073},
  \url{https://doi.org/10.1137/19M1247073}.

\bibitem{El13}
{\sc C.~M. Elliott and T.~Ranner}, {\em Finite element analysis for a coupled
  bulk-surface partial differential equation}, IMA J. Numer. Anal., 33 (2013),
  pp.~377--402, \url{https://doi.org/10.1093/imanum/drs022},
  \url{https://doi.org/10.1093/imanum/drs022}.

\bibitem{Ev10}
{\sc L.~C. Evans}, {\em Partial differential equations}, vol.~19 of Graduate
  Studies in Mathematics, American Mathematical Society, Providence, RI, 1998,
  \url{https://doi.org/10.1090/gsm/019}, \url{https://doi.org/10.1090/gsm/019}.

\bibitem{Ga21}
{\sc L.~Gander, R.~Krause, M.~Multerer, and S.~Pezzuto}, {\em Space-time shape
  uncertainty in the forward and inverse problem of electrocardiography},
  International Journal for Numerical Methods in Biomedical Engineering, 37
  (2021), \url{https://doi.org/10.1002/cnm.3522},
  \url{https://arxiv.org/abs/2010.16104}.

\bibitem{Ge23}
{\sc Z.~Geng and J.~Z. Kolter}, {\em Torchdeq: A library for deep equilibrium
  models}.
\newblock \url{https://github.com/locuslab/torchdeq}, 2023.

\bibitem{Go24}
{\sc A.~Goujon, S.~Neumayer, and M.~Unser}, {\em Learning weakly convex
  regularizers for convergent image-reconstruction algorithms}, SIAM J. Imaging
  Sci., 17 (2024), pp.~91--115, \url{https://doi.org/10.1137/23M1565243},
  \url{https://doi.org/10.1137/23M1565243}.

\bibitem{Gr24}
{\sc T.~Grandits, J.~Verh\"ulsdonk, G.~Haase, A.~Effland, and S.~Pezzuto}, {\em
  Digital twinning of cardiac electrophysiology models from the surface {ECG}:
  a geodesic backpropagation approach}, IEEE Transactions on Biomedical
  Engineering, 71 (2024), pp.~1281--1288,
  \url{https://doi.org/10.1109/TBME.2023.3331876},
  \url{https://arxiv.org/abs/2308.08410}.

\bibitem{Ha25}
{\sc M.~Haas, T.~Grandits, T.~Pinetz, T.~Beiert, S.~Pezzuto, and A.~Effland},
  {\em Finite {E}lement-{B}ased {S}pace-{T}ime {T}otal-{V}ariation-{T}ype
  {R}egularization of the {I}nverse {P}roblem in {E}lectrocardiographic
  {I}maging}, SIAM J. Sci. Comput., 47 (2025), pp.~B1317--B1342,
  \url{https://doi.org/10.1137/24M1685055},
  \url{https://doi.org/10.1137/24M1685055}.

\bibitem{He96}
{\sc E.~Hebey}, {\em Sobolev spaces on {R}iemannian manifolds}, vol.~1635 of
  Lecture Notes in Mathematics, Springer-Verlag, Berlin, 1996,
  \url{https://doi.org/10.1007/BFb0092907},
  \url{https://doi.org/10.1007/BFb0092907}.

\bibitem{Ka18}
{\sc A.~Karoui, L.~Bear, P.~Migerditichan, and N.~Zemzemi}, {\em Evaluation of
  fifteen algorithms for the resolution of the electrocardiography imaging
  inverse problem using ex-vivo and in-silico data}, Frontiers in Physiology, 9
  (2018), \url{https://doi.org/10.3389/fphys.2018.01708}.

\bibitem{Ko22}
{\sc E.~Kobler, A.~Effland, K.~Kunisch, and T.~Pock}, {\em Total deep
  variation: A stable regularization method for inverse problems}, IEEE
  Transactions on Pattern Analysis and Machine Intelligence, 44 (2022),
  pp.~9163--9180, \url{https://doi.org/10.1109/TPAMI.2021.3124086}.

\bibitem{Mo65}
{\sc J.~Moreau}, {\em Proximité et dualité dans un espace hilbertien},
  Bulletin de la Société Mathématique de France, 93 (1965), pp.~273--299,
  \url{http://eudml.org/doc/87067}.

\bibitem{Mo69}
{\sc U.~Mosco}, {\em Convergence of convex sets and of solutions of variational
  inequalities}, Advances in Math., 3 (1969), pp.~510--585,
  \url{https://doi.org/10.1016/0001-8708(69)90009-7},
  \url{https://doi.org/10.1016/0001-8708(69)90009-7}.

\bibitem{Na24}
{\sc G.~Nardi, B.~Charlier, and A.~Trouv\'e}, {\em The matching problem between
  functional shapes via a {$BV$} penalty term: a {$\Gamma$}-convergence
  result}, Interfaces Free Bound., 26 (2024), pp.~381--414,
  \url{https://doi.org/10.4171/ifb/517}, \url{https://doi.org/10.4171/ifb/517}.

\bibitem{On09}
{\sc M.~Onal and Y.~Serinagaoglu}, {\em Spatio-temporal solutions in inverse
  electrocardiography}, in 4th European Conference of the International
  Federation for Medical and Biological Engineering, J.~Vander~Sloten,
  P.~Verdonck, M.~Nyssen, and J.~Haueisen, eds., Berlin, Heidelberg, 2009,
  Springer Berlin Heidelberg, pp.~180--183.

\bibitem{Pa19}
{\sc A.~Paszke, S.~Gross, F.~Massa, A.~Lerer, J.~Bradbury, G.~Chanan,
  T.~Killeen, Z.~Lin, N.~Gimelshein, L.~Antiga, A.~Desmaison, A.~K\"{o}pf,
  E.~Yang, Z.~DeVito, M.~Raison, A.~Tejani, S.~Chilamkurthy, B.~Steiner,
  L.~Fang, J.~Bai, and S.~Chintala}, {\em PyTorch: an imperative style,
  high-performance deep learning library}, Curran Associates Inc., Red Hook,
  NY, USA, 2019.

\bibitem{Pe22}
{\sc S.~Pezzuto, P.~Perdikaris, and F.~Sahli~Costabal}, {\em Learning cardiac
  activation maps from 12-lead {ECG} with multi-fidelity {B}ayesian
  optimization on manifolds}, IFAC-PapersOnLine, 55 (2022), pp.~175--180,
  \url{https://doi.org/10.1016/j.ifacol.2022.09.091},
  \url{https://arxiv.org/abs/2203.06222}.

\bibitem{Po06}
{\sc M.~Potse, B.~Dube, J.~Richer, A.~Vinet, and R.~M. Gulrajani}, {\em A
  comparison of monodomain and bidomain reaction-diffusion models for action
  potential propagation in the human heart}, IEEE Transactions on Biomedical
  Engineering, 53 (2006), pp.~2425--2435,
  \url{https://doi.org/10.1109/TBME.2006.880875}.

\bibitem{Ro02}
{\sc B.~J. Roth}, {\em Electrical conductivity values used with the bidomain
  model of cardiac tissue}, IEEE Transactions on Biomedical Engineering, 44
  (2002), pp.~326--328.

\bibitem{Ro05}
{\sc S.~Roth and M.~J. Black}, {\em Fields of experts: a framework for learning
  image priors}, 2005 IEEE Computer Society Conference on Computer Vision and
  Pattern Recognition (CVPR'05), 2 (2005), pp.~860--867 vol. 2,
  \url{https://api.semanticscholar.org/CorpusID:2843211}.

\bibitem{Ru92}
{\sc L.~I. Rudin, S.~Osher, and E.~Fatemi}, {\em Nonlinear total variation
  based noise removal algorithms}, Physica D: Nonlinear Phenomena, 60 (1992),
  pp.~259--268,
  \url{https://doi.org/https://doi.org/10.1016/0167-2789(92)90242-F},
  \url{https://www.sciencedirect.com/science/article/pii/016727899290242F}.

\bibitem{Se05}
{\sc M.~Seger, G.~Fischer, R.~Modre-Osprian, B.~Messnarz, F.~Hanser, and
  B.~Tilg}, {\em Lead field computation for the electrocardiographic inverse
  problem - finite elements versus boundary elements}, Computer methods and
  programs in biomedicine, 77 (2005), pp.~241--52,
  \url{https://doi.org/10.1016/j.cmpb.2004.10.005}.

\bibitem{Te22}
{\sc R.~Tenderini, S.~Pagani, A.~Quarteroni, and S.~Deparis}, {\em P{DE}-aware
  deep learning for inverse problems in cardiac electrophysiology}, SIAM J.
  Sci. Comput., 44 (2022), pp.~B605--B639,
  \url{https://doi.org/10.1137/21M1438529},
  \url{https://doi.org/10.1137/21M1438529}.

\bibitem{Ti77}
{\sc A.~N. Tikhonov and V.~Y. Arsenin}, {\em Solutions of ill-posed problems},
  Scripta Series in Mathematics, V. H. Winston \& Sons, Washington, DC; John
  Wiley \& Sons, New York-Toronto-London, 1977.
\newblock Translated from the Russian, Preface by translation editor Fritz
  John.

\bibitem{Wa10}
{\sc D.~Wang, R.~M. Kirby, and C.~R. Johnson}, {\em Resolution strategies for
  the finite-element-based solution of the ecg inverse problem}, IEEE
  Transactions on Biomedical Engineering, 57 (2010), pp.~220--237,
  \url{https://doi.org/10.1109/TBME.2009.2024928}.

\end{thebibliography}
\end{document}